\documentclass[12pt,reqno]{amsart}
\usepackage[margin=1in]{geometry}
\usepackage{amsmath,amssymb,amsthm,graphicx,amsxtra, setspace}
\usepackage[utf8]{inputenc}
\usepackage{mathrsfs}
\usepackage{hyperref}
\usepackage{upgreek}
\usepackage{mathtools}
\usepackage{xcolor}
\usepackage[mathcal]{euscript}
\allowdisplaybreaks

\usepackage{subcaption} 

\usepackage[pagewise]{lineno}

\DeclareMathAlphabet{\mathpzc}{OT1}{pzc}{m}{it}

\usepackage[cyr]{aeguill}

\colorlet{darkblue}{blue!50!black}

\hypersetup{
	colorlinks,%
	citecolor=blue,%
	filecolor=red,%
	linkcolor=red,%
	urlcolor=blue,%
	pdfnewwindow=true,%
	pdfstartview={FitH}
}

\newtheorem{theorem}{Theorem}[section]
\newtheorem{lemma}[theorem]{Lemma}
\newtheorem{proposition}[theorem]{Proposition}

\newtheorem{corollary}[theorem]{Corollary}
\newtheorem{definition}[theorem]{Definition}
\newtheorem{problem}[theorem]{Problem}

\newtheorem{remark}[theorem]{Remark}
\newtheorem{algorithm}[theorem]{Algorithm}

\newtheorem{hypothesis}[theorem]{Hypothesis}

\allowdisplaybreaks

\let\originalleft\left
\let\originalright\right
\renewcommand{\left}{\mathopen{}\mathclose\bgroup\originalleft}
\renewcommand{\right}{\aftergroup\egroup\originalright}


\renewcommand{\d}{\/\mathrm{d}\/}

\def\L{\mathbb{L}}

\def\I{\mathrm{I}}

\def\f{\boldsymbol{f}}

\def\y{\boldsymbol{y}}

\def\X{\mathbb{X}}
\def\x{\boldsymbol{x}}

\def\g{\boldsymbol{g}}

\def\z{\boldsymbol{z}}
\def\bv{\boldsymbol{v}}
\def\V{\mathcal{v}}
\def\bw{\boldsymbol{w}}

\def\N{\mathbb{N}}

\def\V{\mathcal{V}}

\def\bu{\boldsymbol{u}}
\def\H{\mathbb{H}}
\def\n{\boldsymbol{n}}

\newcommand{\R}{\mathbb{R}}

\renewcommand{\d}{\/\mathrm{d}\/}


\newcommand{\Addresses}{{
		\footnote{
			
			\noindent \textsuperscript{1,2}Department of Mathematics, Indian Institute of Technology Roorkee-IIT Roorkee,
			Haridwar Highway, Roorkee, Uttarakhand 247667, INDIA.\par\nopagebreak
			\noindent  \textit{e-mail:} \texttt{Manil T. Mohan: maniltmohan@ma.iitr.ac.in, maniltmohan@gmail.com.}
			
			\textit{e-mail:} \texttt{Wasim Akram: wakram2k11@gmail.com.}
			
			
			\noindent \textsuperscript{*}Corresponding author.
			
			\textit{Key words:} Stationary Convective Brinkman-Forchheimer Extended Darcy equations, Optimal Control, Hemivariational Inequality, finite element discretization, convergence analysis. 
			
			Mathematics Subject Classification (2020): Primary: 65K10; Secondary: 35Q35, 49J20, 76D03.

}}}

\begin{document}
	
	\title[Optimal control of hemivariational inequality for 2D and 3D CBFeD equations]{Optimal Control  of a Hemivariational Inequality of Stationary Convective Brinkman-Forchheimer Extended Darcy equations with Numerical Approximation 
		\Addresses}
	\author[W. Akram and M. T. Mohan]
	{Wasim Akram\textsuperscript{2} and Manil T. Mohan\textsuperscript{1*}}
	
	\maketitle

	\begin{abstract}
This paper studies an optimal control problem for a stationary convective Brinkman-Forchheimer extended Darcy (CBFeD) hemivariational inequality in two and three dimensions, subject to control constraints, and develops its numerical approximation. The hemivariational inequality provides the weak formulation of a stationary incompressible fluid flow through a porous medium, governed by the CBFeD equations, which account for convection, damping, and nonlinear resistance effects. The problem incorporates a non-leak boundary condition and a subdifferential friction-type condition. We first analyze the stability of solutions with respect to perturbations in the external force density and the superpotential. Next, we prove the existence of a solution to the optimal control problem, where the external force density acts as the control variable. We then propose a numerical scheme for solving the optimal control problem and establish its convergence. For concreteness, the numerical method is implemented using finite element discretization. Finally, we provide some numerical examples to validate the theory developed. 
	\end{abstract}

	\section{Introduction}\label{sec1}\setcounter{equation}{0}
The study of optimal control problems governed by hemivariational inequalities (HVIs) associated with partial differential equations, particularly the Stokes and Navier-Stokes systems, has become increasingly important due to its capacity to model complex fluid behavior characterized by nonsmooth and nonmonotone dynamics. Such problems naturally arise in areas like fluid mechanics, contact problems, and processes involving multivalued or nonmonotone constitutive laws. In the context of the Stokes and Navier-Stokes equations, HVIs are often used to describe phenomena such as slip conditions at boundaries or irregular frictional interactions. These models, when integrated with fluid dynamics, result in highly nonlinear and mathematically demanding systems. Despite their complexity, they provide a robust framework for addressing optimal control scenarios in which the goal is to determine an appropriate control input, such as an applied force or boundary condition, that minimizes a predefined cost functional. This optimization must be achieved while simultaneously satisfying the HVI-constrained fluid equations, where the state variables (typically velocity and pressure) are subject to nonsmooth and possibly multivalued relations. Core analytical challenges include establishing the existence of solutions, deriving optimality conditions under nonsmooth settings, and designing numerical algorithms that are both accurate and efficient. The practical relevance of such problems is evident in engineering contexts such as lubricated surface flows, interactions between fluids and deformable structures, and transport through porous media with irregular or reactive boundaries.

\subsection{Literature review}
The optimal control of stationary Stokes hemivariational inequalities and their numerical approximations has been extensively studied in the literature; see, for instance, \cite{XCRGWH,CFWH1,CFWH2}. Building on these developments, the authors in \cite{WWXCWH} extended the analysis to the stationary Navier-Stokes setting, focusing on both the control problem and its numerical approximation. Their model addresses the steady flow of an incompressible, viscous fluid, incorporating a nonleak boundary condition along with a friction-type constraint expressed through a subdifferential formulation. Further progress was made in \cite{OCRNM}, where the optimal control of nonstationary Navier-Stokes equations was studied under nonlinear boundary conditions described by the Clarke subdifferential, with the objective of minimizing a general cost functional dependent on the control through the system’s state.

Hemivariational inequalities, first introduced by Panagiotopoulos (\cite{PDP}), offer a powerful mathematical framework for modeling systems characterized by nonsmooth and nonmonotone interactions, which frequently arise in mechanics and engineering applications. They are particularly useful in contact mechanics with friction, where traditional variational methods fail to capture nonconvex behavior, as well as in fluid dynamics, material science, and phase transition problems. These inequalities also arise in optimal control, structural optimization, and systems with hysteresis or unilateral constraints, making them a versatile tool in both theoretical analysis and engineering applications. Over the years, a robust theoretical foundation for these inequalities has been established, as documented in key references such as \cite{ZNPDP}. Due to the inherent complexity and lack of closed-form solutions, hemivariational inequalities are typically addressed through numerical methods. Foundational contributions to numerical techniques and algorithmic approaches can be found in \cite{JHMMPD}, while a comprehensive overview of more recent advancements and theoretical insights into the numerical treatment of hemivariational inequalities is provided in \cite{WHMS}.

In the context of the Navier-Stokes equations, when the boundary conditions involve nonsmooth but monotone relations, the associated weak formulation leads to a variational inequality, a framework that has been explored in several studies, including \cite{AYC,HFu}. However, when the boundary behavior is characterized by nonsmooth and nonmonotone relations, the weak formulation naturally evolves into a hemivariational inequality. The foundational work on the well-posedness of such Navier-Stokes hemivariational inequalities was carried out in \cite{SMi2,SMAO}, where the authors utilized an abstract surjectivity theorem for pseudomonotone operators to establish existence results. The corresponding optimal control problem and the existence of optimal solutions were later introduced in \cite{SMi1}. In contrast to the operator-theoretic approach used in \cite{SMi2,SMAO}, the studies in \cite{WHFJYY,MLWH1} demonstrated well-posedness by relying solely on fundamental tools from functional analysis. 
	
The body of research on hemivariational inequalities (HVIs) associated with the convective Brinkma-Forchheimer (CBF) and its extended Darcy (CBFeD) models remains relatively sparse. The paper \cite{WHHQLM}  investigated a Stokes hemivariational inequality for incompressible fluid flow with damping, established well-posedness via a minimization approach, and developed mixed finite element methods with corresponding error estimates. The work in \cite{JJSGMTM} investigates boundary HVIs for both stationary and time-dependent 2D and 3D CBF equations, incorporating a no-slip condition along with a Clarke subdifferential relation linking pressure and the normal component of velocity. For the stationary case, the existence and uniqueness of weak solutions are established using a surjectivity theorem for pseudomonotone operators. In \cite{WAMTM}, a mixed finite element approach is developed for the stationary CBFeD HVI, with well-posedness proven using techniques analogous to those applied for the CBF case, followed by a comprehensive error analysis of the numerical scheme. The paper \cite{WWXLCWH} establishes well-posedness, proposes a mixed finite element method with optimal error estimates, and validates an efficient iterative algorithm for the stationary Navier-Stokes hemivariational inequality with nonlinear damping and frictional slip boundary conditions.

The control of fluid flow remains a topic of significant interest across scientific and engineering disciplines, driven by its critical role in a wide range of complex technological systems. While optimal control problems governed by the classical Navier-Stokes equations and standard boundary value problems have been extensively explored in the literature (see, e.g., \cite{FART,AAFFEO,ECMMJPR,MDGLH,MDG,OpVf}, etc. and references therein), recent attention has shifted toward more challenging scenarios involving nonsmooth and nonmonotone boundary conditions. This has led to the development of optimal control frameworks based on variational and hemivariational inequalities, as discussed in works such as \cite{CFWH1,SMi,YBXMS,WWXCWH}. In this work \cite{WWXCWH},  focused on the optimal control of the Navier-Stokes hemivariational inequality, where the control variable is the external force density. The analysis included a finite element approximation of the problem and established convergence of the proposed method. 
We build upon the analysis presented in \cite{WWXCWH} by advancing it to the next level, focusing on the optimal control of a hemivariational inequality governed by the stationary CBFeD equations and formulating its numerical approximation.
	
\subsection{Contributions and novelties}
The present paper investigates an optimal control problem governed by a stationary convective Brinkman-Forchheimer extended Darcy (CBFeD) hemivariational inequality (HVI) in both two and three spatial dimensions. The control problem is formulated under the presence of control constraints, and considerable attention is devoted to the development and analysis of a corresponding numerical approximation scheme.
The hemivariational inequality serves as the weak formulation of a steady-state model describing the flow of an incompressible viscous fluid through a porous medium. The governing equations are the CBFeD equations, which generalize Darcy’s law by incorporating additional physical effects such as inertia (convection), viscous damping (Brinkman term), and nonlinear drag or resistance (Forchheimer term). These extended terms allow the model to better capture complex behaviors of fluid motion in porous environments, particularly under moderate to high flow rates.
The boundary conditions for the problem include a non-leak condition, which prevents fluid from escaping through parts of the domain boundary, and a friction-type condition expressed using the Clarke subdifferential, which introduces a nonmonotone, possibly multivalued relation modeling physical phenomena like nonlinear slip or dry friction.
	
As a preliminary step toward analyzing the optimal control problem, we begin by examining the stability of solutions to the CBFeD hemivariational inequality with respect to variations in the external force density and the superpotential. This stability analysis is not only essential for control but is also of independent theoretical interest. We note that related stability results for elliptic HVIs can be found in \cite{WHYL,YBXMS} and for stationary Navier-Stokes HVI, one can refer to  \cite{WWXCWH}. 
Subsequently, the paper establishes the existence of an optimal control that minimizes a given cost functional while ensuring that the state variables (the fluid velocity field and the pressure) satisfies the CBFeD hemivariational inequality. The control variable is taken to be the external force density, allowing for the steering of the fluid flow by adjusting forces acting within the domain.
To solve the problem computationally, the paper introduces a numerical scheme based on the finite element method (FEM). The discrete version of the problem is rigorously analyzed, and we prove the convergence of the numerical approximation under suitable assumptions. This ensures that as the discretization becomes finer, the computed solutions approximate the continuous solution with increasing accuracy. Furthermore, alternative numerical strategies, including the discontinuous Galerkin method (\cite{FWMLWH}) and the virtual element method (\cite{MLFWWH}), offer promising directions for future research and can also be adapted to handle the nonsmooth structure inherent in such control problems.
Finally, to demonstrate the practical effectiveness of the proposed approach, the paper presents numerical experiments. These examples illustrate the theoretical results and provide insights into the behavior of optimal solutions under various physical and geometric configurations.
		
Compared to the Navier-Stokes HVI, the CBFeD HVI with fast-growing nonlinearities, specifically when the absorption exponent $r\in(3,\infty)$, offers certain advantages, particularly regarding the uniqueness of solutions. In this case, Theorem \ref{thm-unique} guarantees uniqueness under conditions \eqref{eqn-unique-con-1} or \eqref{eqn-unique-con-11}, which are independent of the external force $\f$. As a result, for the corresponding optimal control problem, no smallness assumption on the control variable $\f$ is needed. In contrast, when $r\in[1,3]$, condition \eqref{eqn-unique-con-2} indicates that a smallness requirement on $\f$ is essential to ensure uniqueness, thereby imposing restrictions on the admissible controls. Owing to the application of Sobolev embedding results, our analysis is confined to  $r\in[1,\infty)$ for the case $d=2$ and $r\in[1,5]$ when  $d=3$. To the best of our knowledge, this work is the first to address the optimal control of a hemivariational inequality governed by the stationary CBFeD equations, along with the development and analysis of its  finite element  numerical approximation.

Finally, in Section~\ref{sec7}, we provide three numerical examples that validate our theoretical findings. In addition, we propose Algorithm~\ref{algorithm}, an efficient iterative method for solving the mixed FEM-based optimal control hemivariational inequality, which yields desirable convergence properties and computational accuracy. To the best of our knowledge, these are the first computations of their kind in the literature. We consider a range of parameter settings, including cases for the Navier-Stokes equations (with and without damping) as well as for the full CBFeD system. In existing works (\cite{WAMTM, WHKCFJ, WWXLCWH}), the HVI is typically applied in the absence of control, or when the control is prescribed. In contrast, our approach seeks to determine an appropriate control, beginning with an initial guess, we solve the HVI to obtain the corresponding state, and then iteratively update the control via a subgradient scheme (see Section~\ref{sec7} for details). The simulations presented in Section~\ref{sec7} confirm the theoretical results.

\subsection{Structure of the paper} 
The remainder of the paper is organized as follows. Section \ref{sec2} introduces the necessary notation and foundational concepts that will be used throughout the work. The existence and uniqueness problem for a stationary CBFeD hemivariational inequality is addressed in Section \ref{sec3} (Theorems \ref{thm-main-hemi} \ref{thm-unique} and \ref{thm-equivalent}).  In Section \ref{sec4}, we analyze the stability of solutions to the stationary CBFeD hemivariational inequality (Problems \ref{prob-hemi-1} and \ref{prob-hemi-2}) under perturbations in the external force density and the superpotential (Theorem \ref{thm-conv} and Corollary \ref{cor-conver}).  In Section \ref{sec5}, we consider an associated optimal control problem, incorporating control constraints, and show the existence of an optimal control (Theorem \ref{thm-optimal-exis}). Section \ref{sec6} is dedicated to the numerical approximation of this control problem, where we establish the convergence of the proposed numerical scheme within a general framework (Theorems \ref{thm-discrete-conv} and \ref{thm-conv-cost}. Finally, in Section \ref{sec7}, we provide some numerical examples to validate the theory developed in Section \ref{sec6}. 
	
\section{Mathematical Formulation}\label{sec2}\setcounter{equation}{0}
The main objective of this section is to present the fundamental mathematical preliminaries required for the analysis in this work. Additionally, we set up the functional framework relevant to the problem under investigation.

\subsection{Preliminaries}
Throughout the paper, we restrict ourselves to function spaces defined over $\mathbb{R}$. Let $\mathbb{X}$ denote a normed space with norm $\|\cdot\|_{\mathbb{X}}$. We use $\mathbb{X}^*$ to represent its topological dual and $\langle \cdot, \cdot \rangle$ for the action of $\mathbb{X}^*$ on $\mathbb{X}$. Unless explicitly stated, $\mathbb{X}$ will be treated as a Banach space.

We start with the definition of a locally Lipschitz function.
\begin{definition}
	We say that a function $f:\X\to\mathbb{R}$ is locally Lipschitz if for each $\x\in\X$ one can find a neighborhood $U$ of $\x$ and a constant $L_U>0$ such that, for every $\y,\z\in U$, the inequality
	$$	|f(\y)-f(\z)| \leq L_U \|\y-\z\|_{\X}$$
	is satisfied.
\end{definition}

For a locally Lipschitz function, we recall the definitions of Clarke's generalized directional derivative and generalized gradient.

\begin{definition}[{\cite[Definition 5.6.3]{ZdSm1}}]
	Consider a locally Lipschitz mapping $f:\X \to \R$.  
	For $\x\in\X$ and $\bv\in\X$, the \emph{generalized directional derivative} of $f$ at $\x$ along $\bv$ is defined by
	$$f^0(\x;\bv)=\lim_{\y\to\x}\sup_{\lambda \downarrow 0}\frac{f(\y+\lambda \bv)-f(\y)}{\lambda}.$$
	The associated \emph{generalized gradient} (also known as Clarke subdifferential) is the subset of $\X^*$ given by
	$$\partial f(\x)=\{\zeta\in\X^{*}: f^0(\x;\bv)\geq \langle \zeta,\bv\rangle \;\; \text{for all } \bv\in\X\}.$$
	We call locally Lipschitz function $f$  \emph{Clarke regular} at $\x$ whenever the one-sided directional derivative $f'(\x;\bv)$ exists for each $\bv\in\X$ and agrees with $f^0(\x;\bv)$.
\end{definition}

	The following results are used in the sequel. 
\begin{proposition}[{\cite[Proposition 2.1.2]{FHC}, \cite[Proposition 5.6.9]{ZdSm1}}]\label{prop-clarke}
	Assume $f:\X\to\R$ is locally Lipschitz. Then the following hold:
	\begin{enumerate}
		\item[(i)] The subdifferential $\partial f(\x)$ is a nonempty, convex and weak$^*$-compact subset of $\X^{*}$, and each element $\boldsymbol{\zeta}\in\partial f(\x)$ satisfies the estimate $\|\boldsymbol{\zeta}\|_{\X^{*}}\leq L_U$.
		\item[(ii)] For every $\x,\bv\in\X$, the generalized directional derivative can be expressed as
		\[
		f^0(\x;\bv) = \max_{\boldsymbol{\zeta}\in\partial f(\x)} \langle \boldsymbol{\zeta}, \bv \rangle.
		\]
	\end{enumerate}
\end{proposition}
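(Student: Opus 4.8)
The plan is to reduce everything to the sublinearity of the map $\bv\mapsto f^0(\x;\bv)$ together with two applications of the Hahn--Banach theorem. First I would fix $\x$, take the neighborhood $U$ and constant $L_U>0$ from the local Lipschitz property, and verify directly from the limsup definition that $p(\bv):=f^0(\x;\bv)$ is a finite sublinear functional dominated by $L_U\|\bv\|_{\X}$. Finiteness and the bound $|p(\bv)|\le L_U\|\bv\|_{\X}$ follow from the estimate $|f(\y+\lambda\bv)-f(\y)|\le L_U\lambda\|\bv\|_{\X}$, valid once $\y$ is close enough to $\x$ and $\lambda$ is small; dividing by $\lambda$ and passing to the limsup gives the bound. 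Positive homogeneity, $p(t\bv)=tp(\bv)$ for $t>0$, comes from the substitution $\lambda\mapsto\lambda/t$ in the difference quotient, and subadditivity, $p(\bv+\bw)\le p(\bv)+p(\bw)$, follows by writing $f(\y+\lambda(\bv+\bw))-f(\y)=[f(\y+\lambda\bv+\lambda\bw)-f(\y+\lambda\bw)]+[f(\y+\lambda\bw)-f(\y)]$ and using that the limsup of a sum is at most the sum of the limsups (the first bracket being a difference quotient based at the shifted point $\y+\lambda\bw$, which still tends to $\x$).

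Granting this, part (i) follows quickly. Since $p$ is sublinear, the Hahn--Banach theorem (in its sublinear-domination form) produces a linear functional $\boldsymbol{\zeta}$ with $\langle\boldsymbol{\zeta},\bv\rangle\le p(\bv)$ for all $\bv\in\X$; the bound $p\le L_U\|\cdot\|_{\X}$ then forces $\boldsymbol{\zeta}$ to be continuous, hence an element of $\X^*$, so $\partial f(\x)\ne\emptyset$. For any $\boldsymbol{\zeta}\in\partial f(\x)$, applying the defining inequality to both $\bv$ and $-\bv$ yields $|\langle\boldsymbol{\zeta},\bv\rangle|\le L_U\|\bv\|_{\X}$, whence $\|\boldsymbol{\zeta}\|_{\X^*}\le L_U$. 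Convexity is immediate because $\partial f(\x)=\bigcap_{\bv\in\X}\{\boldsymbol{\zeta}\in\X^*:\langle\boldsymbol{\zeta},\bv\rangle\le p(\bv)\}$ is an intersection of half-spaces; each such half-space is weak$^*$-closed, so $\partial f(\x)$ is weak$^*$-closed, and since it lies inside the ball of radius $L_U$ in $\X^*$, the Banach--Alaoglu theorem gives weak$^*$-compactness.

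For part (ii), the inequality $\sup_{\boldsymbol{\zeta}\in\partial f(\x)}\langle\boldsymbol{\zeta},\bv\rangle\le f^0(\x;\bv)$ is built into the definition of $\partial f(\x)$, so it remains to show the supremum is attained and equals $f^0(\x;\bv)$. Fixing $\bv$, I would define a linear functional on the line $\mathrm{span}\{\bv\}$ by $t\bv\mapsto t\,f^0(\x;\bv)$ and check it is dominated by $p$ there: for $t>0$ this is equality by positive homogeneity, while for $t<0$ it reduces to $0\le f^0(\x;\bv)+f^0(\x;-\bv)$, which holds since $0=p(\boldsymbol{0})\le p(\bv)+p(-\bv)$ by subadditivity. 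Hahn--Banach then extends it to some $\boldsymbol{\zeta}_0\in\X^*$ with $\langle\boldsymbol{\zeta}_0,\cdot\rangle\le p$, i.e. $\boldsymbol{\zeta}_0\in\partial f(\x)$, satisfying $\langle\boldsymbol{\zeta}_0,\bv\rangle=f^0(\x;\bv)$; thus the supremum is a maximum with the asserted value.

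I expect the main obstacle to be the clean verification of subadditivity in the first step, since the base point $\y$ moves toward $\x$ inside the limsup: one must argue that splitting the difference quotient at the translated point $\y+\lambda\bw$ is legitimate and that the resulting limsups still control $p(\bv)$ and $p(\bw)$, using that $\y+\lambda\bw\to\x$ as $(\y,\lambda)\to(\x,0^+)$. Everything else is a routine assembly of Hahn--Banach, the $\pm\bv$ trick for the norm bound, and Banach--Alaoglu.
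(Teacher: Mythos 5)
Your proposal is correct and complete; the paper does not prove this proposition itself but cites it to Clarke \cite{FHC} and Denkowski--Mig\'orski--Papageorgiou \cite{ZdSm1}, and your argument is precisely the classical proof given there: sublinearity of $\bv\mapsto f^0(\x;\bv)$ with the moving-base-point splitting (which you correctly identify and resolve, since $\y+\lambda\bw\to\x$ inside the limsup), Hahn--Banach for nonemptiness and for the one-dimensional extension in (ii), the half-space representation for convexity and weak$^*$-closedness, and Banach--Alaoglu for weak$^*$-compactness. Nothing further is needed.
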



\begin{proposition}[{\cite[Proposition 3.23]{SMAOMS}}]\label{prop-sub-diff}
	Let $\X$ be a Banach space and $f:\X \to \R$ a locally Lipschitz function. Then the following hold:
	\begin{enumerate}
		\item[(i)] For every $\x \in \X$, the mapping $\bv \mapsto f^0(\x;\bv)$ is both positively homogeneous and subadditive. More precisely,
		\[
		f^0(\x;\lambda \bv) = \lambda f^0(\x;\bv) 
		\quad \text{for all } \lambda \geq 0,\ \bv \in \X,
		\]
		and
		\[
		f^0(\x;\bv_1+\bv_2) \leq f^0(\x;\bv_1) + f^0(\x;\bv_2)
		\quad \text{for all } \bv_1,\bv_2 \in \X.
		\]
		\item[(ii)] The mapping $(\x,\bv) \mapsto f^0(\x;\bv)$ is upper semicontinuous on $\X \times \X$, that is, whenever sequences 
		$\{\x_n\}$, $\{\bv_n\} \subset \X$ satisfy $\x_n \to \x$ and $\bv_n \to \bv$, we have
		\[
		\limsup_{n\to\infty} f^0(\x_n;\bv_n) \leq f^0(\x;\bv).
		\]
	\end{enumerate}
\end{proposition}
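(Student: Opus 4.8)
The plan is to derive both parts directly from the definition of the Clarke generalized directional derivative as a limit superior of difference quotients, invoking the local Lipschitz hypothesis only at the final, most delicate step. Throughout, I read $f^0(\x;\bv)$ as the joint limit superior of $\frac{f(\y+\lambda\bv)-f(\y)}{\lambda}$ taken as $\y\to\x$ and $\lambda\downarrow 0$, which is finite for locally Lipschitz $f$ thanks to the bound $\|\boldsymbol{\zeta}\|_{\X^*}\le L_U$ from Proposition \ref{prop-clarke}(i).

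For the positive homogeneity in (i), the case $\lambda=0$ is immediate since the difference quotient vanishes, so I would treat $\lambda>0$ by the substitution $s=\lambda t$ in the quotient $\frac{f(\y+t\lambda\bv)-f(\y)}{t}$; because $s\downarrow 0$ exactly when $t\downarrow 0$, pulling out the factor $\lambda$ gives $f^0(\x;\lambda\bv)=\lambda f^0(\x;\bv)$. For subadditivity I would insert the intermediate point $\y+\lambda\bv_2$, splitting $\frac{f(\y+\lambda(\bv_1+\bv_2))-f(\y)}{\lambda}$ into the increment along $\bv_1$ based at $\y+\lambda\bv_2$ plus the increment along $\bv_2$ based at $\y$. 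Using that the limit superior of a sum is at most the sum of the limit superiors, and observing that the shifted base point $\z:=\y+\lambda\bv_2$ still converges to $\x$ as $\y\to\x$ and $\lambda\downarrow 0$, the two pieces are bounded by $f^0(\x;\bv_1)$ and $f^0(\x;\bv_2)$ respectively.

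For the upper semicontinuity in (ii), given $\x_n\to\x$ and $\bv_n\to\bv$, I would use the definition of $f^0(\x_n;\bv_n)$ as a limit superior to select, for each $n$, a base point $\y_n$ with $\|\y_n-\x_n\|_{\X}<1/n$ and a step $t_n\in(0,1/n)$ for which the difference quotient approximates $f^0(\x_n;\bv_n)$ up to an error $1/n$; note that then $\y_n\to\x$ and $t_n\downarrow 0$. I would then compare the quotient built from $\bv_n$ with the one built from $\bv$ by adding and subtracting $f(\y_n+t_n\bv)$.

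The main obstacle, and the only place the hypothesis on $f$ enters, is controlling the resulting error term $\frac{f(\y_n+t_n\bv_n)-f(\y_n+t_n\bv)}{t_n}$. Since $f$ is locally Lipschitz near $\x$, for all large $n$ the points $\y_n+t_n\bv_n$ and $\y_n+t_n\bv$ lie in a common neighborhood of $\x$ on which $f$ admits a Lipschitz constant $L$, so this term is bounded in absolute value by $L\|\bv_n-\bv\|_{\X}\to 0$. Consequently $\limsup_{n\to\infty}f^0(\x_n;\bv_n)$ is dominated by $\limsup_{n\to\infty}\frac{f(\y_n+t_n\bv)-f(\y_n)}{t_n}$, and since this is a particular sequence of difference quotients with $\y_n\to\x$ and $t_n\downarrow 0$, it cannot exceed $f^0(\x;\bv)$ by definition. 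Checking that the selected $\y_n$ and $t_n$ genuinely have the required convergence, which rests on the limit superior being approachable along admissible sequences, is the step I would write out most carefully.
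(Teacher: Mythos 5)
Your argument is correct and is precisely the classical proof of this result: the paper itself offers no proof, citing \cite[Proposition 3.23]{SMAOMS}, and the proof given there (as in Clarke's original Proposition 2.1.1 in \cite{FHC}) proceeds exactly as you do — the change of variable $s=\lambda t$ for positive homogeneity, the insertion of the intermediate point $\y+\lambda\bv_2$ for subadditivity, and for upper semicontinuity the selection of $\y_n$ with $\|\y_n-\x_n\|_{\X}<1/n$ and $t_n\in(0,1/n)$ realizing the limsup up to $1/n$, followed by adding and subtracting $f(\y_n+t_n\bv)$ and absorbing the error via the local Lipschitz bound $L\|\bv_n-\bv\|_{\X}\to 0$. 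Your closing caution is also the right one, and it is harmless: the selected pairs $(\y_n,t_n)$ satisfy $\y_n\to\x$ and $t_n\downarrow 0$, so the remaining difference quotients form an admissible sequence dominated by $f^0(\x;\bv)$ by definition of the limit superior.
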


For a more comprehensive treatment of the generalized directional derivative and Clarke subdifferential, we refer the reader to \cite{FHC,SMAOMS}.

To handle the convergence of the nonlinear term, we make use of the following classical result.  

\begin{lemma}[Brezis--Lions Lemma, {\cite[Lemma 1.3]{JLL}}]\label{Lem-Lions}
	Let $\mathcal{O}$ be a bounded open subset of $\R^n$.  
	Suppose $\{\varphi_m\}_{m\in\N}$ and $\varphi$ are functions in $\mathrm{L}^q(\mathcal{O})$ with $1<q<\infty$, satisfying
	$$\sup_{m\in\N} \|\varphi_m\|_{\mathrm{L}^q(\mathcal{O})} \leq C, 
	\qquad \varphi_m \to \varphi \ \text{almost everywhere in } \mathcal{O} \text{ as } m\to\infty.$$
	Then $\varphi_m$ converges weakly to $\varphi$ in $\mathrm{L}^q(\mathcal{O})$ as $m\to\infty$.
\end{lemma}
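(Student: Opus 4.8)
The plan is to establish the conclusion directly from the definition of weak convergence in $\mathrm{L}^q(\mathcal{O})$, namely to show that $\int_{\mathcal{O}} \varphi_m g \,\d x \to \int_{\mathcal{O}} \varphi g \,\d x$ for every test function $g \in \mathrm{L}^{q'}(\mathcal{O})$, where $q'$ is the H\"older conjugate exponent defined by $\frac1q + \frac1{q'} = 1$. The two hypotheses enter through complementary mechanisms: the almost-everywhere convergence is strong pointwise information but controls nothing globally, while the uniform $\mathrm{L}^q$ bound furnishes the equi-integrability needed to rule out mass escaping onto a small ``bad'' set. The bridge between them is Egorov's theorem, which is available precisely because $\mathcal{O}$ has finite Lebesgue measure.

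Concretely, I would first note that $\|\varphi_m - \varphi\|_{\mathrm{L}^q(\mathcal{O})} \le C + \|\varphi\|_{\mathrm{L}^q(\mathcal{O})} =: C'$ uniformly in $m$, by the triangle inequality and the assumed bound. Fix $g \in \mathrm{L}^{q'}(\mathcal{O})$ and $\eta > 0$. Since $|g|^{q'} \in \mathrm{L}^1(\mathcal{O})$, the absolute continuity of the Lebesgue integral yields a $\delta > 0$ such that $\|g\|_{\mathrm{L}^{q'}(A)} < \eta/(2C')$ whenever $A \subset \mathcal{O}$ is measurable with $|A| < \delta$. By Egorov's theorem, the convergence $\varphi_m \to \varphi$ almost everywhere on the finite-measure set $\mathcal{O}$ upgrades to uniform convergence off a set of small measure: there is a measurable $E_\delta \subset \mathcal{O}$ with $|\mathcal{O}\setminus E_\delta| < \delta$ on whose complement, i.e.\ on $E_\delta$, one has $\varphi_m \to \varphi$ uniformly.

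Next I would split $\int_{\mathcal{O}}(\varphi_m - \varphi)g\,\d x = \int_{E_\delta}(\varphi_m-\varphi)g\,\d x + \int_{\mathcal{O}\setminus E_\delta}(\varphi_m-\varphi)g\,\d x$ and estimate the two pieces separately. For the tail piece, H\"older's inequality together with the uniform bound and the choice of $\delta$ gives $\left|\int_{\mathcal{O}\setminus E_\delta}(\varphi_m-\varphi)g\,\d x\right| \le \|\varphi_m-\varphi\|_{\mathrm{L}^q(\mathcal{O})}\,\|g\|_{\mathrm{L}^{q'}(\mathcal{O}\setminus E_\delta)} \le C' \cdot \eta/(2C') = \eta/2$, uniformly in $m$; here the role of the exponent $q>1$ is exactly to make this H\"older estimate available. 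For the piece over $E_\delta$, uniform convergence gives $\|\varphi_m-\varphi\|_{\mathrm{L}^\infty(E_\delta)} \to 0$, so that $\left|\int_{E_\delta}(\varphi_m-\varphi)g\,\d x\right| \le \|\varphi_m-\varphi\|_{\mathrm{L}^\infty(E_\delta)}\|g\|_{\mathrm{L}^1(\mathcal{O})} < \eta/2$ for all sufficiently large $m$, using that $g \in \mathrm{L}^1(\mathcal{O})$ because $\mathcal{O}$ is bounded. Combining the two bounds yields $\left|\int_{\mathcal{O}}(\varphi_m-\varphi)g\,\d x\right| < \eta$ for large $m$; as $\eta>0$ and $g$ were arbitrary, this is precisely $\varphi_m \rightharpoonup \varphi$ in $\mathrm{L}^q(\mathcal{O})$.

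The step I expect to be the crux is the treatment of the bad set $\mathcal{O}\setminus E_\delta$: one must simultaneously make its $g$-content small (via absolute continuity) and keep the factor $\|\varphi_m-\varphi\|_{\mathrm{L}^q(\mathcal{O})}$ under a bound uniform in $m$ (via the hypothesis), and the quantifiers must be ordered so that $\delta$, and hence $E_\delta$, are fixed \emph{before} $m$ is sent to infinity. An alternative route would invoke the reflexivity of $\mathrm{L}^q(\mathcal{O})$ for $1<q<\infty$ to extract a weakly convergent subsequence and then identify its limit with $\varphi$ via Mazur's lemma (convex combinations drawn from the tail converge almost everywhere to $\varphi$ and strongly to the weak limit, forcing equality), concluding by the usual subsequence-uniqueness argument. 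I prefer the Egorov argument above, as it is self-contained and avoids passing to subsequences.
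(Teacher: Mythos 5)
Your proof is correct. The paper does not prove this lemma at all—it is quoted verbatim with a citation to Lions' book \cite{JLL}, so there is no internal proof to compare against; your Egorov-plus-H\"older argument is precisely the classical proof given in that reference (uniform convergence off a small exceptional set, absolute continuity of $\int|g|^{q'}$ to control the bad set, and the uniform $\mathrm{L}^q$ bound via H\"older to make the tail term small uniformly in $m$), with the quantifiers ordered correctly ($\delta$ and $E_\delta$ fixed before $m\to\infty$). Your closing remark that $q>1$ is what makes the H\"older estimate on the exceptional set available is also the right diagnosis of why the result fails for $q=1$, and the sketched alternative via reflexivity and Mazur's lemma is likewise sound.
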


	\subsection{The model}\label{sub-sec-1}	
Consider a bounded and connected open set $\mathcal{O}\subset \mathbb{R}^d$ with $d\in\{2,3\}$, whose boundary $\Gamma$ is assumed to be Lipschitz. 
A point in $\mathcal{O}$ or on $\Gamma$ will be denoted by $\x$. 
Let $\mathbb{S}^d$ represent the collection of all $d \times d$ symmetric matrices. 
On the spaces $\mathbb{R}^d$ and $\mathbb{S}^d$, we make use of the standard scalar products defined as
\begin{align}
	\bu\cdot\bv&=u_iv_i,\ \bu,\bv\in\R^d,\\
	\boldsymbol{\sigma}\cdot\boldsymbol{\tau}&=\sigma_{ij}\tau_{ij},\ \boldsymbol{\sigma},\boldsymbol{\tau} \in\mathbb{S}^d,
\end{align}
where the Einstein summation rule over repeated indices is employed.

We deal with the two- and three-dimensional forms of the convective Brinkman--Forchheimer extended Darcy (CBFeD) system, which serves as a mathematical model for steady motion of incompressible viscous fluids through porous media:
	\begin{equation}\label{eqn-CBF}
		\left\{
		\begin{aligned}
			-\mu\Delta\bu + (\bu \cdot \nabla)\bu + \alpha\bu + \beta|\bu|^{r-1}\bu + \kappa|\bu|^{q-1}\bu + \nabla p &= \f, \quad \text{in } \mathcal{O}, \\
			\mathrm{div\ }\bu &= 0, \quad \text{in } \mathcal{O}.
		\end{aligned}
		\right.
	\end{equation}
In the above system, $\bu(\cdot):\mathcal{O}\to\mathbb{R}^d$ stands for the velocity field, 
$p(\cdot):\mathcal{O}\to\mathbb{R}$ denotes the pressure, and 
$\f(\cdot):\mathcal{O}\to\mathbb{R}^d$ represents the body force. 
The parameter $\mu>0$ corresponds to the Brinkman coefficient, which reflects the effective viscosity. 
Furthermore, the positive constants $\alpha$ and $\beta$ are associated with the Darcy and Forchheimer damping contributions, 
describing the drag effects related to permeability and porosity of the porous matrix.
 An additional nonlinear term $\kappa|\bu|^{q-1}\bu$ is incorporated to capture a potential pumping mechanism, especially relevant when $\kappa < 0$, which opposes the damping term $\alpha\bu + \beta|\bu|^{r-1}\bu$. Throughout this work, we assume $\kappa < 0$. The exponent $r$, referred to as the absorption exponent, lies in the range $r \in [1, \infty)$, with $r = 3$ identified as the \emph{critical exponent} (\cite[Proposition 1.1.]{KWH}). Additionally, the parameter $q$ satisfies $q \in [1, r)$.	If $\alpha = \beta = \kappa = 0$, the system reduces to the classical Navier-Stokes equations (NSE). When $\alpha, \beta > 0$ and $\kappa = 0$, the system becomes a damped variant of the NSE. The CBFeD model stems from an extended Darcy-Forchheimer law, as discussed in \cite{SGKKMTM,MTT}. The second equation in \eqref{eqn-CBF} enforces the incompressibility condition of the fluid flow.

The CBFeD system presented in \eqref{eqn-CBF} is accompanied by boundary conditions that complete the formulation of the problem. We suppose that the boundary $\Gamma$ of $\mathcal{O}$ is decomposed into two mutually disjoint parts, 
$\Gamma_0$ and $\Gamma_1$, both having positive surface measure. 
Let $\boldsymbol{n}=(n_1,\dots,n_d)$ denote the unit outward normal on $\Gamma$. 
For a vector field $\bu$ defined on the boundary, we distinguish its components as follows: 
the normal part $u_n=\bu\cdot\boldsymbol{n}$ and the tangential component $\bu_\tau=\bu-u_n\boldsymbol{n}$.

For a given velocity field $\bu$ and pressure $p$, the strain-rate tensor is expressed as
$$
\boldsymbol{\varepsilon}(\boldsymbol{u}) = \frac{1}{2}(\nabla \boldsymbol{u} + (\nabla \boldsymbol{u})^\top),
$$
while the associated Cauchy stress tensor takes the form
$$
\boldsymbol{\sigma} = 2\mu \boldsymbol{\varepsilon}(\boldsymbol{u}) - p \I,
$$
with $\I$ denoting the identity matrix. 
On the boundary $\Gamma$, we further decompose the stress vector $\boldsymbol{\sigma} \boldsymbol{n}$ into its normal and tangential components as follows:
$$
\sigma_n = \boldsymbol{n} \cdot \boldsymbol{\sigma} \boldsymbol{n}, \quad \boldsymbol{\sigma}_\tau = \boldsymbol{\sigma} \boldsymbol{n} - \sigma_n \boldsymbol{n}.
$$
The identities $\boldsymbol{u} \cdot \boldsymbol{v} = u_n v_n + \boldsymbol{u}_\tau \cdot \boldsymbol{v}_\tau$ and $(\boldsymbol{\sigma} \boldsymbol{n}) \cdot \boldsymbol{v} = \sigma_n v_n + \boldsymbol{\sigma}_\tau \cdot \boldsymbol{v}_\tau$ are particularly useful in deriving the associated hemivariational inequality. The boundary conditions imposed on the system are
\begin{align}
	\bu&=\boldsymbol{0}\ \text{ on }\ \Gamma_0,\label{eqn-boundary-1}\\
	u_n&=0,\ -\boldsymbol{\sigma}_{\tau}\in\partial j(\bu_{\tau})\ \text{ on }\ \Gamma_1, \label{eqn-boundary-2}
\end{align}
where $j(\bu_\tau)$ is a shorthand for $j(\x,\bu_\tau)$. 
Here, the mapping $j:\Gamma_1\times\R^d \to \R$ is called the \emph{superpotential}, and it is assumed to be locally Lipschitz in its second variable. 
The notation $\partial j$ stands for the \emph{Clarke subdifferential} of the function $j(\x,\cdot)$, defined in the framework of Clarke’s generalized gradient.  Condition \eqref{eqn-boundary-2} encodes a slip-type boundary law. 
If, in addition, $j(\x,\cdot)$ happens to be convex in the second argument, then the coupled system \eqref{eqn-CBF}--\eqref{eqn-boundary-2} reduces to the well-known formulation of a \emph{variational inequality}. 
In contrast, since we do not require convexity of $j$ in this work, the problem naturally falls within the more general class of \emph{hemivariational inequalities}, a setting that is particularly suited for describing processes governed by nonsmooth and nonconvex energy functionals.

	
\subsection{Functional setting} 
In order to cast the system \eqref{eqn-CBF}--\eqref{eqn-boundary-2} into a weak (variational) form, 
we first introduce suitable function spaces. 
For the velocity field, we define
\begin{equation*}
	\mathcal{V} := \Big\{ \bv \in \mathrm{H}^1(\mathcal{O};\mathbb{R}^d) : 
	\bv = \boldsymbol{0} \text{ on } \Gamma_0, \ v_n = 0 \text{ on } \Gamma_1 \Big\}.
\end{equation*}
By employing Korn's inequality and the condition that $|\Gamma_0|>0$ (cf. \cite[Lemma 6.2]{NKJTO}), we have
\begin{align}\label{eqn-equiv-1}
	\|\bv\|_{\mathrm{H}^1(\mathcal{O};\mathbb{R}^d)} \leq C_k \, \|\boldsymbol{\varepsilon}(\bv)\|_{\mathrm{L}^2(\mathcal{O};\mathbb{S}^d)}\,, 
\end{align}
which ensures that $\mathcal{V}$ is a Hilbert space when endowed with the inner product
\begin{align*}
	(\bu,\bv)_{\mathcal{V}} = (\boldsymbol{\varepsilon}(\bu),\boldsymbol{\varepsilon}(\bv))_{\mathrm{L}^2(\mathcal{O};\mathbb{S}^d)}.
\end{align*}
The norm associated with $\mathcal{V}$ is defined by
\begin{align*}
	\|\bu\|_{\mathcal{V}} = \|\boldsymbol{\varepsilon}(\bu)\|_{\mathrm{L}^2(\mathcal{O};\mathbb{S}^d)} 
	= \Bigg(\sum_{i,j=1}^d \int_{\mathcal{O}} |\varepsilon_{ij}(\bu)|^2 \, \d\x \Bigg)^{1/2},
\end{align*}
which is equivalent to the standard $\mathbb{H}^1(\mathcal{O})$ norm. 
For the reduced problem, we focus on the divergence-free subspace of $\mathcal{V}$ given by
\begin{align*}
	\V_{\sigma} := \{ \bv \in \mathcal{V} : \mathrm{div}\,\bv = 0 \ \text{in } \mathcal{O} \}.
\end{align*}
We introduce the space $\mathcal{H} := \mathrm{L}^2(\mathcal{O};\mathbb{R}^d) \equiv \mathbb{L}^2(\mathcal{O})$. 
With this definition, the following continuous embeddings hold:
\begin{align*}
	\mathcal{V} \hookrightarrow \mathcal{H} \equiv \mathcal{H}^* \hookrightarrow \mathcal{V}^*, 
\end{align*}
and
\begin{align*}
	\V_{\sigma} \hookrightarrow \mathcal{H} \equiv \mathcal{H}^* \hookrightarrow \V_{\sigma}^*,
\end{align*}
both of which are dense and compact. Furthermore, let us denote
\[
\mathcal{V}_0 := \mathbb{H}_0^1(\mathcal{O}) \equiv \mathrm{H}_0^1(\mathcal{O};\R^d),
\]
and define the corresponding divergence-free subspace
$$\V_{\sigma,0} := \{ \bv \in \mathcal{V}_0 : \mathrm{div}\,\bv = 0 \ \text{in } \mathcal{O} \}.$$
For the pressure field $p$, we introduce the space
\begin{align*}
	Q := \Big\{ q \in \mathrm{L}^2(\mathcal{O};\mathbb{R}) : 
	(q,1)_{\mathrm{L}^2(\mathcal{O})} := \int_{\mathcal{O}} q(\x) \, d\x = 0 \Big\},
\end{align*}
equipped with the norm $\|q\|_Q := \|q\|_{\mathrm{L}^2(\mathcal{O})}.$

	For the pressure $p$, we define the space 
	
	\subsection{Bilinear, trilinear and nonlinear forms}
We define three bilinear forms, a single trilinear form, and a pair of nonlinear forms, which will play a central role in the weak formulation of the problem. 
For any $\bu, \bv, \bw \in \mathcal{V}$ and $q \in Q$, these forms are given by
\begin{align*}
	a(\bu,\bv) &= \int_{\mathcal{O}} 2\,\boldsymbol{\varepsilon}(\bu) : \boldsymbol{\varepsilon}(\bv) \, \d\x, 
	& a_0(\bu,\bv) &= \int_{\mathcal{O}} \bu \cdot \bv \, \d\x,\\
	d(\bv,q) &= -\int_{\mathcal{O}} q\, \mathrm{div}\,\bv \, \d\x,\\
	b(\bu,\bv,\bw) &= \int_{\mathcal{O}} (\bu \cdot \nabla)\bv \cdot \bw \, \d\x,\\
	c(\bu,\bv) &= \int_{\mathcal{O}} |\bu|^{r-1} \bu \cdot \bv \, \d\x, 
	& c_0(\bu,\bv) &= \int_{\mathcal{O}} |\bu|^{q-1} \bu \cdot \bv \, \d\x.
\end{align*}
In addition, for any $\f \in \L^2(\mathcal{O}) = \mathrm{L}^2(\mathcal{O};\mathbb{R}^d)$, we define the inner product
\begin{align*}
	(\f,\bv)_{\L^2(\mathcal{O})} := \int_{\mathcal{O}} \f \cdot \bv \, \d\x,
\end{align*}
while for $\f \in \V^*$, the duality pairing with $\bv \in \V$ is denoted by 
$\langle \f, \bv \rangle.$

	
It is important to observe that the bilinear form $a(\cdot,\cdot)$ enjoys both boundedness and coercivity on the space $\mathcal{V}$. In other words, $a(\bu,\bv)$ is continuous and satisfies the ellipticity property for every $\bu,\bv \in \mathcal{V}$. More precisely, one has:
%
%
	\begin{align}
		|a(\bu,\bv)|	&\leq 2\|\bu\|_{\V}\|\bv\|_{\V},\ \text{ for all }\ \bu,\bv\in\V,\label{eqn-a-est-1}\\
		a(\bu,\bu)&=2\|\bu\|_{\V}^2, \ \text{ for all }\ \bu\in\V. \label{eqn-a-est-2}
	\end{align}
By combining the Gagliardo-Nirenberg inequality \cite[Theorem 1, pp.~11--12]{LNi} with Korn's inequality (see \eqref{eqn-equiv-1}), it follows that the trilinear form $b(\cdot,\cdot,\cdot)$ is continuous, that is, bounded in $\mathcal{V}$:
	\begin{align}\label{eqn-b-est-1}
		|b(\bu,\bv,\bw)|&\leq \|\bu\|_{\mathbb{L}^4(\mathcal{O})}\|\nabla\bv\|_{\mathbb{L}^2(\mathcal{O})}\|\bw\|_{\mathbb{L}^4(\mathcal{O})}\nonumber\\&\leq C_k C_g^2\|\bu\|_{\H}^{1-\frac{d}{4}}\|\bu\|_{\H^1}^{\frac{d}{4}}\|\bv\|_{\V}\|\bw\|_{\H}^{1-\frac{d}{4}}\|\bw\|_{\H^1}^{\frac{d}{4}}\nonumber\\&\leq C_b\|\bu\|_{\V}\|\bv\|_{\V}\|\bw\|_{\V},
	\end{align}
	where $$C_b=C_k^3C_g^2,$$ $C_g$ denotes the constant arising from the Gagliardo--Nirenberg inequality, while $C_k$ is the constant introduced in \eqref{eqn-equiv-1}.
	 Moreover, the divergence free and boundary conditions of $\bu\in\V$ yield
	\begin{align}
		&b(\bu,\bv,\bw)=-b(\bu,\bw,\bv)\ \text{ for all }\bu,\bv,\bw\in\V, \label{eqn-b-est-3}\\
		&b(\bu,\bv,\bv)=0\ \text{ for all }\bu,\bv\in\V. \label{eqn-b-est-4}
	\end{align} 
	The nonlinear form $c(\cdot,\cdot)$ is continuous when acting on 
	$\L^{r+1}(\mathcal{O}) \times \L^{r+1}(\mathcal{O})$, because
	\begin{align}\label{eqn-c-est}
		|c(\bu,\bv)|\leq\|\bu\|_{\L^{r+1}}^r\|\bv\|_{\L^{r+1}}. 
	\end{align}
	Furthermore, we infer 
	\begin{align}\label{eqn-c-est-1}
		c(\bu,\bu)=\|\bu\|_{\L^{r+1}}^{r+1}. 
	\end{align}
	From \cite[Section 2.4]{SGMTM}, we infer 
	for all 	$\bu,\bv\in\L^{r+1}(\mathcal{O})$ and $r\geq 1$ that 
	\begin{align}\label{2.23}
		&\langle\bu|\bu|^{r-1}-\bv|\bv|^{r-1},\bu-\bv\rangle\geq \frac{1}{2}\||\bu|^{\frac{r-1}{2}}(\bu-\bv)\|_{\H}^2+\frac{1}{2}\||\bv|^{\frac{r-1}{2}}(\bu-\bv)\|_{\H}^2\geq 0,
	\end{align}
	and 
	\begin{align}\label{Eqn-mon-lip}
		&\langle\bu|\bu|^{r-1}-\bv|\bv|^{r-1},\bu-\bv\rangle
		\geq \frac{1}{2^{r-1}}\|\bu-\bv\|_{\L^{r+1}}^{r+1}.
	\end{align}
	For $\mathcal{C}(\bu)=|\bu|^{r-1}\bu$ such that $c(\bu,\bv)=\langle\mathcal{C}(\bu),\bv\rangle$ for all $\bv\in{\L}^{r+1}$,  the Gateaux derivative is given by 
	\begin{align}\label{Gaetu}
		\mathcal{C}'(\bu)\bv&=\left\{\begin{array}{cl}\bv,&\text{ for }r=1,\\ \left\{\begin{array}{cc}|\bu|^{r-1}\bv+(r-1)\frac{\bu}{|\bu|^{3-r}}(\bu\cdot\bv),&\text{ if }\bu\neq \boldsymbol{0},\\\boldsymbol{0},&\text{ if }\bu=\boldsymbol{0},\end{array}\right.&\text{ for } 1<r<3,\\ |\bu|^{r-1}\bv+(r-1)\bu|\bu|^{r-3}(\bu\cdot\bv), &\text{ for }r\geq 3.\end{array}\right.
	\end{align}
	Similar estimates hold for $c_0(\bu,\bv)$ also. 
	
	The bilinear form $d(\cdot,\cdot)$ is bounded in $\V\times Q$, since 
	\begin{align}
		|d(\bv,q)|\leq C\|\bv\|_{\V}\|q\|_Q \ \text{ for all }\ \bv\in\V\ \text{ and }\ q\in Q. 
	\end{align}
	
	\subsection{Assumptions on the superpotential}
	Concerning the superpotential $j$, we shall work under the following set of hypotheses:
	\begin{hypothesis}\label{hyp-sup-j}
	The mapping 	$j:\Gamma_1\times\R^d\to \R$ is such that 
		\begin{itemize}
			\item[(H1)] for every $\boldsymbol{\xi}\in\R^d$, the mapping 
			$\x \mapsto j(\x,\boldsymbol{\xi})$ is measurable on $\Gamma_1$, and in addition 
			$j(\cdot,\mathbf{0}) \in \mathbb{L}^1(\Gamma_1)$;
			
			\item[(H2)] for almost every $\x \in \Gamma_1$, the function 
			$\boldsymbol{\xi} \mapsto j(\x,\boldsymbol{\xi})$ is locally Lipschitz on $\R^d$;
			
			\item[(H3)] there exist constants $k_0,k_1 \geq 0$ such that, for almost every 
			$\x\in\Gamma$ and all $\boldsymbol{\xi}\in\R^d$, every 
			$\boldsymbol{\eta}\in \partial j(\x,\boldsymbol{\xi})$ satisfies
			\[
			|\boldsymbol{\eta}| \leq k_0 + k_1|\boldsymbol{\xi}|;
			\]
			
			\item [(H4)] $(\boldsymbol{\eta}_1-\boldsymbol{\eta}_2)\cdot(\boldsymbol{\xi}_1-\boldsymbol{\xi}_2)\geq -\delta_1|\boldsymbol{\xi}_1-\boldsymbol{\xi}_2|^2$ for all $\boldsymbol{\xi}_i\in\R^d$, $\boldsymbol{\eta}_i\in \partial j(\x,\boldsymbol{\xi}_i)$, $i=1,2,$ for a.e. $\x\in\Gamma$  with $\delta_1\geq 0$. 
		\end{itemize}
	\end{hypothesis}
	
	By Hypothesis \ref{hyp-sup-j} (H3) and Proposition \ref{prop-clarke} (ii), we have 
	\begin{align}\label{eqn-j0-est}
		|j^0(\x,\boldsymbol{\xi}_1;\boldsymbol{\xi}_2)|\leq \left(k_0+k_1|\boldsymbol{\xi}_1|\right)|\boldsymbol{\xi}_2|\ \text{ for all }\ \boldsymbol{\xi}_1, \boldsymbol{\xi}_2\in\R^d, \ \text{ a.e. }\ \x\in\Gamma_1. 
	\end{align}
	
	Assumption (H4) is usually described in the literature as a relaxed monotonicity requirement 
	(see \cite[Definition 3.49]{SMAOMS}). An equivalent formulation can be stated as:
	\begin{align}\label{eqn-alternative}
		j^0(\x,\boldsymbol{\xi}_1;\boldsymbol{\xi}_2-\boldsymbol{\xi}_1)+	j^0(\x,\boldsymbol{\xi}_2;\boldsymbol{\xi}_1-\boldsymbol{\xi}_2)\leq\delta_1|\boldsymbol{\xi}_1-\boldsymbol{\xi}_2|^2\ \text{ for all }\ \boldsymbol{\xi}_1,\boldsymbol{\xi}_2\in\R^d. 
	\end{align}
	We now introduce the functional $J:\L^2(\Gamma)\to\R$ given by
	\begin{align}\label{eqn-J-def}
		J(\bv)=\int_{\Gamma_1}j(\x, \bv_{\tau}(\x))\d S,\ \bv\in\L^2(\mathcal{O}). 
	\end{align}

	The next lemma, inspired by \cite[Lemma 13]{SMAO} and \cite[Lemma 6.2]{CFWH}, is stated here in a slightly modified form.

	\begin{lemma}\label{lem-hemi-var}
	Suppose that $j:\Gamma_1\times\R\to\R$ fulfills Hypothesis~\ref{hyp-sup-j}. Then the functional $J$ introduced in \eqref{eqn-J-def} enjoys the following properties:
		\begin{enumerate}
			\item $J(\cdot)$ is locally Lipschitz in $\mathbb{L}^2(\Gamma_1)$. 
			\item $\|\z\|_{\mathbb{L}^2(\Gamma_1)}\leq k_0|\Gamma_1|^{1/2}+k_1\|\bv\|_{\L^2(\Gamma_1)}$ for all $\bv\in\L^2(\Gamma_1)$, $\z\in\partial J(\bv)$ with $k_0,k_1\geq 0$,
			\item $J^0(\bu;\bv)\leq\int_{\Gamma_1}j^0(\bu_{\tau}(\x);\bv_{\tau}(\x))\d S$ for all $\bu,\bv\in\mathbb{L}^2(\Gamma_1)$. 
			\item $(\z_1-\z_2,\bu_1-\bu_2)_{\mathbb{L}^2(\Gamma_1)}\geq -\delta_1\|\bu_1-\bu_2\|_{\mathbb{L}^2(\Gamma_1)}^2$ for all $\z_i\in\partial J(\bu_i)$, $\bu_i\in \L^2(\Gamma_1)$, $i=1,2$, with $\delta_1\geq 0$. 
		\end{enumerate}
	\end{lemma}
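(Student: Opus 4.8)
The plan is to establish the four properties of the functional $J$ by transferring the pointwise hypotheses on $j$ (Hypothesis~\ref{hyp-sup-j}) to the integral functional, relying on the trace operator and the growth condition (H3). First I would recall that the tangential trace map $\bv \mapsto \bv_\tau$ is linear and continuous from $\L^2(\Gamma_1)$ into itself, so that $J(\bv) = \int_{\Gamma_1} j(\x,\bv_\tau(\x))\,\d S$ is well defined; here (H1) guarantees measurability and integrability at the origin, while (H2) and (H3) give the locally Lipschitz growth needed for the integral to make sense. This preliminary setup is what lets me apply the standard machinery for Clarke subdifferentials of integral functionals.

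For part~(1), the plan is to fix $\bu \in \L^2(\Gamma_1)$ and use (H3) together with the mean value form of the local Lipschitz estimate: for $\bv,\bw$ in a bounded neighborhood, $|j(\x,\bv_\tau)-j(\x,\bw_\tau)| \le (k_0 + k_1(|\bv_\tau|+|\bw_\tau|))|\bv_\tau - \bw_\tau|$ pointwise, then integrate over $\Gamma_1$ and apply Cauchy--Schwarz to bound $|J(\bv)-J(\bw)|$ by a constant (depending on the neighborhood) times $\|\bv-\bw\|_{\L^2(\Gamma_1)}$. For part~(2), I would invoke the Aubin--Clarke type theorem (as in the cited \cite{SMAO,CFWH} results) identifying any $\z \in \partial J(\bv)$ with a selection $\boldsymbol{\eta}(\x) \in \partial j(\x,\bv_\tau(\x))$, apply the bound $|\boldsymbol{\eta}(\x)| \le k_0 + k_1|\bv_\tau(\x)|$ from (H3), take the $\L^2(\Gamma_1)$ norm, and use the triangle inequality with $\|k_0\|_{\L^2(\Gamma_1)} = k_0|\Gamma_1|^{1/2}$ and the contractivity of the tangential projection. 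Part~(3) is the subadditivity/interchange estimate $J^0(\bu;\bv) \le \int_{\Gamma_1} j^0(\x,\bu_\tau;\bv_\tau)\,\d S$, which follows from Fatou's lemma applied to the difference quotients defining the generalized directional derivative, combined with Proposition~\ref{prop-sub-diff}(ii) (upper semicontinuity of $j^0$).

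For part~(4), the relaxed monotonicity, my plan is to take $\z_i \in \partial J(\bu_i)$, represent them via measurable selections $\boldsymbol{\eta}_i(\x) \in \partial j(\x,(\bu_i)_\tau(\x))$ using the same Aubin--Clarke representation, apply the pointwise relaxed monotonicity (H4) with $\boldsymbol{\xi}_i = (\bu_i)_\tau(\x)$ and $\boldsymbol{\eta}_i$, integrate the inequality $(\boldsymbol{\eta}_1-\boldsymbol{\eta}_2)\cdot((\bu_1)_\tau-(\bu_2)_\tau) \ge -\delta_1|(\bu_1)_\tau-(\bu_2)_\tau|^2$ over $\Gamma_1$, and finally use $|(\bu_1)_\tau - (\bu_2)_\tau| \le |\bu_1 - \bu_2|$ to pass to the full $\L^2(\Gamma_1)$ norm on the right-hand side. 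The main obstacle I anticipate is justifying the measurable-selection representation of elements of $\partial J$ in terms of $\partial j$ rigorously---that is, verifying the hypotheses of the Aubin--Clarke theorem (normal integrand structure, growth bounds) so that $\partial J(\bv) \subseteq \{\z : \z(\x) \in \partial j(\x,\bv_\tau(\x))\ \text{a.e.}\}$; once this identification is in place, parts~(2) and~(4) reduce to integrating the corresponding pointwise estimates, so the technical heart of the proof is really this passage between the pointwise and integral subdifferentials rather than any single estimate.
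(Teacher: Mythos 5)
Your proposal is correct and coincides with the argument the paper relies on: the paper states Lemma~\ref{lem-hemi-var} without proof, citing \cite[Lemma 13]{SMAO} and \cite[Lemma 6.2]{CFWH}, whose proofs proceed exactly as you outline --- Lebourg's mean value theorem combined with (H3) for the local Lipschitz property, the Aubin--Clarke representation of elements of $\partial J$ by measurable selections of $\partial j(\x,\cdot)$ for parts (2) and (4), and a Fatou-type interchange together with the upper semicontinuity of $j^0$ (Proposition~\ref{prop-sub-diff}(ii)) for part (3). You also correctly flag the Aubin--Clarke inclusion $\partial J(\bv)\subseteq\{\z:\z(\x)\in\partial j(\x,\bv_{\tau}(\x))\ \text{a.e.}\}$ as the technical crux and handle the contractivity of the tangential projection, so nothing essential is missing.
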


	\section{Problem formulation and the CBFeD Hemivariational Inequality}\label{sec3}\setcounter{equation}{0}
	In order to obtain the weak formulation of system \eqref{eqn-CBF}--\eqref{eqn-boundary-2}, 
	we first recast the primary equation \eqref{eqn-CBF} in the form:
	\begin{align}\label{eqn-concrete-new}
		-2\mu\mathrm{div}(\boldsymbol{\varepsilon}(\bu))+(\bu\cdot\nabla\bu)+\alpha\bu+\beta|\bu|^{r-1}\bu+\kappa|\bu|^{q-1}\bu+\nabla p=\f, \ \text{ in }\ \mathcal{O}.
	\end{align}
	Suppose that the system \eqref{eqn-CBF}--\eqref{eqn-boundary-2} admits a sufficiently smooth solution 
	$(\bu,p)$, ensuring that the calculations in the following derivation are well-defined: We take the inner product of equation \eqref{eqn-concrete-new} with an arbitrary smooth test function $\bv\in\V\cap\L^{r+1}(\mathcal{O})$ to find 
	\begin{align}
		&\int_{\mathcal{O}}\left[-2\mu\mathrm{div}(\boldsymbol{\varepsilon}(\bu))\cdot\bv+(\bu\cdot\nabla\bu)\cdot\bv+\alpha\bu\cdot\bv+\beta|\bu|^{r-1}\bu\cdot\bv+\kappa|\bu|^{q-1}\bu\cdot\bv+\nabla p\right]\d\x \nonumber\\&\quad=\int_{\mathcal{O}}\f\cdot\bv\d\x. 
	\end{align}
	By carrying out integration by parts, we deduce that
	\begin{align}
		\int_{\mathcal{O}}\Big[2\mu\boldsymbol{\varepsilon}(\bu)\cdot\boldsymbol{\varepsilon}(\bu)+(\bu\cdot\nabla\bu)\cdot\bv+\alpha\bu\cdot\bv&+\beta|\bu|^{r-1}\bu\cdot\bv+\kappa|\bu|^{q-1}\bu\cdot\bv-p\mathrm{div\ }\bv\Big]\d\x\nonumber\\-\int_{\Gamma}\boldsymbol{\sigma}\n\cdot\bv\d S &=\int_{\mathcal{O}}\f\cdot\bv\d\x. 
	\end{align}
	Upon applying the boundary conditions satisfied by $\bv$, it follows that
	\begin{align}
		\mu a(\bu,\bv)+b(\bu,\bu,\bv)+\alpha a_0(\bu,\bv)&+\beta c(\bu,\bv)+\kappa c_0(\bu,\bv)+d(\bv,p)\nonumber\\+\int_{\Gamma_1}(-\boldsymbol{\sigma}_\tau)\cdot\bv_{\tau}\d S&=	(\f,\bv)_{\L^2(\mathcal{O})}. 
	\end{align}
	Invoking the boundary condition \eqref{eqn-boundary-2}, it follows that
	\begin{align}
		-\boldsymbol{\sigma}_\tau\in\partial j(\bu_{\tau})\ \text{ on }\ \Gamma_1, 
	\end{align}
	so that 
	\begin{align}
		\int_{\Gamma_1}(-\boldsymbol{\sigma}_\tau)\cdot\bv_{\tau}\d S\leq \int_{\Gamma_1}j^0(\bu_{\tau};\bv_{\tau})\d S.
	\end{align}
	Consequently, if $\f \in \V^*$, then for every smooth $\bv \in \V \cap \L^{r+1}(\mathcal{O}),$ we obtain
	\begin{align}
		\mu a(\bu,\bv)+b(\bu,\bu,\bv)+\alpha a_0(\bu,\bv)&+\beta c(\bu,\bv)+\kappa c_0(\bu,\bv)+d(\bv,p)\nonumber\\+\int_{\Gamma_1}j^0(\bu_{\tau};\bv_{\tau})\d S&\geq 	\langle\f,\bv\rangle. 
	\end{align}
	In the next step, we multiply the second equation in \eqref{eqn-CBF} by an arbitrary function $q \in Q$ and integrate over $\mathcal{O}$, leading to
	\begin{align}
		d(\bu,q)=0. 
	\end{align}
	
 We thus arrive at the hemivariational inequality associated with problem \eqref{eqn-CBF}–\eqref{eqn-boundary-2}, which takes the form:
	\begin{problem}\label{prob-hemi-1}
		Find $\bu\in\V\cap\L^{r+1}(\mathcal{O})$ and $p\in Q$  such that
		\begin{equation}\label{eqn-hemi-1}
			\left\{
			\begin{aligned}
				\mu a(\bu,\bv)+b(\bu,\bu,\bv)+\alpha a_0(\bu,\bv)&+\beta c(\bu,\bv)+\kappa c_0(\bu,\bv)+d(\bv,p)\\+\int_{\Gamma_1}j^0(\bu_{\tau};\bv_{\tau})\d S&\geq 	\langle\f,\bv\rangle, \ \text{ for all }\ \bv\in \V\cap\L^{r+1}(\mathcal{O}),\\
				d(\bu,q)&=0,\ \text{ for all }\ q\in Q. 
			\end{aligned}\right.
		\end{equation}
	\end{problem}
	
	By eliminating the unknown pressure variable $p$, the problem reduces to the following hemivariational inequality:
	\begin{problem}\label{prob-hemi-2}
		Find $\bu\in\V_{\sigma}\cap\L^{r+1}(\mathcal{O})$ such that 
		\begin{equation}\label{eqn-hemi-2}
			\left\{
			\begin{aligned}
				\mu a(\bu,\bv)+b(\bu,\bu,\bv)+\alpha a_0(\bu,\bv)&+\beta c(\bu,\bv)+\kappa c_0(\bu,\bv)+\int_{\Gamma_1}j^0(\bu_{\tau};\bv_{\tau})\d S\\&\geq 	\langle\f,\bv\rangle, \ \text{ for all }\ \bv\in \V_{\sigma}\cap\L^{r+1}(\mathcal{O}).
			\end{aligned}\right.
		\end{equation}
	\end{problem}
	
	The forthcoming section is devoted to discuss the existence and uniqueness of solutions for Problems~\ref{prob-hemi-1} and \ref{prob-hemi-2} 
	(see Theorems~\ref{thm-main-hemi}, \ref{thm-unique}, and \ref{thm-equivalent}).

	\subsection{The CBFeD Hemivariational Inequality}
	We begin by analyzing Problem~\ref{prob-hemi-2}. 
	To ensure the existence of a solution, we assume the following smallness condition:
	\begin{align}\label{eqn-cond}
		k_1<2\mu\lambda_0,
	\end{align}
	where $k_1$ corresponds to the constant appearing in Hypothesis~\ref{hyp-sup-j}, and $\lambda_0$ denotes the smallest eigenvalue of the related eigenvalue problem:
	\begin{align}
		\bu\in\V, \ \int_{\mathcal{O}}\boldsymbol{\varepsilon}(\bu):\boldsymbol{\varepsilon}(\bv)\d\x =\lambda\int_{\Gamma_1}\bu_{\tau}\cdot\bv_{\tau}\d S\ \text{ for all }\ \bv\in\V. 
	\end{align}
	Because the trace mapping $\bu \mapsto \bu_{\tau}\big|_{\Gamma_1}$ is compact from $\V$ into $\L^2(\Gamma_1)$ 
	(due to the compact embedding $\H^1(\mathcal{O}) \hookrightarrow \L^2(\Gamma)$), the spectral theory for compact self-adjoint operators guarantees the existence of a sequence of eigenvalues 
	$\{\lambda_k\}_{k=0}^{\infty}$ satisfying $ \lambda_k > 0$ and $\lambda_k \to +\infty$.  
	Furthermore, the following trace inequality holds:
	\begin{align}\label{eqn-trace}
		\|\bv_{\tau}\|_{\L^2(\Gamma_1)}\leq\lambda_0^{-1/2}\|\bv\|_{\V} \ \text{ for all }\ \bv\in\V. 
	\end{align}

	The following result provides the existence of a solution to Problem \ref{prob-hemi-2}. For the solvability of Problem \ref{prob-hemi-2}, we emphasize that the parameters are taken as $r \in [1, \infty)$ and $q \in [1, r)$, with no additional restrictions imposed on the exponent $r$.

	\begin{theorem}[{\cite[Theorem 3.1]{WAMTM}}]\label{thm-main-hemi}
		Under the assumptions of Hypothesis~\ref{hyp-sup-j} (H1)–(H3) together with \eqref{eqn-cond}, 
		Problem~\ref{prob-hemi-2} admits at least one solution $\bu \in \V_{\sigma} \cap \L^{r+1}(\mathcal{O})$.
	\end{theorem}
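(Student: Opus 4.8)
The plan is to recast Problem~\ref{prob-hemi-2} as a single multivalued operator inclusion on the reflexive Banach space $X:=\V_{\sigma}\cap\L^{r+1}(\mathcal{O})$, normed by $\|\bv\|_X=\|\bv\|_{\V}+\|\bv\|_{\L^{r+1}(\mathcal{O})}$, and then invoke a surjectivity theorem for bounded, coercive, pseudomonotone multivalued operators (the pseudomonotone framework of \cite{ZNPDP,SMAO}). First I would introduce the single-valued operator $\A\colon X\to X^{*}$ by
\begin{align*}
\langle\A\bu,\bv\rangle=\mu a(\bu,\bv)+b(\bu,\bu,\bv)+\alpha a_0(\bu,\bv)+\beta c(\bu,\bv)+\kappa c_0(\bu,\bv),
\end{align*}
and the multivalued term $\mathcal{N}\colon X\to 2^{X^{*}}$ by $\mathcal{N}\bu=\gamma^{*}\partial J(\gamma\bu)$, where $\gamma\colon X\to\L^2(\Gamma_1)$ is the tangential trace $\bu\mapsto\bu_{\tau}|_{\Gamma_1}$ and $J$ is the functional from \eqref{eqn-J-def}. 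If $\bu$ solves the inclusion $\f\in\A\bu+\mathcal{N}\bu$, then there is $\boldsymbol{\eta}\in\partial J(\gamma\bu)$ with $\langle\A\bu,\bv\rangle+(\boldsymbol{\eta},\gamma\bv)_{\L^2(\Gamma_1)}=\langle\f,\bv\rangle$; combining the subgradient inequality with Lemma~\ref{lem-hemi-var}(3) gives $(\boldsymbol{\eta},\gamma\bv)_{\L^2(\Gamma_1)}\le\int_{\Gamma_1}j^0(\bu_{\tau};\bv_{\tau})\,\d S$, so that $\bu$ solves the HVI. Hence it suffices to solve the inclusion.

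To apply the theorem I would verify that $\A+\mathcal{N}$ is bounded, pseudomonotone and coercive. Boundedness follows termwise from \eqref{eqn-a-est-1}, \eqref{eqn-b-est-1}, \eqref{eqn-c-est} together with Lemma~\ref{lem-hemi-var}(2). For pseudomonotonicity, the decisive ingredient is the passage to the limit in the nonlinear terms along a sequence $\bu_n\rightharpoonup\bu$ in $X$: the compact embedding $\V\hookrightarrow\mathcal{H}$ and the compact trace $\gamma$ give $\bu_n\to\bu$ in $\mathcal{H}$ and $\gamma\bu_n\to\gamma\bu$ in $\L^2(\Gamma_1)$, hence, up to a subsequence, $\bu_n\to\bu$ a.e. in $\mathcal{O}$. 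Since $\{|\bu_n|^{r-1}\bu_n\}$ and $\{|\bu_n|^{q-1}\bu_n\}$ are bounded in $\L^{(r+1)/r}(\mathcal{O})$ and $\L^{(q+1)/q}(\mathcal{O})$ respectively, the Brezis--Lions Lemma~\ref{Lem-Lions} yields their weak convergence to $|\bu|^{r-1}\bu$ and $|\bu|^{q-1}\bu$. Combining this with the monotonicity inequality \eqref{2.23} for the Forchheimer term, the complete continuity of the convective term (via $\V\hookrightarrow\L^4(\mathcal{O})$), and the boundedness and upper semicontinuity of $\partial J$ (Proposition~\ref{prop-clarke} and (H3)) for the set-valued boundary term, one verifies the pseudomonotonicity inequality in its generalized multivalued form.

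The remaining estimate is coercivity. Testing the inclusion against $\bu$ and using $b(\bu,\bu,\bu)=0$ from \eqref{eqn-b-est-4}, $a(\bu,\bu)=2\|\bu\|_{\V}^2$ from \eqref{eqn-a-est-2}, and $c(\bu,\bu)=\|\bu\|_{\L^{r+1}}^{r+1}$ from \eqref{eqn-c-est-1}, for any $\boldsymbol{\eta}\in\partial J(\gamma\bu)$ I obtain
\begin{align*}
\langle\A\bu+\gamma^{*}\boldsymbol{\eta},\bu\rangle
=2\mu\|\bu\|_{\V}^2+\alpha\|\bu\|_{\mathcal{H}}^2+\beta\|\bu\|_{\L^{r+1}}^{r+1}+\kappa\|\bu\|_{\L^{q+1}}^{q+1}+(\boldsymbol{\eta},\bu_{\tau})_{\L^2(\Gamma_1)}.
\end{align*}
Since $q<r$ and $\mathcal{O}$ is bounded, Young's inequality absorbs $|\kappa|\,\|\bu\|_{\L^{q+1}}^{q+1}$ into $\tfrac{\beta}{2}\|\bu\|_{\L^{r+1}}^{r+1}$ up to an additive constant, while Lemma~\ref{lem-hemi-var}(2) with the trace inequality \eqref{eqn-trace} gives $(\boldsymbol{\eta},\bu_{\tau})_{\L^2(\Gamma_1)}\ge-\big(k_1\lambda_0^{-1}\|\bu\|_{\V}^2+k_0|\Gamma_1|^{1/2}\lambda_0^{-1/2}\|\bu\|_{\V}\big)$. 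The resulting lower bound carries the factor $(2\mu-k_1\lambda_0^{-1})\|\bu\|_{\V}^2$, which is strictly positive precisely because of the smallness condition \eqref{eqn-cond}; together with the retained $\tfrac{\beta}{2}\|\bu\|_{\L^{r+1}}^{r+1}$ term this delivers coercivity of $\A+\mathcal{N}$ on $X$. The surjectivity theorem then produces $\bu\in X$ with $\f\in\A\bu+\mathcal{N}\bu$, and the equivalence established in the first paragraph completes the proof.

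I expect the main obstacle to lie in the pseudomonotonicity of $\A+\mathcal{N}$, for two reasons. First, the pumping term $\kappa c_0$ with $\kappa<0$ is genuinely non-monotone, and its Nemytskii map need not be completely continuous when $q$ is large; consequently the limit passage cannot be reduced to a compact-perturbation argument and must instead be carried out through a.e. convergence combined with the Brezis--Lions Lemma~\ref{Lem-Lions}. Second, the boundary operator $\mathcal{N}=\gamma^{*}\partial J(\gamma\,\cdot)$ is set-valued and, since the relaxed-monotonicity condition (H4) is \emph{not} assumed in this theorem, its pseudomonotonicity must be extracted solely from the compactness of the trace together with the growth bound (H3) and the upper semicontinuity of the Clarke subdifferential. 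Coercivity, by contrast, is a direct computation whose only delicate point is the absorption of the boundary term, which succeeds exactly under the smallness condition \eqref{eqn-cond}.
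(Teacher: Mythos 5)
Your proposal is correct and follows essentially the same route as the paper's source for this result: Theorem~\ref{thm-main-hemi} is imported from \cite{WAMTM}, where (as the paper's own literature review indicates for the CBF/CBFeD hemivariational inequalities) existence is established via a surjectivity theorem for bounded, coercive, pseudomonotone multivalued operators on $\V_{\sigma}\cap\L^{r+1}(\mathcal{O})$, with the Clarke-subdifferential boundary term handled through $\gamma^{*}\partial J(\gamma\,\cdot)$ and Lemma~\ref{lem-hemi-var}, the fast-growing nonlinearities through a.e.\ convergence plus the Brezis--Lions Lemma~\ref{Lem-Lions}, and coercivity exactly through the absorption of the boundary contribution under the smallness condition \eqref{eqn-cond}. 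Your identification of the delicate points (non-monotone pumping term with $\kappa<0$, set-valued boundary operator without (H4)) matches where the cited argument does its real work.
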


We now establish that each solution of Problem~\ref{prob-hemi-2} remains bounded.
	\begin{proposition}[{\cite[Proposition 3.2]{WAMTM}}]\label{prop-energy-est}
		Assuming the conditions of Theorem~\ref{thm-main-hemi}, let $\bu \in \V_{\sigma} \cap \L^{r+1}$ be a solution of Problem~\ref{prob-hemi-2}. Then
		\begin{align}\label{eqn-bound}
			\|\bu\|_{\V}^2+	\|\bu\|_{\L^{r+1}}^{r+1}\leq 2\max\bigg\{\frac{1}{\left(2\mu-k_1\lambda_0^{-1}\right)},\frac{1}{\beta}\bigg\} K_{\f}=:\widetilde{K}_{\f},
		\end{align}
		where
		\begin{align}\label{eqn-value-k} K_{\f}=\frac{1}{\left(2\mu-k_1\lambda_0^{-1}\right)}\left(\|\f\|_{\V^*}+k_0|\Gamma_1|^{1/2}\lambda_0^{-1/2}\right)^2+ |\kappa|^{\frac{r+1}{r-q}},
		\end{align}
		and  the constants $k_0$ and $k_1$ are from Hypothesis \ref{hyp-sup-j} (H3). 
	\end{proposition}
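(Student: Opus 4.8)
The plan is to test the hemivariational inequality \eqref{eqn-hemi-2} with the admissible element $\bv=-\bu$, which is legitimate since $\V_{\sigma}\cap\L^{r+1}(\mathcal{O})$ is a linear space. Using the identities $a(\bu,\bu)=2\|\bu\|_{\V}^2$ from \eqref{eqn-a-est-2}, $b(\bu,\bu,\bu)=0$ from \eqref{eqn-b-est-4}, $c(\bu,\bu)=\|\bu\|_{\L^{r+1}}^{r+1}$ from \eqref{eqn-c-est-1}, together with $a_0(\bu,\bu)=\|\bu\|_{\mathcal{H}}^2$ and $c_0(\bu,\bu)=\|\bu\|_{\L^{q+1}}^{q+1}$, and noting that $(-\bu)_{\tau}=-\bu_{\tau}$, I would rearrange the resulting inequality into
\begin{align*}
	2\mu\|\bu\|_{\V}^2+\alpha\|\bu\|_{\mathcal{H}}^2+\beta\|\bu\|_{\L^{r+1}}^{r+1}+\kappa\|\bu\|_{\L^{q+1}}^{q+1}\leq \langle\f,\bu\rangle+\int_{\Gamma_1}j^0(\bu_{\tau};-\bu_{\tau})\d S.
\end{align*}
Since $\kappa<0$, the term $\kappa\|\bu\|_{\L^{q+1}}^{q+1}$ on the left is nonpositive; transferring it to the right converts it into $+|\kappa|\|\bu\|_{\L^{q+1}}^{q+1}$.

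I would then estimate the right-hand side term by term. The forcing is handled by duality, $\langle\f,\bu\rangle\leq\|\f\|_{\V^*}\|\bu\|_{\V}$. For the boundary term, the growth bound \eqref{eqn-j0-est} gives $j^0(\bu_{\tau};-\bu_{\tau})\leq(k_0+k_1|\bu_{\tau}|)|\bu_{\tau}|$; integrating over $\Gamma_1$, applying the Cauchy--Schwarz inequality to the $k_0$ contribution, and invoking the trace inequality \eqref{eqn-trace} yields
\begin{align*}
	\int_{\Gamma_1}j^0(\bu_{\tau};-\bu_{\tau})\d S\leq k_0|\Gamma_1|^{1/2}\lambda_0^{-1/2}\|\bu\|_{\V}+k_1\lambda_0^{-1}\|\bu\|_{\V}^2.
\end{align*}
Transferring the quadratic term $k_1\lambda_0^{-1}\|\bu\|_{\V}^2$ to the left produces the coefficient $(2\mu-k_1\lambda_0^{-1})$, which is strictly positive precisely because of the smallness hypothesis \eqref{eqn-cond}.

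The step I expect to be the crux is the treatment of the pumping term $|\kappa|\|\bu\|_{\L^{q+1}}^{q+1}$, which carries the unfavorable sign and is the feature distinguishing the CBFeD model from the Navier--Stokes setting of \cite{WWXCWH}. Since $q\in[1,r)$, I would absorb it into the Forchheimer dissipation via Young's inequality with the conjugate exponents $\frac{r+1}{q+1}$ and $\frac{r+1}{r-q}$, calibrating the free parameter so that
\begin{align*}
	|\kappa|\,\|\bu\|_{\L^{q+1}}^{q+1}\leq\frac{\beta}{2}\|\bu\|_{\L^{r+1}}^{r+1}+|\kappa|^{\frac{r+1}{r-q}}.
\end{align*}
This is exactly the mechanism by which the constant $|\kappa|^{\frac{r+1}{r-q}}$ enters $K_{\f}$, and it is what makes the bound independent of the magnitude of $\kappa$ in the remaining terms. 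Transferring the $\tfrac{\beta}{2}\|\bu\|_{\L^{r+1}}^{r+1}$ part to the left leaves $\tfrac{\beta}{2}\|\bu\|_{\L^{r+1}}^{r+1}$ there.

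Finally, I would split $\big(\|\f\|_{\V^*}+k_0|\Gamma_1|^{1/2}\lambda_0^{-1/2}\big)\|\bu\|_{\V}$ by a further Young's inequality (with parameter $2\mu-k_1\lambda_0^{-1}$), absorbing the resulting $\|\bu\|_{\V}^2$-part into the left-hand side and leaving the square $\tfrac{1}{2(2\mu-k_1\lambda_0^{-1})}\big(\|\f\|_{\V^*}+k_0|\Gamma_1|^{1/2}\lambda_0^{-1/2}\big)^2$. Discarding the nonnegative contribution $\alpha\|\bu\|_{\mathcal{H}}^2$ and multiplying through by $2$ leaves
\begin{align*}
	(2\mu-k_1\lambda_0^{-1})\|\bu\|_{\V}^2+\beta\|\bu\|_{\L^{r+1}}^{r+1}\leq K_{\f}+|\kappa|^{\frac{r+1}{r-q}}\leq 2K_{\f},
\end{align*}
with $K_{\f}$ as in \eqref{eqn-value-k}, where the last inequality uses $|\kappa|^{\frac{r+1}{r-q}}\leq K_{\f}$. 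Estimating the left-hand side below by $\min\{2\mu-k_1\lambda_0^{-1},\beta\}\big(\|\bu\|_{\V}^2+\|\bu\|_{\L^{r+1}}^{r+1}\big)$ and dividing then yields \eqref{eqn-bound} with $\widetilde K_{\f}=2\max\{(2\mu-k_1\lambda_0^{-1})^{-1},\beta^{-1}\}K_{\f}$.
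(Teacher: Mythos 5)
Your overall skeleton is the right one, and it is the standard energy-estimate argument that the paper (which defers the proof to \cite[Proposition 3.2]{WAMTM}) intends: test \eqref{eqn-hemi-2} with $\bv=-\bu$, use $a(\bu,\bu)=2\|\bu\|_{\V}^2$, $b(\bu,\bu,\bu)=0$, $c(\bu,\bu)=\|\bu\|_{\L^{r+1}}^{r+1}$, bound the boundary term through \eqref{eqn-j0-est}, Cauchy--Schwarz and the trace inequality \eqref{eqn-trace}, absorb $k_1\lambda_0^{-1}\|\bu\|_{\V}^2$ using the smallness condition \eqref{eqn-cond}, absorb the pumping term into the Forchheimer dissipation by Young's inequality, and finish with a Young step on $\big(\|\f\|_{\V^*}+k_0|\Gamma_1|^{1/2}\lambda_0^{-1/2}\big)\|\bu\|_{\V}$; your final bookkeeping from the inequality $(2\mu-k_1\lambda_0^{-1})\|\bu\|_{\V}^2+\beta\|\bu\|_{\L^{r+1}}^{r+1}\leq K_{\f}+|\kappa|^{\frac{r+1}{r-q}}\leq 2K_{\f}$ to \eqref{eqn-bound} is clean and correct.

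There is, however, one step that does not hold as you state it, and it is exactly the step you identify as the crux: the inequality
\begin{align*}
|\kappa|\,\|\bu\|_{\L^{q+1}}^{q+1}\leq\frac{\beta}{2}\|\bu\|_{\L^{r+1}}^{r+1}+|\kappa|^{\frac{r+1}{r-q}}
\end{align*}
is false in general. Young's inequality with exponents $\frac{r+1}{q+1}$ and $\frac{r+1}{r-q}$ cannot produce the additive constant $|\kappa|^{\frac{r+1}{r-q}}$ with coefficient one: the sharp pointwise estimate is $|\kappa|\,t^{q+1}\leq\frac{\beta}{2}\,t^{r+1}+\left(\frac{2}{\beta}\right)^{\frac{q+1}{r-q}}|\kappa|^{\frac{r+1}{r-q}}$ for $t\geq 0$ (split at the threshold $t_0=(2|\kappa|/\beta)^{\frac{1}{r-q}}$), and integrating over $\mathcal{O}$ necessarily introduces the factor $|\mathcal{O}|$, so the correct form reads $|\kappa|\,\|\bu\|_{\L^{q+1}}^{q+1}\leq\frac{\beta}{2}\|\bu\|_{\L^{r+1}}^{r+1}+\left(\frac{2}{\beta}\right)^{\frac{q+1}{r-q}}|\mathcal{O}|\,|\kappa|^{\frac{r+1}{r-q}}$. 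To see that the $\beta$- and $|\mathcal{O}|$-dependence cannot be removed, take $\bu\equiv c$ constant with $c$ small enough that $|\kappa|c^{q+1}>\frac{\beta}{2}c^{r+1}$ (possible since $q<r$): the left side minus the absorbed term grows linearly in $|\mathcal{O}|$ and eventually exceeds the fixed constant $|\kappa|^{\frac{r+1}{r-q}}$. Carrying the honest constant through your argument yields \eqref{eqn-bound} with $|\kappa|^{\frac{r+1}{r-q}}$ in \eqref{eqn-value-k} replaced by $C(\beta,q,r)\,|\mathcal{O}|\,|\kappa|^{\frac{r+1}{r-q}}$; the clean coefficient one in the stated $K_{\f}$ is legitimate only when $C(\beta,q,r)|\mathcal{O}|\leq 1$ (e.g., under a normalization such as the unit-square domain used in the paper's numerics), so you should either impose that or keep the explicit constant, in the same spirit as the explicit constants $\varrho_{1,r},\varrho_{2,r}$ appearing in \eqref{eqn-rho-2}.
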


	We now turn our attention to establishing the uniqueness of the solution.
	
	\begin{theorem}[{\cite[Theorem 3.3]{WAMTM}}]\label{thm-unique}
		Assume Hypothesis~\ref{hyp-sup-j} (H1)–(H4) holds and let $r \in (3, \infty)$. Suppose that either
		\begin{align}\label{eqn-unique-con-1}
			\mbox{$\mu>\frac{\delta_1}{2\lambda_0}$\  and \ $\alpha\geq ({\varrho}_{1,r}+\varrho_{2,r}+\varrho_{3,r})$,}
		\end{align}
		where 
			\begin{align}\label{eqn-rho-2}
			\varrho_{i,r}=\left(\frac{r-q}{r-1}\right)\left(\frac{2i(q-1)}{\beta(r-1)}\right)^{\frac{q-1}{r-q}}\left( |\kappa| q2^{q-1}\right)^{\frac{r-1}{r-q}},\ i=1,2,
		\end{align}
		and 
		\begin{align}\label{eqn-varrhotilde}
			{\varrho}_{3,r}=\left(\frac{C_k^2}{2(2\mu-\delta_1\lambda_0^{-1})}\right)^{\frac{r-1}{r-3}}\left(\frac{r-3}{r-1}\right)\left(\frac{8}{\beta (r-1)}\right)^{\frac{2}{r-3}}
		\end{align}
		or 
		\begin{align}\label{eqn-unique-con-11}
			\mbox{ $\mu>\frac{\delta_1}{2\lambda_0}$, $\alpha\geq (\varrho_{1,r}+\varrho_{2,r}+\widehat{\varrho}_{3})$\  and \ $\beta\geq 4\widehat{\varrho}_{3}$,}
		\end{align}
		where 
		\begin{align}\label{eqn-varrhohat}
			\widehat{\varrho}_{3}=\frac{C_k^2}{2(2\mu-\delta_1\lambda_0^{-1})}
		\end{align}
		holds.	Then Problem  \ref{prob-hemi-2} has a unique solution. 
		
		For $r\in[1,3]$, assume that 
		\begin{align}\label{eqn-unique-con-2}
			\mbox{$\mu>\frac{\delta_1}{2\lambda_0}$\  and \  $\alpha>	\widehat{\varrho}_4(C_gC_k)^{\frac{8}{4-d}}\widetilde{K}^{\frac{4}{4-d}}+\varrho_{1,r}+\varrho_{2,r}$,}
		\end{align}
		where 
		\begin{align}\label{eqn-varrhohat-1}
			\widehat{\varrho}_4=C_g^{\frac{8}{4-d}}C_k^{\frac{8}{4-d}}\left(\frac{8}{4-d}\right)\left(\frac{4+d}{2(2\mu-\delta_1\lambda_0^{-1})}\right)^{\frac{4+d}{4-d}}
		\end{align}
		and $\widetilde{K}$ is defined in \eqref{eqn-bound}, then Problem~\ref{prob-hemi-2} possesses a unique solution. 
		
		Moreover, the mapping $\boldsymbol{f} \in \V^* \mapsto \boldsymbol{u} \in \V_{\sigma} \cap \L^{r+1}$ is
		
		\begin{align*}
			\left\{
		\begin{array}{ll}
		\mbox{Lipschitz continuous,}&  \mbox{for $d=2$ with $r\in[1,\infty)$  and for $d=3$ with $r\in[1,5]$,} \\
		\mbox{H\"older continuous,}& \mbox{for $d=3$ with $r\in(5,\infty)$.}
		\end{array}
		\right.
		\end{align*}
	\end{theorem}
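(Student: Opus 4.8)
The plan is to argue by the standard \emph{test-with-the-difference} technique for hemivariational inequalities. Suppose $\bu_1,\bu_2\in\V_{\sigma}\cap\L^{r+1}(\mathcal{O})$ both solve Problem~\ref{prob-hemi-2}. Writing \eqref{eqn-hemi-2} for $\bu_1$ with test function $\bv=\bu_2-\bu_1$ and for $\bu_2$ with $\bv=\bu_1-\bu_2$, and adding the two, the right-hand sides cancel and I obtain an inequality for $\bw:=\bu_1-\bu_2$. I would then evaluate each contribution: by \eqref{eqn-a-est-2} the viscous term yields $-2\mu\|\bw\|_{\V}^2$; the two $j^0$-terms combine, via the relaxed-monotonicity identity \eqref{eqn-alternative} integrated over $\Gamma_1$ together with the trace inequality \eqref{eqn-trace}, into a bound $\delta_1\lambda_0^{-1}\|\bw\|_{\V}^2$, so that these contributions merge into $-(2\mu-\delta_1\lambda_0^{-1})\|\bw\|_{\V}^2$, which explains the standing requirement $\mu>\delta_1/(2\lambda_0)$. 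Using \eqref{eqn-b-est-4} the convective terms collapse to the single expression $-b(\bw,\bu_1,\bw)$, while the Forchheimer term is bounded below by \eqref{2.23} and \eqref{Eqn-mon-lip}, producing the strictly dissipative quantities $-\tfrac{\beta}{2}\bigl(\||\bu_1|^{(r-1)/2}\bw\|_{\H}^2+\||\bu_2|^{(r-1)/2}\bw\|_{\H}^2\bigr)$ and $-\tfrac{\beta}{2^{r-1}}\|\bw\|_{\L^{r+1}}^{r+1}$.

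The two terms carrying the ``wrong'' sign are the convective contribution $-b(\bw,\bu_1,\bw)=b(\bw,\bw,\bu_1)$ and, because $\kappa<0$, the pumping term, which contributes $|\kappa|\langle|\bu_1|^{q-1}\bu_1-|\bu_2|^{q-1}\bu_2,\bw\rangle\ge0$. For the latter I would use a mean-value estimate for $\boldsymbol{\xi}\mapsto|\boldsymbol{\xi}|^{q-1}\boldsymbol{\xi}$ together with Young's inequality with the conjugate exponents $\tfrac{r-1}{q-1}$ and $\tfrac{r-1}{r-q}$ to split each $\int_{\mathcal{O}}|\bu_i|^{q-1}|\bw|^2\d\x$ into a piece absorbed by the Forchheimer budget $\||\bu_i|^{(r-1)/2}\bw\|_{\H}^2$ and a piece absorbed by $\alpha\|\bw\|_{\H}^2$; this is exactly the origin of the constants $\varrho_{1,r},\varrho_{2,r}$ in \eqref{eqn-rho-2}. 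The treatment of the convective term is where the dichotomy in $r$ appears.

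For $r\in(3,\infty)$ I would estimate $|b(\bw,\bw,\bu_1)|\le\|\nabla\bw\|_{\L^2}\,\||\bu_1|\,|\bw|\|_{\H}$ and then split $\int_{\mathcal{O}}|\bu_1|^2|\bw|^2\d\x$ by Young's inequality with exponents $\tfrac{r-1}{2}$ and $\tfrac{r-1}{r-3}$, so that it is dominated by $\int_{\mathcal{O}}|\bu_1|^{r-1}|\bw|^2\d\x$ plus $\|\bw\|_{\H}^2$; a final Young step on the product with $\|\nabla\bw\|_{\L^2}$ (using $\|\nabla\bw\|_{\L^2}\le C_k\|\bw\|_{\V}$) transfers the gradient part into the coercive term $(2\mu-\delta_1\lambda_0^{-1})\|\bw\|_{\V}^2$ and produces the constant $\varrho_{3,r}$ of \eqref{eqn-varrhotilde}; the alternative splitting, in which a fraction of the $\beta$-term is kept separate, yields the companion conditions \eqref{eqn-unique-con-11}. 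Since the full convective and pumping contributions are then absorbed by the viscous, Darcy and Forchheimer dissipation, \eqref{eqn-unique-con-1} forces $\bw=\boldsymbol{0}$ with \emph{no} smallness assumption on $\f$. For $r\in[1,3]$ the Forchheimer monotonicity is too weak to dominate the convection, so instead I would bound $|b(\bw,\bw,\bu_1)|$ through the Gagliardo--Nirenberg estimate \eqref{eqn-b-est-1}, writing $\|\bw\|_{\L^4}^2\lesssim\|\bw\|_{\H}^{2-d/2}\|\bw\|_{\V}^{d/2}$ and invoking the a priori bound $\|\bu_1\|_{\V}\le\widetilde{K}^{1/2}$ from \eqref{eqn-bound}; Young's inequality then yields a coefficient proportional to $\widetilde{K}^{4/(4-d)}$, which is precisely the smallness-type restriction \eqref{eqn-unique-con-2} needed to absorb the convection into the $\alpha$-term.

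Finally, for the continuous dependence of $\bu$ on $\f$, I would repeat the computation for two data $\f_1,\f_2$ with solutions $\bu_1,\bu_2$; now the right-hand sides no longer cancel but leave a term bounded by $|\langle\f_1-\f_2,\bw\rangle|\le\|\f_1-\f_2\|_{\V^*}\|\bw\|_{\V}$. After absorbing all nonlinear terms as above, one is left with an inequality of the form $C_*\|\bw\|_{\V}^2\le\|\f_1-\f_2\|_{\V^*}\|\bw\|_{\V}$ when the $\L^{r+1}$-contribution is subordinate to the $\V$-norm, which happens for $d=2$ (any $r$) and for $d=3$ with $r\le5$ thanks to the Sobolev embedding $\H^1(\mathcal{O})\hookrightarrow\L^6(\mathcal{O})$; this gives Lipschitz continuity. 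For $d=3$ with $r>5$ the genuinely higher-order $\L^{r+1}$ term cannot be reduced to the $\V$-norm and only a fractional power of $\|\f_1-\f_2\|_{\V^*}$ is recovered, yielding Hölder continuity. I expect the principal difficulty to lie in the bookkeeping of constants when simultaneously absorbing $b(\bw,\bw,\bu_1)$ and the positive pumping term into the viscous, Darcy and Forchheimer dissipation: the exponents in the Young inequalities must be chosen so that both the $\||\bu_i|^{(r-1)/2}\bw\|_{\H}^2$ budget supplied by \eqref{2.23} and the $\alpha\|\bw\|_{\H}^2$ budget are respected, which is what pins down the precise thresholds in \eqref{eqn-unique-con-1}--\eqref{eqn-unique-con-2}.
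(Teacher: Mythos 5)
Your proposal is correct and follows essentially the same approach as the paper's proof (given in \cite{WAMTM} and mirrored almost verbatim in Step~3 of the proof of Theorem~\ref{thm-conv} here): testing the two inequalities with the difference, absorbing the $j^0$-terms via the relaxed monotonicity \eqref{eqn-alternative} and the trace inequality \eqref{eqn-trace} into $(2\mu-\delta_1\lambda_0^{-1})\|\bw\|_{\V}^2$, using \eqref{2.23} for the Forchheimer dissipation, and performing exactly the Young splittings with exponents $\tfrac{r-1}{q-1},\tfrac{r-1}{r-q}$ (producing $\varrho_{1,r},\varrho_{2,r}$, cf.\ \eqref{eqn-con-26}--\eqref{eqn-con-31}) and $\tfrac{r-1}{2},\tfrac{r-1}{r-3}$ (producing $\varrho_{3,r}$ and, in the variant keeping part of the $\beta$-budget, $\widehat{\varrho}_3$), with the Gagliardo--Nirenberg/$\widetilde{K}$ bound for $r\in[1,3]$ (cf.\ \eqref{eqn-con-32}--\eqref{eqn-con-33}). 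Your account of the Lipschitz/H\"older dichotomy --- Lipschitz when $\L^{r+1}$ is subordinate to the $\V$-norm by Sobolev embedding, H\"older with a fractional power recovered from the $\frac{\beta}{2^r}\|\bw\|_{\L^{r+1}}^{r+1}$ term when $d=3$, $r>5$ --- likewise matches the paper's argument.
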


	\begin{remark}
		If one considers convective Brinkman-Forchheimer (CBF) HVI, that is, Problem \ref{prob-hemi-1} with $\kappa=0$, then $\varrho_{i,r}=0$ for $i=1,2$ in \eqref{eqn-unique-con-1}, \eqref{eqn-unique-con-11} and \eqref{eqn-unique-con-2}. 
	\end{remark}

We now turn our attention to Problem~\ref{prob-hemi-1}. 
From this point onward, we assume that $1 \le r \le \frac{2d}{d-2}$, with the convention that $1 \le r < \infty$ when $d=2$, which we denote as $1 \le r \le \frac{2d}{(d-2)^+}$. 
Under this assumption, the Sobolev embedding theorem ensures that $\H^1(\mathcal{O}) \hookrightarrow \L^{r+1}(\mathcal{O})$, and thus Problem~\ref{prob-hemi-1} reduces to finding $\bu \in \V$. 
We next state the inf-sup condition (also referred to as the Ladyzhenskaya–Babuška–Brezzi (LBB) condition; see \cite[Theorem 2.2]{JSHNJW}):

		\begin{align}\label{eqn-inf-sup}
			\vartheta\|q\|_{\mathrm{L}^2}\leq 	\sup_{\bv\in\V_0}\frac{d(\bv,q)}{\|\bv\|_{\V}}, \ \text{ for all }\ q\in Q,
		\end{align}
		The above condition is used  to prove the next result.
		\begin{theorem}[{\cite[Theorem 3.5]{WAMTM}}]\label{thm-equivalent}
			Assuming $1 \le r \le \frac{2d}{(d-2)^+}$ and the conditions of Theorem~\ref{thm-unique}, 
			Problem~\ref{prob-hemi-1} possesses a unique solution.
			
			Furthermore, $p\in Q$ depends locally Lipschitz continuously on $\f\in\V^*$. 
		\end{theorem}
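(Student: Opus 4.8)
The plan is to prove Theorem~\ref{thm-equivalent} by reducing Problem~\ref{prob-hemi-1} to the already-solved Problem~\ref{prob-hemi-2} and recovering the pressure through the inf-sup condition \eqref{eqn-inf-sup}. Because the standing assumption $1\le r\le\frac{2d}{(d-2)^+}$ forces the Sobolev embedding $\H^1(\mathcal{O})\hookrightarrow\L^{r+1}(\mathcal{O})$, we have $\V\cap\L^{r+1}=\V$ and $\V_{\sigma}\cap\L^{r+1}=\V_{\sigma}$, so every form below is well defined and bounded on $\V$. First I would invoke Theorems~\ref{thm-main-hemi} and \ref{thm-unique} to produce the unique velocity $\bu\in\V_{\sigma}$ solving Problem~\ref{prob-hemi-2}, together with its a priori bound \eqref{eqn-bound}. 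The argument then splits into constructing a pressure so that $(\bu,p)$ solves Problem~\ref{prob-hemi-1}, proving uniqueness, and establishing continuous dependence of $p$ on $\f$.

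For the construction I introduce the bounded linear functional on $\V$,
\[
\ell(\bv)=\langle\f,\bv\rangle-\mu a(\bu,\bv)-b(\bu,\bu,\bv)-\alpha a_0(\bu,\bv)-\beta c(\bu,\bv)-\kappa c_0(\bu,\bv),
\]
whose boundedness follows from \eqref{eqn-a-est-1}, \eqref{eqn-b-est-1}, \eqref{eqn-c-est} and the embedding above. For $\bv\in\V_{\sigma,0}$ the tangential trace $\bv_{\tau}$ vanishes on $\Gamma_1$, so $j^0(\bu_{\tau};\bv_{\tau})=0$ by the positive homogeneity in Proposition~\ref{prop-sub-diff}; testing Problem~\ref{prob-hemi-2} with $\pm\bv$ then yields $\ell(\bv)=0$ on $\V_{\sigma,0}$, the divergence-free subspace of $\V_0$. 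The de Rham theorem, in the form enabled by \eqref{eqn-inf-sup}, produces a unique $p\in Q$ with $\ell(\bv)=d(\bv,p)$ for all $\bv\in\V_0$, i.e.
\begin{equation}\label{eqn-v0-id}
\mu a(\bu,\bv)+b(\bu,\bu,\bv)+\alpha a_0(\bu,\bv)+\beta c(\bu,\bv)+\kappa c_0(\bu,\bv)+d(\bv,p)=\langle\f,\bv\rangle,\quad \bv\in\V_0.
\end{equation}
To pass from $\V_0$ to the full test space I use that \eqref{eqn-inf-sup} makes $\mathrm{div}:\V_0\to Q$ surjective, hence $\V=\V_0+\V_{\sigma}$: given $\bv\in\V$ one has $\mathrm{div}\,\bv\in Q$ (since $\int_{\Gamma}\bv\cdot\n\,dS=0$ by the boundary conditions), so solving $\mathrm{div}\,\bv_0=\mathrm{div}\,\bv$ in $\V_0$ gives $\bv_{\sigma}:=\bv-\bv_0\in\V_{\sigma}$. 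Writing $\bv=\bv_0+\bv_{\sigma}$ and using that all forms are linear in the test slot, that $d(\bv_{\sigma},p)=0$, that $(\bv_0)_{\tau}=\boldsymbol{0}$ on $\Gamma_1$ (so the $j^0$-term sees only $\bv_{\sigma}$), and applying \eqref{eqn-v0-id} to $\bv_0$ and Problem~\ref{prob-hemi-2} to $\bv_{\sigma}$, the inequality of Problem~\ref{prob-hemi-1} follows on all of $\V$; the constraint $d(\bu,q)=0$ holds since $\bu\in\V_{\sigma}$.

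For uniqueness, any solution $(\bu,p)$ of Problem~\ref{prob-hemi-1}, restricted to divergence-free test functions, solves Problem~\ref{prob-hemi-2}, so $\bu$ is the unique velocity of Theorem~\ref{thm-unique}; testing with $\pm\bv$ for $\bv\in\V_0$ (where again $j^0$ vanishes) recovers \eqref{eqn-v0-id}, so two pressures differ by an element annihilated by $d(\cdot,q)$ on $\V_0$, and \eqref{eqn-inf-sup} forces them to coincide. For continuous dependence I apply \eqref{eqn-inf-sup} to the difference of the relations \eqref{eqn-v0-id} for two data $\f_1,\f_2$,
\[
\vartheta\|p_1-p_2\|_{\mathrm{L}^2}\le\sup_{\bv\in\V_0}\frac{\ell_1(\bv)-\ell_2(\bv)}{\|\bv\|_{\V}},
\]
and estimate the right-hand side termwise via \eqref{eqn-a-est-1}, \eqref{eqn-b-est-1} and the local Lipschitz property of $\bu\mapsto|\bu|^{r-1}\bu$ on $\L^{r+1}$, using the uniform bound \eqref{eqn-bound} to control $\|\bu_i\|_{\V}$ and $\|\bu_i\|_{\L^{r+1}}$. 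This gives $\|p_1-p_2\|_{\mathrm{L}^2}\le C\big(\|\f_1-\f_2\|_{\V^*}+\|\bu_1-\bu_2\|_{\V}\big)$, so combining with the velocity estimate of Theorem~\ref{thm-unique} transfers the local Lipschitz dependence of $\bu$ on $\f$ directly to $p$.

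I expect the main obstacle to be the pressure step: arranging matters so that the inf-sup condition, stated only on $\V_0$, actually controls the nonsmooth hemivariational inequality posed on the larger space $\V$. The resolution is the observation that test functions in $\V_0$ have vanishing tangential trace, which simultaneously kills the $j^0$-contribution (pinning $p$ down uniquely, independently of any subgradient selection) and reduces the pressure recovery to the classical de Rham/Bogovskii machinery through the splitting $\V=\V_0+\V_{\sigma}$. A secondary technical point is the control of the Forchheimer and Darcy-type nonlinearities $c,c_0$ in the Lipschitz estimate, which is precisely where the embedding $\H^1(\mathcal{O})\hookrightarrow\L^{r+1}(\mathcal{O})$, and hence the restriction $1\le r\le\frac{2d}{(d-2)^+}$, is indispensable.
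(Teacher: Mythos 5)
Your proposal is correct and takes essentially the route the paper relies on: the paper does not reprove this theorem but imports it from \cite[Theorem 3.5]{WAMTM}, and the machinery it sets up for exactly this purpose --- the inf-sup condition \eqref{eqn-inf-sup} stated immediately before the theorem, and the pressure estimate reproduced as \eqref{eqn-con-7} in Step~1 of the proof of Theorem~\ref{thm-conv} --- matches your three steps (vanishing of the $j^0$-term on $\V_0$ plus de Rham to recover $p$, the splitting $\V=\V_0+\V_{\sigma}$ via surjectivity of the divergence to extend the inequality to all of $\V$, and transfer of the velocity stability estimate to $p$ through \eqref{eqn-inf-sup}). One caveat worth noting: on the sliver $d=3$, $r\in(5,6]$ allowed by $1\le r\le\frac{2d}{(d-2)^+}$, Theorem~\ref{thm-unique} gives only H\"older continuity of $\f\mapsto\bu$, so your transfer argument yields H\"older rather than Lipschitz dependence of $p$ there --- but this imprecision is inherited from the paper's own statement, whose analysis is anyway confined to $r\in[1,5]$ when $d=3$.
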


		\section{A stability result}\setcounter{equation}{0} \label{sec4}
		For further analysis, we first consider a perturbed stationary CBFeD inequality, in which the external force density $\f$ and the superpotential $j$ are replaced by their respective perturbations $\f_m \in \V^*$ and $j_m$, for $m\in\N$. Motivated from \cite[Section 3.1]{WWXCWH}, similar to Hypothesis \ref{hyp-sup-j}, we make the following assumptions regarding $j_m$: 
		
	\begin{hypothesis}\label{hyp-sup-jm}
		The mapping $j_m:\Gamma_1\times\R^d\to\R$ is such that 
	\begin{itemize}
		\item [(H1)] For all $\boldsymbol{\xi} \in \R^d$, the function $j_m(\cdot, \boldsymbol{\xi})$ is measurable on $\Gamma_1$, and $j_m(\cdot, \boldsymbol{0}) \in \mathbb{L}^1(\Gamma_1)$;
		\item [(H2)] For almost every $\x \in \Gamma_1$, the mapping $\boldsymbol{\xi} \mapsto j_m(\x, \boldsymbol{\xi})$ is locally Lipschitz continuous on $\R^d$;
		\item [(H3)] For almost every $\x \in \Gamma$ and all $\boldsymbol{\xi} \in \R^d$, each $\boldsymbol{\eta} \in \partial j_m(\x, \boldsymbol{\xi})$ satisfies
		\[
		|\boldsymbol{\eta}| \le k_{0,m} + k_{1,m} |\boldsymbol{\xi}|, \quad 0 \le k_{0,m} \le k_0, \ 0 \le k_{1,m} \le k_1;
		\]
		\item [(H4)] For almost every $\x \in \Gamma$ and for all $\boldsymbol{\xi}_i \in \R^d$, $\boldsymbol{\eta}_i \in \partial j_m(\x, \boldsymbol{\xi}_i)$, $i=1,2$, there holds
		\[
		(\boldsymbol{\eta}_1 - \boldsymbol{\eta}_2) \cdot (\boldsymbol{\xi}_1 - \boldsymbol{\xi}_2) \ge -\delta_{1,m} |\boldsymbol{\xi}_1 - \boldsymbol{\xi}_2|^2,
		\]
		where $\delta_{1,m} \ge 0$ and there exists a constant $\delta_1 > 0$ such that $\delta_{1,m} \le \delta_1$ for every $m \in \N$.
	\end{itemize}
	\end{hypothesis}
	Condition (H4) is often called the \emph{relaxed monotonicity condition} in the literature (see \cite[Definition 3.49]{SMAOMS}) and can be equivalently expressed as:
	for a.e. $\x\in\Gamma_1$,
	\begin{align}\label{eqn-alternative-1}
		j^0_m(\x,\boldsymbol{\xi}_1;\boldsymbol{\xi}_2-\boldsymbol{\xi}_1)+	j^0_m(\x,\boldsymbol{\xi}_2;\boldsymbol{\xi}_1-\boldsymbol{\xi}_2)\leq\delta_{1,m}|\boldsymbol{\xi}_1-\boldsymbol{\xi}_2|^2\ \text{ for all }\ \boldsymbol{\xi}_1,\boldsymbol{\xi}_2\in\R^d. 
	\end{align}
		The corresponding perturbed problem is formulated as follows: 
			\begin{problem}\label{prob-hemi-approx}
			Find $\bu_m\in\V$ and $p\in Q$  such that
			\begin{equation}\label{eqn-hemi-approx}
				\left\{
				\begin{aligned}
					\mu a(\bu_m,\bv)+b(\bu_m,\bu_m,\bv)+\alpha a_0(\bu_m,\bv)&+\beta c(\bu_m,\bv)+\kappa c_0(\bu_m,\bv)+d(\bv,p_m)\\+\int_{\Gamma_1}j^0_m(\bu_{m,\tau};\bv_{\tau})\d S&\geq 	\langle\f_m,\bv\rangle, \ \text{ for all }\ \bv\in \V,\\
					d(\bu_m,q)&=0,\ \text{ for all }\ q\in Q. 
				\end{aligned}\right.
			\end{equation}
		\end{problem}
		
	To measure the consistency between the given problem data and the theoretical framework, we impose the following hypothesis on the superpotential.
	\begin{hypothesis}\label{hyp-sup-jm-sup}
		\begin{itemize}
			\item[(H5)] If $\boldsymbol{\xi}_m\to\boldsymbol{\xi}$  and $\boldsymbol{\eta}_m\to\boldsymbol{\eta}$ in $\R^d$, then
			\begin{align}
				\limsup_{m\to\infty}j_m^0(\boldsymbol{\xi}_m;\boldsymbol{\eta}_m)\leq j^0(\boldsymbol{\xi};\boldsymbol{\eta}). 
			\end{align}
		\end{itemize}
	\end{hypothesis}
		Refer to \cite[Example 2.4]{CFWH1} for an example of a superpotential that satisfies Hypotheses \ref{hyp-sup-j}, \ref{hyp-sup-jm}, and \ref{hyp-sup-jm-sup}. To ensure the existence of a solution to the perturbed problem, an additional condition on the force density is necessary. For a given constant 
		$m_0>0$, define a subset $\V_{m_0}^*\subset\V^*$ by
		\begin{align*}
			\V_{m_0}^*=\left\{\f\in\V^*:\|\f\|_{\V^*}\leq m_0\right\}.
		\end{align*}
		Similar to \eqref{eqn-value-k}, we define 
			\begin{align}\label{eqn-value-km0} K_{m_0}=\frac{1}{\left(2\mu-k_1\lambda_0^{-1}\right)}\left(m_0+k_0|\Gamma_1|^{1/2}\lambda_0^{-1/2}\right)^2+ |\kappa|^{\frac{r+1}{r-q}},
		\end{align}
		Then we have the following result. 
		\begin{theorem}\label{thm-conv}
			Let Hypotheses \ref{hyp-sup-j} and \ref{hyp-sup-jm} be satisfied. Then under assumptions of Theorem \ref{thm-unique}, Problem \ref{prob-hemi-1} has a unique solution $(\bu, p)\in\V\times Q$, Problem \ref{prob-hemi-approx} has a unique solution $(\bu_m,p_m) \in\V\times Q$, and
\begin{align}\label{eqn-bound-discrete}
	\|\bu\|_{\V}^2+	\|\bu\|_{\L^{r+1}}^{r+1}\leq \widetilde{K}_{m_0} \ \text{ and }\ \|\bu_m\|_{\V}^2+	\|\bu_m\|_{\L^{r+1}}^{r+1}\leq \widetilde{K}_{m_0},
\end{align}
where $\widetilde{K}_{m_0}=2\max\left\{\frac{1}{\left(2\mu-k_1\lambda_0^{-1}\right)},\frac{1}{\beta}\right\} K_{m_0}$ and $K_{m_0}$ is defined in \eqref{eqn-value-km0}. 

Moreover, if Hypothesis \ref{hyp-sup-jm-sup}  is satisfied and $\|\f_m-\f\|_{\V^*}\to 0$, then 
\begin{align*}
	\bu_m\to \bu \ \text{ in }\ \V\ \text{ and }\ p_m\to p\ \text{ in }\ Q. 
\end{align*}
		\end{theorem}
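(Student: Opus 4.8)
The plan is to proceed in the order: well-posedness and uniform bounds, weak compactness, upgrade to strong convergence in $\V$, identification of the limit through uniqueness, and finally pressure convergence via the inf-sup condition. First, since $j_m$ satisfies Hypothesis~\ref{hyp-sup-jm} with $k_{1,m}\le k_1<2\mu\lambda_0$ and $\delta_{1,m}\le\delta_1$, the smallness condition \eqref{eqn-cond} and the uniqueness conditions of Theorem~\ref{thm-unique} hold uniformly in $m$; hence Theorems~\ref{thm-main-hemi}, \ref{thm-unique} and \ref{thm-equivalent} give the unique solutions $(\bu,p)$ and $(\bu_m,p_m)$. As $\f_m\to\f$ in $\V^*$, the family $\{\f_m\}\cup\{\f\}$ is bounded, so I fix $m_0$ with $\|\f\|_{\V^*},\|\f_m\|_{\V^*}\le m_0$, and Proposition~\ref{prop-energy-est} yields \eqref{eqn-bound-discrete} via \eqref{eqn-value-km0}. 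Consequently $\{\bu_m\}$ is bounded in $\V\cap\L^{r+1}(\mathcal{O})$, so along a subsequence $\bu_m\rightharpoonup\bu^*$ weakly in $\V$ and in $\L^{r+1}(\mathcal{O})$ with $\bu^*\in\V_{\sigma}\cap\L^{r+1}(\mathcal{O})$; the compact embedding $\V\hookrightarrow\H$ and the compact trace $\H^1(\mathcal{O})\hookrightarrow\L^2(\Gamma)$ give strong convergence in $\H$, in $\mathbb{L}^4(\mathcal{O})$ (Rellich--Kondrachov, $d\le 4$) and in $\L^2(\Gamma_1)$, and, refining once more, a.e.\ convergence in $\mathcal{O}$ and on $\Gamma_1$. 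Interpolating between $\H$ and $\L^{r+1}(\mathcal{O})$ gives strong convergence in $\mathbb{L}^{q+1}(\mathcal{O})$ as well, since $q<r$.

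For strong convergence in $\V$, I test the divergence-free reduction of \eqref{eqn-hemi-approx} (test functions in $\V_{\sigma}$) with $\bv=\bu^*-\bu_m$. Using $a(\bw,\bw)=2\|\bw\|_{\V}^2$ from \eqref{eqn-a-est-2}, $b(\bu_m,\bu_m,\bu_m)=0$ from \eqref{eqn-b-est-4}, and writing $a(\bu_m,\bu_m-\bu^*)=2\|\bu_m-\bu^*\|_{\V}^2+a(\bu^*,\bu_m-\bu^*)$, I obtain
\[
2\mu\|\bu_m-\bu^*\|_{\V}^2+\mu a(\bu^*,\bu_m-\bu^*)\le R_m,
\]
where $R_m$ gathers the convective, Darcy, absorption, friction and force contributions. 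Here $\mu a(\bu^*,\bu_m-\bu^*)\to0$ by weak convergence; $b(\bu_m,\bu_m,\bu^*-\bu_m)\to0$ and $\alpha a_0(\bu_m,\bu^*-\bu_m)\to0$ by the strong $\mathbb{L}^4$ and $\H$ convergence; $\kappa c_0(\bu_m,\bu^*-\bu_m)\to0$ by strong $\mathbb{L}^{q+1}$ convergence; and $\langle\f_m,\bu^*-\bu_m\rangle\to0$ since $\f_m\to\f$ in $\V^*$ while $\bu^*-\bu_m\rightharpoonup0$. The monotonicity \eqref{2.23} gives $c(\bu_m,\bu_m-\bu^*)\ge c(\bu^*,\bu_m-\bu^*)\to0$, so $\limsup_m\beta c(\bu_m,\bu^*-\bu_m)\le0$; and the growth bound (H3) together with \eqref{eqn-trace} controls the friction term by $\big(k_0|\Gamma_1|^{1/2}+k_1\|\bu_{m,\tau}\|_{\L^2(\Gamma_1)}\big)\|(\bu^*-\bu_m)_\tau\|_{\L^2(\Gamma_1)}\to0$. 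Hence $\limsup_m 2\mu\|\bu_m-\bu^*\|_{\V}^2\le0$, so $\bu_m\to\bu^*$ strongly in $\V$ and in $\L^{r+1}(\mathcal{O})$.

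With strong convergence available, every term of \eqref{eqn-hemi-approx} passes to the limit: the absorption term is handled by the Brezis--Lions Lemma~\ref{Lem-Lions} ($|\bu_m|^{r-1}\bu_m$ is bounded in $\L^{(r+1)/r}(\mathcal{O})$ and converges a.e.), while for the friction term I invoke Hypothesis~\ref{hyp-sup-jm-sup} (H5) and, using the equi-integrable majorant from (H3) on the strongly convergent traces, a reverse Fatou (Vitali) argument yields $\limsup_m\int_{\Gamma_1}j_m^0(\bu_{m,\tau};\bv_\tau)\,\d S\le\int_{\Gamma_1}j^0(\bu^*_\tau;\bv_\tau)\,\d S$. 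Thus $\bu^*$ solves Problem~\ref{prob-hemi-2} with data $(\f,j)$, and by the uniqueness of Theorem~\ref{thm-unique} we get $\bu^*=\bu$; since the limit is subsequence-independent, the whole sequence converges. For the pressure, I note that for $\bv\in\V_0$ the trace $\bv_\tau=0$ on $\Gamma_1$, so the friction term vanishes and testing with $\pm\bv$ turns both hemivariational inequalities into equalities on $\V_0$. Subtracting expresses $d(\bv,p_m-p)$ as $\langle\f_m-\f,\bv\rangle+\mu a(\bu-\bu_m,\bv)+[b(\bu,\bu,\bv)-b(\bu_m,\bu_m,\bv)]+\alpha a_0(\bu-\bu_m,\bv)+\beta[c(\bu,\bv)-c(\bu_m,\bv)]+\kappa[c_0(\bu,\bv)-c_0(\bu_m,\bv)]$; each term is bounded by $\|\bv\|_{\V}$ times a null sequence, using the strong convergence $\bu_m\to\bu$ in $\V\cap\L^{r+1}(\mathcal{O})$ and the continuity of $\bw\mapsto|\bw|^{r-1}\bw$ from $\L^{r+1}$ into $\L^{(r+1)/r}$. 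The inf-sup condition \eqref{eqn-inf-sup} then forces $\vartheta\|p_m-p\|_{\mathrm{L}^2}\to0$.

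The principal obstacle is the limit-identification step, where the external force and the superpotential are perturbed simultaneously: the frictional functional must be passed to the limit purely through the compatibility hypothesis (H5) combined with the reverse-Fatou estimate anchored on the growth bound (H3), since no monotonicity of $j_m$ is available, and the fast-growing absorption nonlinearity demands the Brezis--Lions lemma rather than a continuity argument. The accompanying delicate point is securing the strong $\V$-convergence of the third step using only \emph{relaxed} monotonicity of the friction term; this is resolved by leaning on the coercivity \eqref{eqn-a-est-2} of $a$, the genuine monotonicity \eqref{2.23} of the Forchheimer term, and the compactness of the trace operator.
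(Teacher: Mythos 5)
Your proposal is correct, and while the global skeleton coincides with the paper's (uniform bounds via Proposition \ref{prop-energy-est}, weak compactness plus compact embeddings and traces, Brezis--Lions Lemma \ref{Lem-Lions} for the absorption terms, identification by uniqueness from Theorem \ref{thm-unique}, pressure via the inf-sup condition \eqref{eqn-inf-sup}), the mechanics of the two decisive steps differ genuinely. The paper first proves weak convergence and identifies the limit in the \emph{mixed} formulation (Problem \ref{prob-hemi-1}, using the weak pressure limit and Hypothesis \ref{hyp-sup-jm-sup}), and only then obtains strong convergence by combining the two hemivariational inequalities for $\bu$ and $\bu_m$, which produces the friction sum $j^0_m(\bu_{m,\tau};\bu_\tau-\bu_{m,\tau})+j^0(\bu_\tau;\bu_{m,\tau}-\bu_\tau)$ handled through (H5) and upper semicontinuity in \eqref{eqn-con-36}--\eqref{eqn-con-37}, and requires absorbing the $c_0$- and convective contributions into the Forchheimer and viscous terms by Young's inequality with the explicit constants $\varrho_{1,r},\varrho_2$ (culminating in the quantitative estimate \eqref{eqn-con-35}). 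You instead prove strong $\V$-convergence \emph{first}, from the single perturbed inequality tested at $\bu^*-\bu_m$ in the divergence-free class: the friction term is killed outright by the uniform growth bound \eqref{eqn-j0-est} together with the compact trace, the Forchheimer term is discarded by the monotonicity \eqref{2.23}, and the convective/Darcy/$c_0$ terms vanish by the strong $\mathbb{L}^4$, $\H$, $\L^{q+1}$ convergences; identification then happens with strong convergence already in hand, in the reduced Problem \ref{prob-hemi-2}, so no weak pressure limit is needed and (H5) enters only once, in the reverse-Fatou passage you justify explicitly (the paper uses this limsup interchange in \eqref{eqn-con-18} without comment). What each route buys: yours is shorter and structurally cleaner -- it avoids the Young-absorption machinery and the double use of (H5), and it isolates exactly which hypotheses drive which step ((H3) plus trace compactness for strong convergence, (H5) only for identification); the paper's route yields the explicit stability inequality \eqref{eqn-con-35}, which quantifies $\|\bu_m-\bu\|_{\V}$ against $\|\f_m-\f\|_{\V^*}$ and the friction residuals and so has independent interest, and, since its strong-convergence step is run against the already-identified limit $\bu$, it avoids rerunning the subsequence--subsubsequence argument that your order requires at the end. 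Your closing pressure argument (equality on $\V_0$ by testing with $\pm\bv$, then \eqref{eqn-inf-sup}) is exactly the paper's \eqref{eqn-con-1}--\eqref{eqn-con-7} passage.

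One cosmetic remark: you derive the bound \eqref{eqn-bound-discrete} by choosing $m_0$ to dominate the convergent family $\{\f_m\}$, whereas in the theorem the estimate is asserted under the standing assumption $\f,\f_m\in\V^*_{m_0}$ for a \emph{given} $m_0$, independently of the convergence hypothesis, which appears only in the ``Moreover'' clause; the estimate itself is the same application of Proposition \ref{prop-energy-est} with \eqref{eqn-value-km0}, so this affects only the logical bookkeeping, not the mathematics. Your observation that $k_{1,m}\le k_1$ and $\delta_{1,m}\le\delta_1$ make the smallness and uniqueness conditions hold uniformly in $m$ is a point the paper leaves implicit and is worth stating as you did.
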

		
		\begin{proof}
			According to Theorem \ref{thm-equivalent}, under the given assumptions, Problems \ref{prob-hemi-1} and \ref{prob-hemi-approx} each admit a unique solution, denoted by $(\bu, p) \in \V \times Q$ and $(\bu_m, p_m) \in \V \times Q$, respectively. Now, suppose that $\f_m \to \f$ in $\V^{*}$, and  assume Hypothesis \ref{hyp-sup-jm-sup}.  We divide the rest of the proof into the following steps:

			\vskip 0.2cm
			\noindent \textbf{Step 1:} Let us first show that the sequences $\{\|\bu_m\|_{\V}\}_{m\in\N}$ and $\{\|p_m\|_Q\}_{m\in\N}$ are bounded. For $\bv\in\V_0$, by following the standard procedure analogous to the derivation of Problem  \ref{prob-hemi-1}, equations \eqref{eqn-CBF}-\eqref{eqn-boundary-2} yield the following result: 
			\begin{align}\label{eqn-con-1}
					\mu a(\bu,\bv)+b(\bu,\bu,\bv)+\alpha a_0(\bu,\bv)+\beta c(\bu,\bv)+\kappa c_0(\bu,\bv)+d(\bv,p)=	\langle\f,\bv\rangle.
			\end{align}
			Analogously, we obtain the following from the perturbed problem: 
				\begin{align}\label{eqn-con-2}
				\mu a(\bu_m,\bv)+b(\bu_m,\bu_m,\bv)+\alpha a_0(\bu_m,\bv)+\beta c(\bu_m,\bv)+\kappa c_0(\bu_m,\bv)+d(\bv,p_m)=	\langle\f_m,\bv\rangle.
			\end{align}
			Subtracting \eqref{eqn-con-1} from \eqref{eqn-con-2}, we infer for all $\bv\in\V_0$ that 
			\begin{align}\label{eqn-con-3}
		d(\bv,p_m-p)&=\langle\f_m-\f,\bv\rangle+\mu a(\bu-\bu_m,\bv)+b(\bu,\bu,\bv)-b(\bu_m,\bu_m,\bv)\nonumber\\&\quad+\alpha a_0(\bu-\bu_m,\bv)+\beta [c(\bu,\bv)-c(\bu_m,\bv)]+\kappa [c_0(\bu,\bv)-c_0(\bu_m,\bv)].
		\end{align}
		We consider $b(\bu,\bu,\bv)-b(\bu_m,\bu_m,\bv)$ and estimate it using \eqref{eqn-b-est-1} as 
		\begin{align}\label{eqn-con-4}
			&b(\bu,\bu,\bv)-b(\bu_m,\bu_m,\bv)
			\nonumber\\&= b(\bu-\bu_m,\bu,\bv)+b(\bu_m,\bu-\bu_m,\bv)\nonumber\\&\leq C_b(\|\bu\|_{\V}+\|\bu_m\|_{\V})\|\bu-\bu_m\|_{\V}\|\bv\|_{\V}. 
		\end{align}
		Using Taylor's formula, we estimate $c(\bu,\bv)-c(\bu_m,\bv)$ as 
		\begin{align}\label{eqn-con-5}
			|c(\bu,\bv)-c(\bu_m,\bv)|&=\left|\left<\int_0^1\mathcal{C}'(\theta\bu+(1-\theta)\bu_m)(\bu-\bu_m)\d\theta,\bv\right>\right|\nonumber\\&\leq r(\|\bu\|_{\L^{r+1}}+\|\bu_m\|_{\L^{r+1}})^{r-1}\|\bu-\bu_m\|_{\L^{r+1}}\|\bv\|_{\L^{r+1}}\nonumber\\&\leq r C_s^2(\|\bu\|_{\L^{r+1}}+\|\bu_m\|_{\L^{r+1}})^{r-1}\|\bu-\bu_m\|_{\V}\|\bv\|_{\V}.
		\end{align}
		A similar calculation yields 
		\begin{align}\label{eqn-con-6}
				|c_0(\bu,\bv)-c_0(\bu_m,\bv)|&\leq qC_s^2|\mathcal{O}|^{\frac{r-q}{r+1}}(\|\bu\|_{\L^{r+1}}+\|\bu_m\|_{\L^{r+1}})^{q-1}\|\bu-\bu_m\|_{\V}\|\bv\|_{\V}.
		\end{align}
		Taking into account of estimates \eqref{eqn-con-4}-\eqref{eqn-con-6} and the inf-sup condition \eqref{eqn-inf-sup}, we obtain
		\begin{align}\label{eqn-con-7}
			\vartheta\|p_m-p\|_{Q}&\leq\sup_{\bv\in\V_0}\frac{d(\bv,p_m-p)}{\|\bv\|_{\V}}\nonumber\\&= \sup_{\bv\in\V_0}\frac{\begin{array}{l}\langle\f_m-\f,\bv\rangle+\mu a(\bu-\bu_m,\bv)+b(\bu,\bu,\bv)-b(\bu_m,\bu_m,\bv)\\+\alpha a_0(\bu-\bu_m,\bv)+\beta [c(\bu,\bv)-c(\bu_m,\bv)]+\kappa [c_0(\bu,\bv)-c_0(\bu_m,\bv)]\end{array}}{\|\bv\|_{\V}}\nonumber\\&\leq \|\f_m-\f\|_{\V^*}+(2\mu+C_k^2\alpha)\|\bu-\bu_m\|_{\V}+C_b(\|\bu\|_{\V}+\|\bu_m\|_{\V})\|\bu-\bu_m\|_{\V}\nonumber\\&\quad+r C_s^2(\|\bu\|_{\L^{r+1}}+\|\bu_m\|_{\L^{r+1}})^{r-1}\|\bu-\bu_m\|_{\V}\nonumber\\&\quad+qC_s^2|\mathcal{O}|^{\frac{r-q}{r+1}}(\|\bu\|_{\L^{r+1}}+\|\bu_m\|_{\L^{r+1}})^{q-1}\|\bu-\bu_m\|_{\V}.
		\end{align}
		Since $\{\|\bu_m\|_{\V}\}_{m\in\N}$,  $\{\|\bu_m\|_{\L^{r+1}}\}_{m\in\N}$ and  $\{\|\f_m\|_{\V}^*\}_{m\in\N}$ are bounded (see \eqref{eqn-bound-discrete}), it follows from \eqref{eqn-con-7} that $\{\|p_m\|_{Q}\}_{m\in\N}$ is also bounded. 
		
			\vskip 0.2cm
		\noindent \textbf{Step 2:} Let us now prove the following weak convergences: 
		\begin{align}\label{eqn-con-8}
			\bu_m\xrightarrow{w}\bu \ \text{ in }\ \V \ \text{ and }\ p_m\xrightarrow{w} p \ \text{ in }\ Q \ \text{ as }\ m\to\infty. 
		\end{align}
			Since the sequences $\{\|\bu_m\|_{\V}\}_{m\in\N}$ and $\{\|p_m\|_{Q}\}_{m\in\N}$  are uniformly bounded, an application of the Banach-Alaoglu theorem yields the existence of  subsequences of  $\{\|\bu_m\|_{\V}\}_{m\in\N}$ and $\{\|p_m\|_{Q}\}_{m\in\N}$ (still denoted by the same symbol), and two elements $\overline{\bu}\in\V$ and $\overline{p}\in Q$  such that 
			\begin{align}\label{eqn-con-9}
				\bu_m\xrightarrow{w}\overline{\bu} \ \text{ in }\ \V \ \text{ and }\ p_m\xrightarrow{w} \overline{p} \ \text{ in }\ Q \ \text{ as }\ m\to\infty. 
			\end{align}
			Owing to the compactness of the embedding $\mathcal{V} \hookrightarrow \mathcal{H}$, there exists a subsequence (still denoted by the same indices) such that
			
			\begin{align}\label{eqn-con-10}
	\bu_m \to \overline{\bu} \ \text{ strongly in } \ \mathcal{H},
				\end{align}
				and 
					\begin{align}\label{eqn-con-11}
					\bu_m(\x) \to \overline{\bu}(\x) \ \text{ for a.e. } \ \x\in\mathcal{O},
				\end{align}
				along an additional subsequence, which we continue to denote by the same symbol. 
		Moreover, the compact embedding $\mathcal{V} \hookrightarrow \L^2(\Gamma_1)$(see \cite[Theorem 6.2, Chapter 2]{JNe}) yields 
		\begin{align}\label{eqn-con-12}
	\mbox{$\bu_{m,\tau}\to \overline{\bu}_{\tau}$ \ strongly in \ $\L^2(\Gamma_1)$. }
		\end{align}
		By passing to a subsequence (not relabeled), we also have 
			\begin{align}\label{eqn-con-13}
		\mbox{$\bu_{m,\tau} \to \overline{\bu}_{\tau}$ \ a.e. on \ $\Gamma_1$.}
	\end{align}
	Let us now consider
	\begin{align}\label{eqn-con-14}
	&	|b(\bu_m,\bu_m,\bv)-b(\overline{\bu},\overline{\bu},\bv)|\nonumber\\&\leq	|b(\bu_m-\overline{\bu},\bu_m,\bv)|+|b(\overline{\bu},\bu_m-\overline{\bu},\bv)|\nonumber\\&\leq C_k^2C_g^2\|\bu_m-\overline{\bu}\|_{\H}^{1-\frac{d}{4}}\|\bu_m-\overline{\bu}\|_{\V}^{\frac{d}{4}}\|\bu_m\|_{\V}\|\bv\|_{\V}+|b(\overline{\bu},\bu_m-\overline{\bu},\bv)|\nonumber\\&\to 0\ \text{ as }\ m\to\infty, 
	\end{align}
where we have used \eqref{eqn-b-est-1}, \eqref{eqn-con-9} and \eqref{eqn-con-10}. The convergence \eqref{eqn-con-11} implies
	\begin{align}\label{eqn-con-15}
	|\bu_m(\x) |^{r-1}\bu_m(\x)\to |\overline{\bu}(\x) |^{r-1}\overline{\bu}(\x)\ \text{ for a.e. } \ \x\in\mathcal{O}.
\end{align}
Since $\||\bu_m |^{r-1}\bu_m\|_{\L^{\frac{r+1}{r}}}=\|\bu_m\|_{\L^{r+1}}^r\leq C,$ by applying  Lemma \ref{Lem-Lions}, we infer 
\begin{align}\label{eqn-con-16}
	\mathcal{C}(\bu_m)\xrightarrow{w}	\mathcal{C}(\overline{\bu})\ \text{ in }\ \L^{\frac{r+1}{r}}. 
\end{align}
A similar calculation yields 
\begin{align}\label{eqn-con-17}
	\mathcal{C}_0(\bu_m)\xrightarrow{w}	\mathcal{C}_0(\overline{\bu})\ \text{ in }\ \L^{\frac{r+1}{r}}. 
\end{align}
	Taking limit supremum in \eqref{eqn-hemi-approx} and then using \eqref{eqn-con-9}, \eqref{eqn-con-10}, \eqref{eqn-con-14}, \eqref{eqn-con-16} and \eqref{eqn-con-17},  we find 
	\begin{align}\label{eqn-con-18}
		\langle\f,\bv\rangle&\leq 	\mu a(\overline{\bu},\bv)+b(\overline{\bu},\overline{\bu},\bv)+\alpha a_0(\overline{\bu},\bv)+\beta c(\overline{\bu},\bv)+\kappa c_0(\overline{\bu},\bv)+d(\bv,\overline{p})\nonumber\\&\quad+\limsup_{m\to\infty}\int_{\Gamma_1}j^0_m(\bu_{m,\tau};\bv_{\tau})\d S\nonumber\\&\leq 	\mu a(\overline{\bu},\bv)+b(\overline{\bu},\overline{\bu},\bv)+\alpha a_0(\overline{\bu},\bv)+\beta c(\overline{\bu},\bv)+\kappa c_0(\overline{\bu},\bv)+d(\bv,\overline{p})\nonumber\\&\quad+\int_{\Gamma_1}\limsup_{m\to\infty}j^0_m(\bu_{m,\tau};\bv_{\tau})\d S.
	\end{align}
	The convergence \eqref{eqn-con-13} and Hypothesis \ref{hyp-sup-jm-sup} (H5) yield 
	\begin{align}\label{eqn-con-19}
		\limsup_{m\to\infty}j^0_m(\bu_{m,\tau};\bv_{\tau})\leq j^0(\overline{\bu}_{\tau};\bv_{\tau}). 
	\end{align}
	Using \eqref{eqn-con-19} in \eqref{eqn-con-18}, we derive for all $\bv\in\V$
	\begin{align}\label{eqn-con-20}
			\langle\f,\bv\rangle&\leq \mu a(\overline{\bu},\bv)+b(\overline{\bu},\overline{\bu},\bv)+\alpha a_0(\overline{\bu},\bv)+\beta c(\overline{\bu},\bv)+\kappa c_0(\overline{\bu},\bv)+d(\bv,\overline{p})\nonumber\\&\quad+\int_{\Gamma_1} j^0(\overline{\bu}_{\tau};\bv_{\tau})\d S. 
		\end{align}
		Letting $m\to\infty$ in the second equation in \eqref{eqn-hemi-approx}, we get 
		\begin{align}\label{eqn-con-21}
				d(\overline{\bu},q)=0\ \text{ for all }\ q\in Q. 
		\end{align}
	Therefore, from \eqref{eqn-con-19}-\eqref{eqn-con-20}, we conclude that $(\overline{\bu}, \overline{p}) \in \V \times Q$ is a solution of Problem \ref{prob-hemi-1}. By the uniqueness of the solution guaranteed by Theorem \ref{thm-unique}, it follows that $\overline{\bu} = \bu$ and $\overline{p} = p$. Consequently, every subsequence of $\{(\bu_m, p_m)\}_{m \in \mathbb{N}}$ that converges weakly in $\V \times Q$ must have the same limit. Hence, the entire sequence $\{(\bu_m, p_m)\}_{m \in \mathbb{N}}$ converges weakly in $\mathcal{V} \times Q$ to $(\bu, p)$ as $m \to \infty$.
	
		\vskip 0.2cm
	\noindent \textbf{Step 3:} It is only left to prove the strong convergence of the sequence $\{(\bu_m, p_m)\}_{m \in \mathbb{N}}$. Once again, without loss of generality, we may assume that 
$\bu_m\to\bu$ a.e. on $\Gamma_1$ for the sequence of solutions $\{\bu_m\}_{m\in\N}$.  Subtracting \eqref{eqn-hemi-1} from  \eqref{eqn-hemi-approx}, we infer for all $\bv\in\V$ that 
\begin{align}\label{eqn-con-22}
	&	\mu a(\bu_m-\bu,\bv)+b(\bu_m,\bu_m,\bv)-b(\bu,\bu,\bv)+\alpha a_0(\bu_m-\bu,\bv)+\beta [c(\bu_m,\bv)-c(\bu,\bv)]\nonumber\\&\quad+\kappa [c_0(\bu_m,\bv)-c_0(\bu,\bv)]+d(\bv,p_m)-d(\bv,p)+\int_{\Gamma_1}[j^0_m(\bu_{m,\tau};\bv_{\tau})+j^0(\bu_{\tau};-\bv_{\tau})]\d S\nonumber\\&\geq 	\langle\f_m-\f,\bv\rangle. 
\end{align}
For  $\bv=\bu-\bu_m$, using the second equations in \eqref{eqn-hemi-1} and \eqref{eqn-hemi-approx}, we find 
\begin{align}\label{eqn-con-23}
	d(\bu-\bu_m,p_m)-d(\bu-\bu_m,p)=0. 
\end{align}
 Taking $\bv=\bu-\bu_m$ in \eqref{eqn-con-22},  using \eqref{eqn-a-est-2}, \eqref{eqn-b-est-4}  and \eqref{eqn-con-23}, we deduce 
 \begin{align}\label{eqn-con-24}
 	&2\mu\|\bu_m-\bu\|_{\V}^2+\alpha\|\bu_m-\bu\|_{\H}^2+\beta [c(\bu_m,\bu_m-\bu)-c(\bu,\bu_m-\bu)]
 	\nonumber\\&=\mu a(\bu_m-\bu,\bu_m-\bu) +\alpha a_0(\bu_m-\bu,\bu_m-\bu)+\beta [c(\bu_m,\bu_m-\bu)-c(\bu,\bu_m-\bu)]\nonumber\\&\leq \int_{\Gamma_1}[j^0_m(\bu_{m,\tau};\bu_{\tau}-\bu_{m,\tau})+j^0(\bu_{\tau};\bu_{m,\tau}-\bu_{\tau})]\d S\nonumber\\&\quad+b(\bu,\bu,\bu_m-\bu)-b(\bu_m,\bu_m,\bu_m-\bu)\nonumber\\&\quad+ \kappa [c_0(\bu,\bu_m-\bu)-c_0(\bu_m,\bu_m-\bu)]+\langle\f_m-\f,\bu_m-\bu\rangle. 
 \end{align}
 Using \eqref{2.23}, we estimate $\beta [c(\bu_m,\bu_m-\bu)-c(\bu,\bu_m-\bu)]$ as 
 \begin{align}\label{eqn-con-25}
 	\beta [c(\bu_m,\bu_m-\bu)-c(\bu,\bu_m-\bu)]&\geq \frac{\beta}{2}\||\bu_m|^{\frac{r-1}{2}}(\bu_m-\bu)\|_{\H}^2+\frac{\beta}{2}\||\bu|^{\frac{r-1}{2}}(\bu_m-\bu)\|_{\H}^2\nonumber\\&\geq \frac{\beta}{2^{r-1}}\|\bu_m-\bu\|_{\L^{r+1}}^{r+1}.
 \end{align}
 An application of  Taylor's formula (\cite[Theorem 7.9.1]{PGC}) yields 
 \begin{align}\label{eqn-con-26}
 &	|	\kappa||\langle\mathcal{C}_0(\bu_m)-\mathcal{C}_0(\bu),\bu_m-\bu\rangle|\nonumber\\&= |\kappa|\bigg|\bigg<\int_0^1\mathcal{C}_0^{\prime}(\theta\bu_m+(1-\theta)\bu)\d\theta(\bu_m-\bu),(\bu_m-\bu)\bigg>\bigg|\nonumber\\&\leq |\kappa| q2^{q-1}\bigg<\int_0^1|\theta\bu_m+(1-\theta)\bu|^{q-1}\d\theta|\bu_m-\bu|,|\bu_m-\bu|\bigg>\nonumber\\&\leq |\kappa|q2^{q-1}\left<\left(|\bu_m|^{q-1}+|\bu|^{q-1}\right)|\bu_m-\bu|,|\bu_m-\bu|\right>\nonumber\\&=   |\kappa| q2^{q-1}\||\bu_m|^{\frac{q-1}{2}}(\bu_m-\bu)\|_{\H}^2+ |\kappa| q2^{q-1}\||\bu|^{\frac{q-1}{2}}(\bu_m-\bu)\|_{\H}^2. 
 \end{align}
 Using H\"older's inequality, we estimate the term $ |\kappa| q2^{q-1}\||\bu_m|^{\frac{q-1}{2}}(\bu_m-\bu)\|_{\H}^2$ as 
 \begin{align}\label{eqn-con-27}
 	& |\kappa| q2^{q-1}\||\bu_m|^{\frac{q-1}{2}}(\bu_m-\bu)\|_{\H}^2\nonumber\\&= |\kappa| q2^{q-1}\int_{\mathcal{O}}|\bu_m(\x)|^{q-1}|\bu_m(\x)-\bu(\x)|^2\d\x \nonumber\\&= |\kappa| q2^{q-1}\int_{\mathcal{O}}|\bu_m(\x)|^{q-1}|\bu_m(\x)-\bu(\x)|^{\frac{2(q-1)}{r-1}}|\bu_m(\x)-\bu(\x)|^{\frac{2(r-q)}{r-1}}\d\x \nonumber\\&\leq |\kappa| q2^{q-1}\bigg(\int_{\mathcal{O}}|\bu_m(\x)|^{r-1}|\bu_m(\x)-\bu(\x)|^2\d\x\bigg)^{\frac{q-1}{r-1}}\bigg(\int_{\mathcal{O}}|\bu_m(\x)-\bu(\x)|^2\d\x\bigg)^{\frac{r-q}{r-1}}\nonumber\\&\leq\frac{\beta}{4}\int_{\mathcal{O}}|\bu_m(\x)|^{r-1}|\bu_m(\x)-\bu(\x)|^2\d\x+\varrho_{1,r}\int_{\mathcal{O}}|\bu_m(\x)-\bu(\x)|^2\d\x,
 \end{align}
 where 
 \begin{align}\label{eqn-con-28}
 	\varrho_{1,r}=\left(\frac{r-q}{r-1}\right)\left(\frac{4(q-1)}{\beta(r-1)}\right)^{\frac{q-1}{r-q}}\left( |\kappa| q2^{q-1}\right)^{\frac{r-1}{r-q}}. 
 \end{align}
 A similar calculation yields 
 \begin{align}\label{eqn-con-29}
 	& |\kappa| q2^{q-1}\||\bu|^{\frac{q-1}{2}}(\bu_m-\bu)\|_{\H}^2\nonumber\\&\leq\frac{\beta}{4}\int_{\mathcal{O}}|\bu(\x)|^{r-1}|\bu_m(\x)-\bu(\x)|^2\d\x+\varrho_{1,r}\int_{\mathcal{O}}|\bu_m(\x)-\bu(\x)|^2\d\x.
 \end{align}
 Using \eqref{eqn-con-27} and \eqref{eqn-con-29} in \eqref{eqn-con-26}, we deduce 
 \begin{align}\label{eqn-con-31}
 	|\kappa||\langle\mathcal{C}_0(\bu_m)-\mathcal{C}_0(\bu),\bu_m-\bu\rangle|&\leq\frac{\beta}{4}\||\bu_m|^{\frac{r-1}{2}}(\bu_m-\bu)\|_{\L^2}^2+\frac{\beta}{4}\||\bu|^{\frac{r-1}{2}}(\bu_m-\bu)\|_{\L^2}^2\nonumber\\&\quad+2\varrho_{1,r}\|\bu_m-\bu\|_{\H}^2. 
 \end{align}
 Let us now consider the term $|b(\bu,\bu,\bu_m-\bu)-b(\bu_m,\bu_m,\bu_m-\bu)|$ and estimate it using \eqref{eqn-b-est-3}, \eqref{eqn-b-est-4},  H\"older's, Korn's, Gagliardo-Nirenberg's and  Young's inequalities as 
 \begin{align}\label{eqn-con-32}
 	&|b(\bu,\bu,\bu_m-\bu)-b(\bu_m,\bu_m,\bu_m-\bu)|\nonumber\\&\leq  |b(\bu-\bu_m,\bu,\bu_m-\bu)|+|b(\bu_m,\bu-\bu_m,\bu_m-\bu)|\nonumber\\&= |b(\bu-\bu_m,\bu_m-\bu,\bu)|\nonumber\\&\leq   \|\nabla(\bu_m-\bu)\|_{\H}\|\bu_m-\bu\|_{\L^4}\|\bu\|_{\L^4}
 	\nonumber\\&\leq C_gC_k\|\bu_m-\bu\|_{\V}\|\bu_m-\bu\|_{\H}^{1-\frac{d}{4}}\|\bu_m-\bu\|_{\H^1}^{\frac{d}{4}}\|\bu\|_{\L^4}
 	\nonumber\\&\leq C_gC_k\|\bu_m-\bu\|_{\H}^{1-\frac{d}{4}}\|\bu_m-\bu\|_{\V}^{1+\frac{d}{4}}\|\bu\|_{\L^4}\nonumber\\&\leq \frac{\mu}{2}\|\bu_m-\bu\|_{\V}^2+\varrho_2\|\bu\|_{\L^4}^{\frac{8}{4-d}}\|\bu_m-\bu\|_{\H}^2,
 \end{align}
 	where 
 \begin{align}\label{eqn-con-33}
 	\varrho_2=C_g^{\frac{8}{4-d}}C_k^{\frac{8}{4-d}}\left(\frac{8}{4-d}\right)\left(\frac{4+d}{4\mu}\right)^{\frac{4+d}{4-d}}.
 \end{align}
 The Cauchy-Schwarz and Young's inequalities yield 
 \begin{align}\label{eqn-con-34} 
 	|\langle\f_m-\f,\bu_m-\bu\rangle|\leq\|\f_m-\f\|_{\V^*}\|\bu_m-\bu\|_{\V}\leq\frac{\mu}{2}\|\bu_m-\bu\|_{\V}^2+\frac{1}{2\mu}\|\f_m-\f\|_{\V^*}^2. 
 \end{align}
 Combining \eqref{eqn-con-25}, \eqref{eqn-con-31}, \eqref{eqn-con-32} and \eqref{eqn-con-34}, and substituting the resultant  in \eqref{eqn-con-24}, we deduce 
 \begin{align}\label{eqn-con-35}
 &	\mu\|\bu_m-\bu\|_{\V}^2+\alpha\|\bu_m-\bu\|_{\H}^2+\frac{\beta}{2^r}\|\bu_m-\bu\|_{\L^{r+1}}^{r+1}\nonumber\\&\leq \int_{\Gamma_1}[j^0_m(\bu_{m,\tau};\bu_{\tau}-\bu_{m,\tau})+j^0(\bu_{\tau};\bu_{m,\tau}-\bu_{\tau})]\d S\nonumber\\&\quad+\left(2\varrho_{1,r}+\varrho_2\|\bu\|_{\L^4}^{\frac{8}{4-d}}\right)\|\bu_m-\bu\|_{\H}^2+\frac{1}{2\mu}\|\f_m-\f\|_{\V^*}^2. 
 \end{align}
 Let us now apply Hypotheses \ref{hyp-sup-jm-sup} and \ref{hyp-sup-j}, and Proposition \ref{prop-sub-diff} (i) to find 
 \begin{align}\label{eqn-con-36}
 	\limsup_{m\to\infty} \int_{\Gamma_1}j^0_m(\bu_{m,\tau};\bu_{\tau}-\bu_{m,\tau})\d S&\leq  	\int_{\Gamma_1}\limsup_{m\to\infty} j^0_m(\bu_{m,\tau};\bu_{\tau}-\bu_{m,\tau})\d S\nonumber\\&\leq \int_{\Gamma_1}j^0(\bu_{\tau};\bu_{\tau}-\bu_{\tau})\d S=\int_{\Gamma_1}j^0(\bu_{\tau};\boldsymbol{0})\d S=0. 
 \end{align}
 Similarly, applying Hypothesis  \ref{hyp-sup-j} and Proposition \ref{prop-sub-diff} (ii), we get 
  \begin{align}\label{eqn-con-37}
 	\limsup_{m\to\infty} \int_{\Gamma_1}j^0(\bu_{m,\tau};\bu_{\tau}-\bu_{m,\tau})\d S&\leq  	\int_{\Gamma_1}\limsup_{m\to\infty} j^0(\bu_{m,\tau};\bu_{\tau}-\bu_{m,\tau})\d S\nonumber\\&\leq \int_{\Gamma_1}j^0(\bu_{\tau};\bu_{\tau}-\bu_{\tau})\d S=\int_{\Gamma_1}j^0(\bu_{\tau};\boldsymbol{0})\d S=0. 
 \end{align}
 Since $\f_m\to\f$ in $\V^*$ and $\bu_m\to\bu$ in $\H$ (see \eqref{eqn-con-10}), taking limit supremum on both sides of \eqref{eqn-con-35}, we immediately have 
 \begin{align*}
 	\limsup_{m\to\infty}\|\bu_m-\bu\|_{\V}\leq 0,
 	\end{align*}
 	so that 
 	 \begin{align*}
 		\lim_{m\to\infty}\|\bu_m-\bu\|_{\V}\leq 0,
 	\end{align*}
 	that is, $\bu_m\to\bu$ in $\V$. Passing limit as $m\to\infty$ in \eqref{eqn-con-7}, we deduce $p_m\to p$ in $Q$, which completes the proof.  
	\end{proof}
	If we consider perturbations in the external force density only, then we obtain a similar result for the following problem: 
		\begin{problem}\label{prob-hemi-approx-1}
		Find $\bu_m\in\V$ and $p_m\in Q$  such that
		\begin{equation}\label{eqn-hemi-approx-1}
			\left\{
			\begin{aligned}
				\mu a(\bu_m,\bv)+b(\bu_m,\bu_m,\bv)+\alpha a_0(\bu_m,\bv)&+\beta c(\bu_m,\bv)+\kappa c_0(\bu_m,\bv)+d(\bv,p_m)\\+\int_{\Gamma_1}j^0(\bu_{m,\tau};\bv_{\tau})\d S&\geq 	\langle\f_m,\bv\rangle, \ \text{ for all }\ \bv\in \V,\\
				d(\bu_m,q)&=0,\ \text{ for all }\ q\in Q. 
			\end{aligned}\right.
		\end{equation}
	\end{problem}
	
	\begin{corollary}\label{cor-conver}
		Let Hypotheses \ref{hyp-sup-j}  be satisfied. Then under assumptions of Theorem \ref{thm-unique}, Problem \ref{prob-hemi-1} has a unique solution $(\bu, p)\in\V\times Q$, Problem \ref{prob-hemi-approx} has a unique solution $(\bu_m,p_m) \in\V\times Q$, and
		\begin{align}\label{eqn-bound-discrete-1}
			\|\bu\|_{\V}^2+	\|\bu\|_{\L^{r+1}}^{r+1}\leq \widetilde{K} \ \text{ and }\ \|\bu_m\|_{\V}^2+	\|\bu_m\|_{\L^{r+1}}^{r+1}\leq \widetilde{K},
		\end{align}
		where $\widetilde{K}=2\max\left\{\frac{1}{\left(2\mu-k_1\lambda_0^{-1}\right)},\frac{1}{\beta}\right\} K$ and $K$ is defined in \eqref{eqn-value-k}. 
		
		Furthermore, if  $\|\f_m-\f\|_{\V^*}\to 0$, then 
		\begin{align*}
			\bu_m\to \bu \ \text{ in }\ \V\ \text{ and }\ p_m\to p\ \text{ in }\ Q. 
		\end{align*}
	\end{corollary}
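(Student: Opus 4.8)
The plan is to recognize Corollary~\ref{cor-conver} as the special case of Theorem~\ref{thm-conv} obtained by leaving the superpotential unperturbed, namely $j_m \equiv j$ for every $m \in \N$, so that Problem~\ref{prob-hemi-approx-1} is precisely Problem~\ref{prob-hemi-approx} with this choice. The only work is then to verify that the hypotheses invoked in Theorem~\ref{thm-conv} hold automatically under this choice, after which the conclusion follows verbatim from that theorem.

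First I would record the well-posedness: since we work under the assumptions of Theorem~\ref{thm-unique} together with the Sobolev regime $1 \le r \le \frac{2d}{(d-2)^+}$, Theorem~\ref{thm-equivalent} yields the unique solutions $(\bu,p) \in \V \times Q$ of Problem~\ref{prob-hemi-1} and $(\bu_m,p_m) \in \V \times Q$ of Problem~\ref{prob-hemi-approx-1}. The a priori bounds \eqref{eqn-bound-discrete-1} are then a direct application of Proposition~\ref{prop-energy-est} to each solution (to $\bu$ with $\f$ and to $\bu_m$ with $\f_m$), with the constant $K$ from \eqref{eqn-value-k}; the hypothesis $\|\f_m-\f\|_{\V^*}\to 0$ guarantees that $\{\f_m\}_{m\in\N}$ is bounded in $\V^*$, so the estimate is uniform in $m$.

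Next I would check the two compatibility hypotheses of Theorem~\ref{thm-conv}. Since $j$ satisfies Hypothesis~\ref{hyp-sup-j}, the choice $j_m \equiv j$ satisfies Hypothesis~\ref{hyp-sup-jm} with $k_{0,m}=k_0$, $k_{1,m}=k_1$ and $\delta_{1,m}=\delta_1$, because (H1)--(H4) for $j_m$ are then literally (H1)--(H4) for $j$. For Hypothesis~\ref{hyp-sup-jm-sup} (H5), note that $j_m^0 = j^0$, so the required inequality $\limsup_{m\to\infty} j^0(\boldsymbol{\xi}_m;\boldsymbol{\eta}_m) \le j^0(\boldsymbol{\xi};\boldsymbol{\eta})$ whenever $\boldsymbol{\xi}_m\to\boldsymbol{\xi}$ and $\boldsymbol{\eta}_m\to\boldsymbol{\eta}$ in $\R^d$ is exactly the upper semicontinuity of the map $(\boldsymbol{\xi},\boldsymbol{\eta})\mapsto j^0(\boldsymbol{\xi};\boldsymbol{\eta})$ established in Proposition~\ref{prop-sub-diff} (ii). With all hypotheses of Theorem~\ref{thm-conv} in force, that theorem delivers $\bu_m\to\bu$ in $\V$ and $p_m\to p$ in $Q$ as soon as $\|\f_m-\f\|_{\V^*}\to 0$, which is the assertion.

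There is no genuine obstacle here beyond the verification of (H5); the entire content is that perturbing only the force is a degenerate instance of the joint perturbation already treated, and the delicate strong-convergence step (the passage to the limit in the frictional term via upper semicontinuity together with the relaxed-monotonicity estimate) has already been carried out in the proof of Theorem~\ref{thm-conv}. Alternatively, one could reproduce the three-step scheme of that proof, namely uniform bounds, weak convergence identified through the Brezis--Lions Lemma~\ref{Lem-Lions}, and strong convergence from \eqref{eqn-con-35}, directly with $j_m=j$, but this merely repeats the earlier argument.
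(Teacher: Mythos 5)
Your proposal is correct and matches the paper's intent exactly: the corollary is stated as the specialization $j_m\equiv j$ of Theorem~\ref{thm-conv} (applied to Problem~\ref{prob-hemi-approx-1}), and your verification that Hypothesis~\ref{hyp-sup-jm} holds trivially with $k_{0,m}=k_0$, $k_{1,m}=k_1$, $\delta_{1,m}=\delta_1$, and that (H5) of Hypothesis~\ref{hyp-sup-jm-sup} reduces to the upper semicontinuity of $(\boldsymbol{\xi},\boldsymbol{\eta})\mapsto j^0(\boldsymbol{\xi};\boldsymbol{\eta})$ from Proposition~\ref{prop-sub-diff}~(ii), is precisely the content the paper leaves implicit. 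Your observation that $\|\f_m-\f\|_{\V^*}\to 0$ supplies the uniform bound on $\{\f_m\}$ needed for the estimate \eqref{eqn-bound-discrete-1} via Proposition~\ref{prop-energy-est} is also the right reading of the (slightly loosely stated) constant $\widetilde{K}$.
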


	\section{An optimal control problem}\setcounter{equation}{0}\label{sec5} We consider the optimal control problem for the CBFeD hemivariational inequality \eqref{eqn-hemi-1}, where the external force density $\f$ taken as the control variable, belongs to the control space $\V^*$. Let \( \V_{\mathrm{ad}} \subset \V^*_{m_0} \) be the set of admissible controls, and let \( \mathcal{R} : \V \times Q \times \V^* \to \mathbb{R} \cup \{ +\infty \} \) be the objective functional, which has the following form:
	\begin{align*}
		\mathcal{R}(\f)=R(\bu(\f),p(\f),\f), 
	\end{align*}
	where \((\bu(\f), p(\f)) \in \V \times Q\) denotes the solution of Problem \ref{prob-hemi-1} corresponding to the control \(\f\). For notational simplicity, we represent the cost function as
	\begin{align*}
			\mathcal{R}(\f)=R(\bu,p,\f), 
	\end{align*}
	where $(\bu,p)=(\bu(\f),p(\f))$. The optimal control problem can then be formulated as
	\begin{align}\label{eqn-opt-1}
		\inf\left\{\mathcal{R}(\f):\f\in\V^*_{\mathrm{ad}}\right\}. 
	\end{align}
	Regarding problem \eqref{eqn-opt-1}, we impose the following assumptions on the control space and objective functional:
	\begin{hypothesis}\label{hyp-opt} 
		\begin{itemize}
		\item [$(H(\V_{\mathrm{ad}}^*))$] The set  \( \V_{\mathrm{ad}}^* \subset \V^*_{m_0} \) is a nonempty and compact subset of $\V^*$. 
		\item [$(H(R))$] The mapping $R : \V \times Q\times\V^*\to\R\cup\{+\infty\}$ is lower semicontinuous, that is, if $\bu_m\to\bu$ in $\V$, $p_n\to p$ in $Q$ and $\f_n\to\f$ in $\V^*$, then 
		\begin{align*}
			R(\bu,p,\f)\leq\liminf_{m\to\infty}R(\bu_m,p_m,\f_m). 
		\end{align*}
		\end{itemize}
	\end{hypothesis}

	We now establish the existence of solutions to the optimal control problem \eqref{eqn-opt-1}. 
	\begin{theorem}\label{thm-optimal-exis}
		Let Hypotheses \ref{hyp-sup-j} and \ref{hyp-opt} hold. Under the assumptions of Theorem \ref{thm-unique}, the optimal control problem \eqref{eqn-opt-1} has a solution.
	\end{theorem}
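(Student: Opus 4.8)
The plan is to use the direct method of the calculus of variations, exploiting the stability result (Corollary \ref{cor-conver}) to pass to the limit along a minimizing sequence. First I would observe that the admissible set $\V_{\mathrm{ad}}^*$ is nonempty by $(H(\V_{\mathrm{ad}}^*))$, and that for each $\f\in\V_{\mathrm{ad}}^*\subset\V_{m_0}^*$ the hypotheses of Theorem \ref{thm-unique} guarantee a unique solution $(\bu(\f),p(\f))\in\V\times Q$ of Problem \ref{prob-hemi-1}; hence $\mathcal{R}(\f)=R(\bu(\f),p(\f),\f)$ is well-defined on $\V_{\mathrm{ad}}^*$. Since $R$ takes values in $\R\cup\{+\infty\}$, the infimum $\iota:=\inf\{\mathcal{R}(\f):\f\in\V_{\mathrm{ad}}^*\}$ exists in $\R\cup\{+\infty\}$, and I would select a minimizing sequence $\{\f_n\}_{n\in\N}\subset\V_{\mathrm{ad}}^*$ with $\mathcal{R}(\f_n)\to\iota$ as $n\to\infty$.

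Next I would extract a convergent subsequence using the compactness of the admissible set. By $(H(\V_{\mathrm{ad}}^*))$, $\V_{\mathrm{ad}}^*$ is a compact subset of $\V^*$, so there is a subsequence (still denoted $\{\f_n\}$) and an element $\bar{\f}\in\V_{\mathrm{ad}}^*$ such that $\|\f_n-\bar{\f}\|_{\V^*}\to 0$. Writing $(\bu_n,p_n)=(\bu(\f_n),p(\f_n))$ for the associated states and $(\bar{\bu},\bar{p})=(\bu(\bar{\f}),p(\bar{\f}))$ for the state corresponding to $\bar{\f}$, the key step is to invoke Corollary \ref{cor-conver}: since $\bar{\f}\in\V_{m_0}^*$ and the assumptions of Theorem \ref{thm-unique} are in force, the strong convergence $\f_n\to\bar{\f}$ in $\V^*$ yields
\begin{align*}
	\bu_n\to\bar{\bu}\ \text{ in }\ \V\ \text{ and }\ p_n\to\bar{p}\ \text{ in }\ Q.
\end{align*}

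Finally I would combine these convergences with the lower semicontinuity of the cost. By $(H(R))$, since $\bu_n\to\bar{\bu}$ in $\V$, $p_n\to\bar{p}$ in $Q$, and $\f_n\to\bar{\f}$ in $\V^*$, we obtain
\begin{align*}
	\mathcal{R}(\bar{\f})=R(\bar{\bu},\bar{p},\bar{\f})\leq\liminf_{n\to\infty}R(\bu_n,p_n,\f_n)=\liminf_{n\to\infty}\mathcal{R}(\f_n)=\iota.
\end{align*}
Since $\bar{\f}\in\V_{\mathrm{ad}}^*$ forces $\mathcal{R}(\bar{\f})\geq\iota$ by definition of the infimum, it follows that $\mathcal{R}(\bar{\f})=\iota$, so $\bar{\f}$ is an optimal control and the infimum is attained. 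I expect the main conceptual obstacle to be ensuring that the continuity-of-the-control-to-state-map supplied by the stability theorem is applied correctly: one must verify that $\bar{\f}$ indeed lies in $\V_{m_0}^*$ (which is automatic since $\V_{\mathrm{ad}}^*\subset\V_{m_0}^*$) so that Corollary \ref{cor-conver} applies to the limiting control, and that the strong—rather than merely weak—convergence of the states is genuinely available, which is precisely what Corollary \ref{cor-conver} delivers and what the lower semicontinuity hypothesis $(H(R))$ is tailored to exploit.
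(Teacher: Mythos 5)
Your proposal is correct and follows essentially the same route as the paper's proof: a minimizing sequence in the compact set $\V_{\mathrm{ad}}^*$, extraction of a strongly convergent subsequence $\f_m\to\f^*$ in $\V^*$, application of Corollary \ref{cor-conver} to obtain strong convergence of the states $(\bu_m,p_m)\to(\bu^*,p^*)$ in $\V\times Q$, and the lower semicontinuity hypothesis $(H(R))$ to conclude that the infimum is attained at $\f^*$. Your additional remarks (well-definedness of $\mathcal{R}$ via Theorem \ref{thm-unique}, and that $\f^*\in\V_{m_0}^*$ follows from $\V_{\mathrm{ad}}^*\subset\V_{m_0}^*$) only make explicit what the paper leaves implicit.
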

	\begin{proof}
		Let $\{\f_m\}_{m\in\N}$ be a minimizing sequence for problem \eqref{eqn-opt-1}, that is, each 
		$\f_m\in\V_{\mathrm{ad}}^*$ and 
		\begin{align}\label{eqn-min-1}
			\lim\limits_{m\to\infty}\mathcal{R}(\f_m)=	\inf\left\{\mathcal{R}(\f):\f\in\V^*_{\mathrm{ad}}\right\}=:\mathcal{R}. 
		\end{align}
		Let \((\bu_m, p_m) \in \V \times Q\) be the unique solution to Problem \ref{prob-hemi-approx-1}. Since the sequence \(\{\f_m\}_{m\in\N}\) lies in \(\V^*_{\mathrm{ad}}\), a compact subset of \(\V^*\), there exists a subsequence (still denoted by \(\{\f_m\}_{m\in\N}\)) that converges strongly in \(\V^*\) to some \(\f^* \in \V^*_{\mathrm{ad}}\).  
			By Corollary \ref{cor-conver}, the corresponding solutions \((\bu_m, p_m)\) converge strongly to \((\bu^*, p^*)\) in \(\V \times Q\), where \((\bu^*, p^*)\) solves Problem \ref{prob-hemi-1} with \(\f =\f^*\). Under  Hypothesis \ref{hyp-opt} $(H(R))$, it follows that  
				\[  
		\mathcal{R} \leq \mathcal{S}(\bu^*, p^*, \f^*) \leq \liminf_{m\to \infty} \mathcal{S}(\bu_m, p_m, \f_m) = \mathcal{R}.  
		\]  
			This implies that \(\f^*\) is indeed an optimal control, solving problem  \eqref{eqn-opt-1}. 
	\end{proof}
	
	\begin{remark}
			Typical examples of the objective functional are given by:
		\begin{align}\label{eqn-cost-1}
			\mathcal{R}_1(\f)=\frac{\alpha_1}{2}\int_{\mathcal{O}}\|\bu(\x)-\bu_d(\x)\|^2_{\R^d}\d\x+\frac{\alpha_2}{2}\int_{\mathcal{O}}|p(\x)-p_d(\x)|^2\d\x+\frac{\alpha_3}{2}\int_{\mathcal{O}}\|\f(\x)\|_{\R^d}^2\d\x
		\end{align}
		or 
		\begin{align}\label{eqn-cost-2}
			\mathcal{R}_2(\f)=\frac{\alpha_1}{2}\int_{\mathcal{O}}\|\mathrm{curl \ }\bu(\x)\|^2_{\R^d}\d\x+\frac{\alpha_2}{2}\int_{\mathcal{O}}|p(\x)-p_d(\x)|^2\d\x+\frac{\alpha_3}{2}\int_{\mathcal{O}}\|\f(\x)\|_{\R^d}^2\d\x,
		\end{align}
		where the external force density $\f\in\H\subset\V^*$  is the control, $(\bu, p)$ is the unique solution of Problem \ref{prob-hemi-1} corresponding to $\f$.  
		The target states $\bu_d$ and $p_d$ represent the desired velocity and pressure distributions, respectively. The weighting coefficients $\alpha_1,\alpha_2\geq 0$  are fixed parameters satisfying $\alpha_1+\alpha_2=1$, with $\alpha_3>0$ serving as a regularization parameter. These constants govern the relative contributions of the three terms in the objective functional $\mathcal{R}(\f)$. When $\alpha_2=0$ in $\mathcal{R}_1(\f)$, the optimization problem reduces to pure velocity tracking, where the control acts exclusively to match the target velocity field. The regularization term ($\alpha_3>0$) imposes an energy constraint on the control forces, ensuring physically realizable solutions by maintaining finite actuation magnitudes. The control aims to determine an optimal force density that minimizes deviations between the actual and target velocity-pressure fields $(\bu,p)$ and $(\bu_d,p_d)$. When $\alpha_2=0$ in $\mathcal{R}_2(\f)$,  the term $\mathcal{R}_2(\f)$ acts as an enstrophy-minimizing functional, employing the quadratic vorticity penalty $\|\mathrm{curl \ }\bu\|_{\H}^2$ to regulate rotational kinetic energy. Hypothesis  \ref{hyp-opt} $(H(R))$ is verified by the specific choices  $\mathcal{R}_1(\f)$ and $\mathcal{R}_2(\f)$  of the objective functional. 
		\end{remark}
		
		\begin{remark}
		Hypothesis \ref{hyp-opt} ($H(\V_{\mathrm{ad}}^*)$) can be relaxed to the condition that $\V_{\mathrm{ad}}^* $ is nonempty when using the cost functionals from \eqref{eqn-cost-1} or \eqref{eqn-cost-2}. Here we are not even assuming $\V_{\mathrm{ad}}^* \subset\V_{m_0}^*$. Under the condition \eqref{eqn-min-1}, we obtain the estimates
		\begin{align}
			\mathcal{R} \leq \mathcal{R}(\f_m) \leq \mathcal{R} + \frac{1}{m},
		\end{align}
		along with the uniform bound
		\begin{align}
			\|\f_m\|_{\H} \leq C
		\end{align}
		for some constant $C>0$. Applying the Banach-Alaoglu theorem, we extract a subsequence of $\{\f_m\}_{m\in\N}$ (still denoted by the same symbol) that converges weakly to $\f$ in $\H$.
		
		In the proof of Theorem \ref{thm-conv} (Step 3), the convergence $\langle\f_m-\f,\bu_m-\bu\rangle \to 0$ follows directly from the strong convergence $\bu_m\to\bu$ in $\H$ (see \eqref{eqn-con-10}) and the fact that $\|\f_m\|_{\H} \leq C$, as shown by the equality
		\begin{align}
			\langle\f_m-\f,\bu_m-\bu\rangle &= (\f_m-\f,\bu_m-\bu)\nonumber\\&\leq\|\f_m-\f\|_{\H}\|\bu_m-\bu\|_{\H}\nonumber\\&\leq (\|\f_m\|_{\H}+\|\f\|_{\H})\|\bu_m-\bu\|_{\H} \to 0.
		\end{align}
		Therefore the convergence $\bu_m\to\bu$ in $\V$ also follows (see \eqref{eqn-con-35}).
	\end{remark}

	\section{Numerical approximation of the optimal control problem}\setcounter{equation}{0} \label{sec6}
	In this section, we investigate the numerical approximation of the optimal control problem \ref{eqn-opt-1}. We maintain all the assumptions given in Theorem \ref{thm-optimal-exis}.  Let \( h > 0 \) be a discretization parameter, and consider the finite-dimensional approximations
		\(
	\left\{\V_h, Q_h, \V^{*}_{\mathrm{ad},h}\right\}_{h}
	\)
		of the spaces \( \left( \V, Q, \V^{*}_{\mathrm{ad}} \right) \) as \( h \to 0 \).  Inspired from \cite[Section 4]{WWXCWH}, we impose the following assumptions on the discrete approximation spaces.
		
		\begin{hypothesis}\label{hyp-discrete}
			\begin{enumerate}
				\item [$(H(\V_h))$] The discrete subspace $\V_h \subset\V$ has the approximation property that any $\bv\in\V$ can be approximated by elements $\bv_h \in\V_h$ with $\|\bv_h-\bv\|_{\V}\to 0$ as $h\to 0$.
				\item [$(H(Q_h))$]   The discrete subspace $Q_h \subset Q$ has the approximation property that any $q\in Q$ can be approximated by elements $q_h \in Q_h$ with $\|q_h-q\|_{Q}\to 0$ as $h\to 0$.
				\item [$(H(\V_{\mathrm{ad},h}^*))$]  The discrete subspace \( \V_{\mathrm{ad},h}^* \subset \V^*_{m_0} \) is a nonempty and compact subset of $\V^*$. For any $\f\in\V_{\mathrm{ad}}^*$, there exists $\f_h \in \V_{\mathrm{ad},h}^*$ with $\|\f_h-\f\|_{\V^*}\to 0$ as $h\to 0$  and for any sequence $\{\f_h\}_h$, $\f_h\in \V_{\mathrm{ad},h}^*$, there exists a sequence $\{\f_h'\}_h\subset \V_{\mathrm{ad},h}^*$ with $\|\f_h-\f_h'\|_{\V^*}\to 0$ as $h\to 0$. 
			\end{enumerate}
		\end{hypothesis}
		Note that Hypothesis \ref{hyp-discrete}  $(H(\V_{\mathrm{ad},h}^*))$  is weaker than assuming that $\V_{\mathrm{ad},h}^*\subset\V_{\mathrm{ad}}^*$. Moreover, we have the following assumption on the discrete inf-sup condition. 
		\begin{hypothesis}\label{hyp-discrete-inf-sup}
			There exists a constant $\vartheta_1>0$ such that 
			\begin{align}\label{eqn-numer-1}
				\vartheta_1\|q_h\|_Q\leq \sup_{\bv\in\V_{0,h}}\frac{d(\bv_h,q_h)}{\|\bv_h\|_{\V}}\ \text{ for all }\ q_h\in Q_h,
			\end{align}
			where $\V_{0,h}:=\V_0\cap\V_h$. 
		\end{hypothesis}
		We remark that the condition \ref{eqn-numer-1}  is commonly referred to as the \emph{Babu\u{s}ka-Brezzi condition} in the literature.
		We now catalog commonly used finite element spaces that meet the Babu\u{s}ka-Brezzi requirements.
		Consider a polygonal domain (for \( d = 2 \)) or a polyhedral domain (for \( d = 3 \)). Let \( \{\mathcal{T}_h\}_h \) be a regular family of finite element partitions of \( \mathcal{O} \) into triangular (2D) or tetrahedral (3D) elements.  For an integer \( k \geq 0 \), let 
		 \( P_k(T) \) denote the space of polynomials of total degree \( \leq k \) on an element \( T \) and 
	 \( B(T) \) represents the space of bubble functions on \( T \).  
			Among the well-known stable finite element spaces, one classical choice is the \emph{Mini element} (\cite[Section 2]{DNAFB}), 
			\begin{align*}
				\V_h&=\left\{\bv_h\in\V_h\cap\mathrm{C}^0(\overline{\mathcal{O}})^d: \bv_h\big|_T\in[P_1(T)]^d\oplus[B(T)]^d\ \text{ for all }\ T\in\mathcal{T}_h\right\},\\
				Q_h&=\left\{q_h\in Q_h\cap\mathrm{C}^0(\overline{\mathcal{O}}): q_h\big|_T\in P_1(T)\ \text{ for all } \ T\in\mathcal{T}_h\right\},
			\end{align*}
			or $P_2/P_1 $ finite element pair (\cite[Section IV.4.2]{VGPR})
			\begin{align*}
			\V_h&=\left\{\bv_h\in\V_h\cap\mathrm{C}^0(\overline{\mathcal{O}})^d: \bv_h\big|_T\in[P_1(T)]^d\ \text{ for all }\ T\in\mathcal{T}_h\right\},\\
			Q_h&=\left\{q_h\in Q_h\cap\mathrm{C}^0(\overline{\mathcal{O}}): q_h\big|_T\in P_1(T)\ \text{ for all } \ T\in\mathcal{T}_h\right\},
				\end{align*}
			which satisfies the Babu\u{s}ka-Brezzi condition.  
			
			The verification of commonly used finite element spaces $(H(\V_h))$ and $(H(Q_h))$ follows a standard technique in finite element analysis, combining two key ingredients: 
			\begin{enumerate} \item [(1)] the density of smooth functions in the relevant function spaces ($\V$ or $Q$), where we note that $\V \cap \mathrm{C}^{\infty}(\overline{\mathcal{O}};\R^d)$ is dense in $\V$ (see \cite[Section 7.1]{WHMS2} and references therein) and the density of $\mathrm{C}^{\infty}(\overline{\mathcal{O}})$ in $\mathrm{L}^2(\mathcal{O})$ is a classical result from Sobolev space theory, which immediately implies the density of  $Q\cap \mathrm{C}^{\infty}(\overline{\mathcal{O}})$ in $Q$; and \item [(2)] standard error estimates for finite element interpolations of smooth functions, as found in fundamental finite element textbooks (\cite{}) and exemplified by the proof of \cite[Theorem 10.4.1]{AKWH}. 
			\end{enumerate}

			When approximating the control space, we note that the admissible set $\V_{\mathrm{ad}}^*$ lacks a vector space structure, meaning  $\V_{\mathrm{ad},h}^*$ is generally not contained within $\V_{\mathrm{ad}}^*$. Following established approaches for obstacle problems, we adopt assumption Hypothesis \ref{hyp-discrete} $(H(\V_{\mathrm{ad},h}^*))$. The specific approximation space for  $(H(\V_{\mathrm{ad},h}^*))$ depends on $\V_{\mathrm{ad}}^*$'s formulation, with the final property automatically satisfied if $\V_{\mathrm{ad},h}^*\subset \V_{\mathrm{ad}}^*$ for all $h\in(0,1)$. As discussed in \cite[Section 4]{WWXCWH}, to demonstrate this assumption's validity, consider an example where $\g_1, \g_2\in\L^{\infty}(\mathcal{O})$ define $ G(\x) :=\max\{|\g_1(\x)|,|\g_2(\x)|\}$ for a.e. $\x\in\mathcal{O}$, with $\|G\|_{\V^*}\leq C\|G\|_{\mathrm{L}^{\infty}}\leq m_0$. The admissible control set is then given by $$\V_{\mathrm{ad}}^* = \{ \f\in\H : \g_1\leq\f\leq\g_2\ \text{ a.e. in }\ \mathcal{O}\},$$ illustrating how these constraints manifest in practice.
			
			Based on the given definition and properties of \( \g_1 \) and \( \g_2 \), we conclude that \( \V_{\mathrm{ad}}^*\subset \V^*_{m_0} \). Since \( \V_{\mathrm{ad}}^* \) forms a closed, nonempty, convex subset of \( \H \), it is consequently nonempty and weakly closed in \( \H \) (see \cite[Section 3.3]{AKWH}). Furthermore, due to the compact embedding of \( \H \) into \( \V^* \),  Hypothesis \ref{hyp-discrete} $(H(\V_{\mathrm{ad},h}^*))$ is satisfied.  
			
			Let us define 
			\begin{align}
				\boldsymbol{U}_h=\left\{\f_h\in\H\cap\mathrm{C}^0(\overline{\mathcal{O}}):\f_h\big|_T\in P_1(T)\ \text{ for all }\ T\in \mathcal{T}_h\right\}.
			\end{align}
			Let $\mathcal{N}_h$ denote the set of nodes for the finite element spaces $\boldsymbol{U}_h$, and define
			\begin{align}
				\V_{\mathrm{ad},h}^*=\left\{\f_h\in\boldsymbol{U}_h: \text{ for all }\ \boldsymbol{b}\in\mathcal{N}_h,\  \g_1(\boldsymbol{b})\leq \f(\boldsymbol{b})\leq \g_2(\boldsymbol{b})\right\}. 
			\end{align}
			Consequently, $\V_{\mathrm{ad},h}^*\subset \V_{m_0}^*$ forms a nonempty, compact subset of $\V$, and by \cite[Theorem 5.1.2]{PGC1}, Hypothesis \ref{hyp-discrete} $(H(\V_{\mathrm{ad},h}^*))$ is satisfied.  We now present the discrete formulation of Problem \ref{prob-hemi-1}. 
			\begin{problem}\label{prob-hemi-discrete}
				Given $\f_h\in	\V_{\mathrm{ad},h}^*$, find $\bu_h\in\V_h$ and $p_h\in Q_h$  such that
				\begin{equation}\label{eqn-hemi-discrete}
					\left\{
					\begin{aligned}
						\mu a(\bu_h,\bv_h)+b(\bu_h,\bu_h,\bv_h)+\alpha a_0(\bu_h,\bv_h)&+\beta c(\bu_h,\bv_h)+\kappa c_0(\bu_h,\bv_h)+d(\bv_h,p_h)\\+\int_{\Gamma_1}j^0(\bu_{h,\tau};\bv_{h,\tau})\d S&\geq 	\langle\f_h,\bv_h\rangle, \ \text{ for all }\ \bv_h\in \V_h,\\
						d(\bu_h,q_h)&=0,\ \text{ for all }\ q_h\in Q_h. 
					\end{aligned}\right.
				\end{equation}
			\end{problem}
			An existence, uniqueness and boundedness result holds for Problem \ref{eqn-hemi-discrete}, analogous to Theorem \ref{thm-unique}. This establishes the foundation for developing numerical approximations of the optimal control formulation \eqref{eqn-opt-1}, which is given by 
			\begin{align}\label{eqn-opt-2}
				\inf\left\{\mathcal{R}_h(\f_h):\f_h\in	\V_{\mathrm{ad},h}^* \right\},
			\end{align}
			where 
			\begin{align}
				\mathcal{R}_h(\f_h)=R(\bu_h,p_h,\f_h),
			\end{align}
			and $(\bu_h,p_h) = (\bu_h (\f_h) , p_h(\f_h))$ is the solution of Problem \ref{prob-hemi-discrete}. 
			
			Following the framework of the optimal control problem \eqref{eqn-opt-1}, the discrete formulation \eqref{eqn-opt-2}  admits a solution under the same set of assumptions: Hypotheses \ref{hyp-sup-j}, \ref{hyp-opt} $(H(R))$, \eqref{eqn-unique-con-1} or \eqref{eqn-unique-con-2}, along with the discrete stability conditions given in Hypotheses \ref{hyp-discrete} and \ref{hyp-discrete-inf-sup}.
				To establish the convergence properties of the discrete solution, we first demonstrate the continuous dependence result for Problem \ref{prob-hemi-discrete} with respect to control perturbations, specifically showing stability as $\f_h\to\f$ in $\V^*$ as $h\to 0$. 
				
				\begin{theorem}\label{thm-discrete-conv}
				Let 	Hypotheses \ref{hyp-sup-j}, \ref{hyp-opt} $(H(R))$, \eqref{eqn-unique-con-1} or \eqref{eqn-unique-con-2} and Hypotheses \ref{hyp-discrete}  be satisfied. Let $(\bu,p)\in\V\times Q$ and $(\bu_h, p_h)\in\V_h\times Q_h$ be the solutions of Problem \ref{prob-hemi-1} and Problem \ref{prob-hemi-discrete}, respectively.If $\f_h\to\f$ in $\V^*$, then 
				\begin{align}
					\|\bu_h-\bu\|_{\V}+\|p_h-p\|_Q\to 0\ \text{ as } \ h\to 0. 
					\end{align}
				\end{theorem}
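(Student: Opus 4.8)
The plan is to follow the three-step template of the proof of Theorem~\ref{thm-conv}, with the mesh parameter $h$ playing the role of the perturbation index and with two structural complications: every discrete test function must lie in $\V_h$ (resp.\ $Q_h$), and $\bu_h$ is only \emph{discretely} solenoidal, i.e.\ $d(\bu_h,q_h)=0$ merely for $q_h\in Q_h$. A fact used throughout is that the continuous solution is genuinely divergence free: since $\bu$ satisfies $d(\bu,q)=0$ for all $q\in Q$ and the boundary conditions force $\int_\Gamma\bu\cdot\n=0$, one has $\mathrm{div}\,\bu=0$ a.e., hence $d(\bu,q)=0$ for every $q\in\mathrm{L}^2(\mathcal{O})$. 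For \textbf{Step~1}, Problem~\ref{prob-hemi-discrete} satisfies the same energy estimate as Proposition~\ref{prop-energy-est} (whose proof only tests with $\bv_h=\bu_h\in\V_h$), giving a uniform bound on $\|\bu_h\|_{\V}^2+\|\bu_h\|_{\L^{r+1}}^{r+1}$; testing the $\V_{0,h}$-momentum equation (where the $j^0$-term drops since $\bv_{h,\tau}=0$) and invoking Hypothesis~\ref{hyp-discrete-inf-sup} bounds $\|p_h\|_Q$. By Banach--Alaoglu I extract a subsequence with $\bu_h\xrightarrow{w}\overline\bu$ in $\V$, $p_h\xrightarrow{w}\overline p$ in $Q$, and the compact embeddings $\V\hookrightarrow\H$ and $\V\hookrightarrow\L^2(\Gamma_1)$ yield $\bu_h\to\overline\bu$ strongly in $\H$, $\bu_{h,\tau}\to\overline\bu_\tau$ in $\L^2(\Gamma_1)$, with a.e.\ convergence along a further subsequence.

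For \textbf{Step~2}, to identify $(\overline\bu,\overline p)$ as a solution of Problem~\ref{prob-hemi-1}, I fix $\bv\in\V$ and choose $\bv_h\in\V_h$ with $\bv_h\to\bv$ by Hypothesis~\ref{hyp-discrete}~$(H(\V_h))$, insert $\bv_h$ into \eqref{eqn-hemi-discrete}, and pass to the $\limsup$. The linear and convective terms converge as in \eqref{eqn-con-14}, the monotone terms $c,c_0$ converge weakly by the Brezis--Lions Lemma~\ref{Lem-Lions} (as in \eqref{eqn-con-16}--\eqref{eqn-con-17}), the pressure pairing $d(\bv_h,p_h)\to d(\bv,\overline p)$, and the boundary term is handled by the upper semicontinuity of $j^0$ (Proposition~\ref{prop-sub-diff}(ii)) together with $\bu_{h,\tau}\to\overline\bu_\tau$, $\bv_{h,\tau}\to\bv_\tau$. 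The divergence constraint passes to the limit because $d(\bu_h,q_h)=0$ and $q_h\to q$ by $(H(Q_h))$. Uniqueness (Theorems~\ref{thm-unique}--\ref{thm-equivalent}) forces $\overline\bu=\bu$, $\overline p=p$, and the subsequence principle upgrades this to weak convergence of the full sequence.

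For \textbf{Step~3}, write $\Lambda(\bw;\bv):=\mu a(\bw,\bv)+b(\bw,\bw,\bv)+\alpha a_0(\bw,\bv)+\beta c(\bw,\bv)+\kappa c_0(\bw,\bv)$, pick $\bw_h\in\V_h$ with $\bw_h\to\bu$ in $\V$ (hence in $\L^{r+1}$ and $\bw_{h,\tau}\to\bu_\tau$ in $\L^2(\Gamma_1)$), and consider the coercive quantity
\[
\mathcal{E}_h:=2\mu\|\bu_h-\bu\|_{\V}^2+\alpha\|\bu_h-\bu\|_{\H}^2+\beta\bigl[c(\bu_h,\bu_h-\bu)-c(\bu,\bu_h-\bu)\bigr]\ge 2\mu\|\bu_h-\bu\|_{\V}^2,
\]
the lower bound coming from \eqref{eqn-a-est-2} and the monotonicity \eqref{2.23}. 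Testing \eqref{eqn-hemi-1} with $\bv=\bu_h-\bu\in\V$ and \eqref{eqn-hemi-discrete} with $\bv_h=\bw_h-\bu_h\in\V_h$, then using linearity of $\Lambda(\bw;\cdot)$ in its test slot and \eqref{eqn-b-est-3}--\eqref{eqn-b-est-4}, I bound $\mathcal{E}_h$ above by a sum of terms each of which is controlled: the force pairing $\langle\f_h,\bu_h-\bw_h\rangle-\langle\f,\bu_h-\bu\rangle\to0$ (as $\f_h\to\f$ in $\V^*$, $\bu_h\xrightarrow{w}\bu$, $\bw_h\to\bu$); the residual $\Lambda(\bu_h;\bw_h-\bu)\to0$ from the uniform bounds; the pressure terms $d(\bw_h-\bu_h,p_h)=d(\bw_h-\bu,p_h)\to0$ and $d(\bu_h-\bu,p)=d(\bu_h,p-q_h)\to0$ (using $d(\bu_h,p_h)=0$, $d(\bu,\cdot)\equiv0$, boundedness of $p_h$, and $q_h\to p$); the convective remainder $b(\bu,\bu,\bu_h-\bu)-b(\bu_h,\bu_h,\bu_h-\bu)\to0$ since $\|\bu_h-\bu\|_{\L^4}\to0$ by Gagliardo--Nirenberg and the strong $\H$-convergence; and the two boundary integrals, whose $\limsup$ are $\le0$ by the upper semicontinuity of $j^0$ exactly as in \eqref{eqn-con-36}--\eqref{eqn-con-37}. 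The sign-indefinite term $\kappa[c_0(\bu_h,\bu_h-\bu)-c_0(\bu,\bu_h-\bu)]$ is estimated as in \eqref{eqn-con-26}--\eqref{eqn-con-31} and absorbed into the monotone $\beta c$-part of $\mathcal{E}_h$, the leftover $\H$-term vanishing since $\bu_h\to\bu$ in $\H$. Hence $\limsup_{h\to0}2\mu\|\bu_h-\bu\|_{\V}^2\le0$, i.e.\ $\bu_h\to\bu$ in $\V$. For the pressure, I subtract the $\V_{0,h}$-momentum equations, bound the right-hand side by $\|\f_h-\f\|_{\V^*}+C\|\bu_h-\bu\|_{\V}(1+\|\bu_h\|_{\V}+\|\bu\|_{\V}+\cdots)\to0$, and apply \eqref{eqn-numer-1} to $p_h-q_h\in Q_h$ with $q_h\to p$, concluding $\|p_h-p\|_Q\to0$ by the triangle inequality.

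The crux is \textbf{Step~3}. Because the natural test function $\bu_h-\bu$ does not belong to $\V_h$, the argument must be routed through the approximant $\bw_h$, and every error term it generates must be tracked; simultaneously, the only-discretely-solenoidal nature of $\bu_h$ prevents the pressure couplings from cancelling algebraically, so their vanishing has to be extracted from $(H(Q_h))$ together with the genuine incompressibility of $\bu$. The nonmonotone convective term and the sign-indefinite $\kappa c_0$ term are handled by the same coercivity bookkeeping as in the uniqueness proof, but they become harmless only because the strong $\H$-convergence established in Step~2 annihilates the resulting lower-order quantities.
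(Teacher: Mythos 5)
Your proposal is correct, and it uses the same toolbox as the paper's proof — discrete energy bound plus the Babu\u{s}ka--Brezzi condition for the pressure bound, Banach--Alaoglu with the compact embeddings $\V\hookrightarrow\H$ and $\V\hookrightarrow\L^2(\Gamma_1)$, approximants supplied by $(H(\V_h))$ and $(H(Q_h))$, the coercivity/monotonicity device \eqref{eqn-a-est-2}, \eqref{2.23} routed through a $\V_h$-approximant, and the discrete inf-sup \eqref{eqn-numer-1} for $p_h\to p$ — but it orders the two main steps differently. You identify the weak limit $(\overline{\bu},\overline{p})$ as the solution of Problem \ref{prob-hemi-1} \emph{first}, passing to the limsup in \eqref{eqn-hemi-discrete} under weak convergence only, which forces you to invoke the Brezis--Lions Lemma \ref{Lem-Lions} for the $c,c_0$ terms and the upper semicontinuity of $j^0$, exactly as in the paper's continuous stability proof (Theorem \ref{thm-conv}, Step 2); only then do you prove strong $\V$-convergence to the identified limit. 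The paper instead proves strong convergence $\bu_h\to\overline{\bu}$ in $\V$ \emph{before} identification (its Step 4), testing the discrete HVI with $\overline{\bu}_h-\bu_h$ where $\overline{\bu}_h\to\overline{\bu}$; with strong convergence in hand, its identification step needs no Brezis--Lions — Taylor/Lipschitz estimates on $c,c_0$ suffice — and the boundary term in the strong-convergence step is killed by the growth bound \eqref{eqn-j0-est} together with $\|\overline{\bu}_{h,\tau}-\bu_{h,\tau}\|_{\L^2(\Gamma_1)}\to 0$, rather than by the usc argument \eqref{eqn-con-36}--\eqref{eqn-con-37} you reuse. A second minor variant: you absorb the sign-indefinite $\kappa c_0$ difference into the monotone $\beta c$ part via \eqref{eqn-con-26}--\eqref{eqn-con-31}, whereas the paper estimates $c_0(\bu_h,\overline{\bu}-\bu_h)$ directly by interpolating $\L^{q+1}$ between $\L^{r+1}$ and $\H$ (using $q<r$) and letting strong $\H$-convergence do the work. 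Both routes are valid under the stated hypotheses; your ordering recycles the Theorem \ref{thm-conv} argument almost verbatim at the cost of heavier weak-convergence machinery at the identification stage, while the paper's ordering makes identification nearly mechanical once strong convergence is secured. Your treatment of the two genuinely discrete obstructions — routing the test function through $\bw_h\in\V_h$, and replacing the lost pressure cancellations by $d(\bu_h,p-q_h)\to 0$ and $d(\bw_h-\bu_h,p_h)=d(\bw_h-\bu,p_h)\to 0$, justified by the genuine solenoidality of $\bu$ — matches the paper's, as does the concluding pressure argument via \eqref{eqn-numer-1} and the triangle inequality.
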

				\begin{proof}
					We establish the proof through several steps.
					\vskip 0.2cm
					\noindent\textbf{Step 1:} Let us first establish uniform boundedness of the discrete solutions. Since  $\V_{\mathrm{ad},h}^*\subset \V_{m_0}^*$, the sequence of discrete controls $\{\f_h\}$ is bounded in $\V^*$. Consequently, by the discrete counterpart of Proposition \ref{prop-energy-est}, the corresponding solutions 	$\{\bu_h\}$  remain uniformly bounded in $\V\cap\L^{r+1}$. Since $1\leq r\leq \frac{2d}{d-2}$, by Sobolev's embedding, we infer $\V\subset\L^{r+1}$ and hence $\V\cap\L^{r+1}=\V$. Similar to \eqref{eqn-con-1}, we have  for all $\bv_h\in\V_{h,0}$ that 
					\begin{align}\label{eqn-numer-0}
							\mu a(\bu_h,\bv_h)+b(\bu_h,\bu_h,\bv_h)&+\alpha a_0(\bu_h,\bv_h)+\beta c(\bu_h,\bv_h)+\kappa c_0(\bu_h,\bv_h)\nonumber\\+d(\bv_h,p_h)&=\langle\f_h,\bv_h\rangle. 
					\end{align}
					By the Babu\u{s}ka-Brezzi condition \eqref{eqn-numer-1}, we find 
					\begin{align}
						\vartheta_1\|p_h\|_Q&\leq \sup_{\bv\in\V_{0,h}}\frac{d(\bv_h,p_h)}{\|\bv_h\|_{\V}}\nonumber\\& =  \sup_{\bv\in\V_{0,h}}\frac{\begin{array}{l}\langle\f_h,\bv_h\rangle-[	\mu a(\bu_h,\bv_h)+b(\bu_h,\bu_h,\bv_h)+\alpha a_0(\bu_h,\bv_h)\\ \quad+\beta c(\bu_h,\bv_h)+\kappa c_0(\bu_h,\bv_h)]\end{array}}{\|\bv_h\|_{\V}}\nonumber\\&\leq \left(\|\f_h\|_{\V^*}+(2\mu+\alpha C_k^2)\|\bu_h\|_{\V}+C_b\|\bu_h\|_{\V}^2+[\beta C_s+|\kappa|C_s|\mathcal{O}|^{\frac{r-q}{r+1}}]\|\bu_h\|_{\L^{r+1}}^r \right),
					\end{align}
					where we have used the estimates \eqref{eqn-equiv-1}, \eqref{eqn-a-est-1}, \eqref{eqn-b-est-1} and \eqref{eqn-c-est}.  Since the sequences $\{\|\bu_h\|_{\V}\}_h$ and $\{\|\f_h\|_{\V^*}\}_h$ are uniformly  bounded, $\{\|p_h\|_Q\}_h$ is also bounded. 
					
					\vskip 0.2cm
					\noindent\textbf{Step 2:} Since the sequences $\{\|\bu_h\|_{\V}\}_h$ and $\{\|p_h\|_Q\}_h$ are bounded, an application of the Banach-Alaoglu theorem guarantees the existence of subsequences of $\{\|\bu_h\|_{\V}\}_h$ and $\{\|p_h\|_Q\}_h$ (not relabeled)  and elements $\overline{\bu}\in\V$ and $\overline{p}\in Q$ such that 
					\begin{align}\label{eqn-nume-1}
						\bu_h\xrightarrow{w}\overline{\bu}\ \text{ in }\ \V\ \text{ and }\ p_h\xrightarrow{w}\overline{p}\ \text{ in }\ Q \ \text{ as }\ h\to 0. 
					\end{align}
					Since the embedding $\V\subset\H$ is compact, we have the following strong convergence:
					\begin{align}\label{eqn-nume-1-1}
						\bu_h\to \overline{\bu}\ \text{ in }\ \H. 
					\end{align}
					Along a further subsequence (still denoted by the same symbol), we obtain 
					\begin{align}
						\bu_h(\x)\to\overline{\bu}(\x)\ \text{ for a.e. }\ \x\in\H. 
					\end{align}
					
					\vskip 0.2cm
					\noindent\textbf{Step 3:} Our next aim is to show that 
					\begin{align}\label{eqn-nume-2}
						d(\overline{\bu},q)=0\ \text{ for all }\ q\in Q. 
					\end{align}
					For an arbitrarily fixed $q\in Q$, from Hypothesis \ref{hyp-discrete} $(H(Q_h))$, we infer the existence of a $q_h\in Q_h$ such that 
					\begin{align}\label{eqn-nume-3}
						q_h\to q\ \text{ in } \ Q.
					\end{align}
					The second condition in \eqref{eqn-hemi-discrete} gives
				\begin{align}\label{eqn-nume-4}
					d(\bu_h,q_h)=0. 
				\end{align}
				Using the bilinearity of $d(\cdot,\cdot)$, we write 
				\begin{align}\label{eqn-nume-5}
					d(\bu_h,q_h)&=d(\bu_h-\overline{\bu},q_h)+d(\overline{\bu},q_h)\nonumber\\&=d(\bu_h-\overline{\bu},q_h-q)+d(\bu_h-\overline{\bu},q)+d(\overline{\bu},q_h). 
				\end{align}
				Since every weakly convergent sequences are bounded, we know that $\|\bu_h-\bu\|_{\V}$ is bounded and using the fact that $\|q_h-q\|_Q\to 0$ as $h\to 0$, we deduce 
				\begin{align}\label{eqn-nume-6}
					|d(\bu_h-\overline{\bu},q_h-q)|\leq C_k\|\bu_h-\overline{\bu}\|_{\V}\|q_h-q\|_{Q}\to 0\ \text{ as } h\to 0. 
				\end{align}
				Since 	$\bu_h\xrightarrow{w}\overline{\bu}\ \text{ in }\ \V$ (see \eqref{eqn-nume-2}), we immediately have 
				\begin{align}\label{eqn-nume-7}
					d(\bu_h-\overline{\bu},q)\to 0\ \text{ as }\ h\to 0. 
				\end{align}
				Similarly,  the convergence $q_h\to q\ \text{ in } \ Q$ (see \eqref{eqn-nume-3}) implies 
				\begin{align}\label{eqn-nume-8}
					d(\overline{\bu},q_h)\to d(\overline{\bu},q)\ \text{ as }\ h\to 0. 
				\end{align}
				Taking limit $h\to 0$ in \eqref{eqn-nume-5} and then using \eqref{eqn-nume-6}-\eqref{eqn-nume-8}, one can deduce \eqref{eqn-nume-2}. 
				
					\vskip 0.2cm
				\noindent\textbf{Step 4:} Let us now prove the strong convergence 
				\begin{align}\label{eqn-nume-9}
					\bu_h\to\overline{\bu}\ \text{ in }\ \V\ \text{ as }\ h\to 0. 
				\end{align}
				Using Hypothesis \ref{hyp-discrete} $(H(\V_h))$ and $(H(Q_h))$, we infer the existence of sequences $\overline{\bu}_h\in\V_h$ and $\overline{p}_h\in Q_h$ such that 
				\begin{align}\label{eqn-nume-10}
					\overline{\bu}_h\to\overline{\bu} \ \text{ in }\ \V \ \text{ and }\  \overline{p}_h\to\overline{p}\ \text{ in }\ Q \text{ as }\ h\to 0. 
				\end{align}
				The equality \eqref{eqn-a-est-2} and  the estimate \eqref{2.23} lead to 
				\begin{align}\label{eqn-nume-11}
					&2\mu\|\overline{\bu}-\bu_h\|_{\V}^2+\alpha\|\overline{\bu}-\bu_h\|_{\H}^2+\frac{\beta}{2}\||\overline{\bu}|^{\frac{r-1}{2}}(\overline{\bu}-\bu_h)\|_{\H}^2+\frac{\beta}{2}\||{\bu}_h|^{\frac{r-1}{2}}(\overline{\bu}-\bu_h)\|_{\H}^2\nonumber\\&\leq \mu a(\overline{\bu}-\bu_h,\overline{\bu}-\bu_h)+\alpha a_0(\overline{\bu}-\bu_h,\overline{\bu}-\bu_h) +\beta[c(\overline{\bu},\overline{\bu}-\bu_h)-c({\bu}_h,\overline{\bu}-\bu_h)]\nonumber\\&= \mu a(\overline{\bu},\overline{\bu}-\bu_h)-\mu a(\bu_h,\overline{\bu}-\overline{\bu}_h)-\mu a(\bu_h,\overline{\bu}_h-{\bu}_h)\nonumber\\&\quad+\alpha a_0(\overline{\bu},\overline{\bu}-\bu_h)-\alpha a_0(\bu_h,\overline{\bu}-\overline{\bu}_h)-\alpha a_0(\bu_h,\overline{\bu}_h-{\bu}_h)\nonumber\\&\quad+\beta c(\overline{\bu},\overline{\bu}-\bu_h)-\beta c({\bu}_h,\overline{\bu}-\overline{\bu}_h)-\beta c({\bu}_h,\overline{\bu}_h-\bu_h).
				\end{align}
				Taking $\bv_h=\overline{\bu}_h-{\bu}_h$ in \eqref{eqn-hemi-discrete}, we find 
				\begin{align}\label{eqn-nume-12}
					-&\mu a(\bu_h,\overline{\bu}_h-{\bu}_h)-\alpha a_0(\bu_h,\overline{\bu}_h-{\bu}_h) - \beta c(\bu_h,\overline{\bu}_h-{\bu}_h)\nonumber\\&\leq b(\bu_h,\bu_h,\overline{\bu}_h-{\bu}_h)+\kappa c_0(\bu_h,\overline{\bu}_h-{\bu}_h)+d(\overline{\bu}_h-{\bu}_h,p_h)\nonumber\\&\quad+\int_{\Gamma_1}j^0(\bu_{h,\tau};\overline{\bu}_{h,\tau}-{\bu}_{h,\tau})\d S -\langle\f_h,\overline{\bu}_h-{\bu}_h\rangle. 
				\end{align}
				Utilizing \eqref{eqn-nume-12} in \eqref{eqn-nume-11}, we get 
					\begin{align}\label{eqn-nume-13}
				&2\mu\|\overline{\bu}-\bu_h\|_{\V}^2+\alpha\|\overline{\bu}-\bu_h\|_{\H}^2+\frac{\beta}{2}\||\overline{\bu}|^{\frac{r-1}{2}}(\overline{\bu}-\bu_h)\|_{\H}^2+\frac{\beta}{2}\||{\bu}_h|^{\frac{r-1}{2}}(\overline{\bu}-\bu_h)\|_{\H}^2\nonumber\\&\leq  \mu a(\overline{\bu},\overline{\bu}-\bu_h)-\mu a(\bu_h,\overline{\bu}-\overline{\bu}_h) +\alpha a_0(\overline{\bu},\overline{\bu}-\bu_h)-\alpha a_0(\bu_h,\overline{\bu}-\overline{\bu}_h)\nonumber\\&\quad +\beta c(\overline{\bu},\overline{\bu}-\bu_h)-\beta c({\bu}_h,\overline{\bu}-\overline{\bu}_h) +b(\bu_h,\bu_h,\overline{\bu}_h-{\bu}_h)\nonumber\\&\quad+\kappa c_0(\bu_h,\overline{\bu}_h-{\bu}_h)+d(\overline{\bu}_h-{\bu}_h,p_h)+\int_{\Gamma_1}j^0(\bu_{h,\tau};\overline{\bu}_{h,\tau}-{\bu}_{h,\tau})\d S\nonumber\\&\quad-\langle\f_h,\overline{\bu}_h-{\bu}_h\rangle. 
					\end{align}
					Next, we consider each term on the right side of \eqref{eqn-nume-13}. Note that  $\bu_h\xrightarrow{w}\overline{\bu}$ in $\V$, $\bu_h\to\overline{\bu}$ in $\H$  (see \eqref{eqn-nume-1} and \eqref{eqn-nume-1-1}) and Sobolev's embedding yield 
					\begin{align*}
						&a(\overline{\bu},\overline{\bu}-\bu_h)\to 0,\ a_0(\overline{\bu},\overline{\bu}-\bu_h)\to 0 \ \text{ and }\ c(\overline{\bu},\overline{\bu}-\bu_h)\to 0 \ \text{ as } \ h\to 0. 
					\end{align*}
					The convergence  $\|\overline{\bu}_h-\overline{\bu}\|_{\V}\to 0$ (see  \eqref{eqn-nume-10}), Korn's inequality and the fact that  $\|{\bu}_h\|_{\H}$ is bounded independent of $h$ yield
					\begin{align*}
						|a_0(\bu_h,\overline{\bu}-\overline{\bu}_h)|&\leq\|\bu_h\|_{\H}\|\overline{\bu}-\overline{\bu}_h\|_{\H}\leq C_k\|\bu_h\|_{\H}\|\overline{\bu}-\overline{\bu}_h\|_{\V}\to 0\ \text{ as }\ h\to 0. 
					\end{align*}
					Let us rewrite $b(\bu_h,\bu_h,\overline{\bu}_h-{\bu}_h)$ using \eqref{eqn-b-est-4} as 
					\begin{align*}
						b(\bu_h,\bu_h,\overline{\bu}_h-{\bu}_h)=b(\bu_h,\bu_h,\overline{\bu}_h-\overline{\bu})+b(\bu_h,\bu_h,\overline{\bu}). 
					\end{align*}
					Using the fact that $b(\overline{\bu},\overline{\bu},\overline{\bu})=0$,  $\|{\bu}_h\|_{\V}$ is bounded independent of $h,$ \eqref{eqn-b-est-1} and the convergences  \eqref{eqn-nume-1} and \eqref{eqn-nume-1-1}, we find 
					\begin{align*}
						|b(\bu_h,\bu_h,\overline{\bu})|&=|b(\bu_h,\bu_h,\overline{\bu})-b(\overline{\bu},\overline{\bu},\overline{\bu})|\nonumber\\&\leq |b(\bu_h-\overline{\bu},\bu_h,\overline{\bu})|+|b(\overline{\bu},\bu_h-\overline{\bu},\overline{\bu})|
\nonumber\\&\leq C_kC_g \|\bu_h-\overline{\bu}\|_{\H}^{1-\frac{d}{4}}\|\bu_h-\overline{\bu}\|_{\H^1}^{\frac{d}{4}}\|\bu_h\|_{\V}\|\overline{\bu}\|_{\L^4}+|b(\overline{\bu},\bu_h-\overline{\bu},\overline{\bu})|\nonumber\\&\to 0\ \text{ as }\ h\to0. 
						\end{align*}
						We rewrite $c_0(\bu_h,\overline{\bu}_h-{\bu}_h)$ as 
						\begin{align*}
							c_0(\bu_h,\overline{\bu}_h-{\bu}_h)=c_0(\bu_h,\overline{\bu}_h-\overline{\bu})+c_0(\bu_h,\overline{\bu}-{\bu}_h). 
						\end{align*}
						Since $\|\overline{\bu}_h-\overline{\bu}\|_{\V}\to 0$ (see \eqref{eqn-nume-10}) and $\|{\bu}_h\|_{\V}$ is bounded independent of $h$, using \eqref{eqn-a-est-1}, \eqref{eqn-b-est-1} and Sobolev's inequality,  we have as $h\to 0$
						\begin{align*}
							|a(\bu_h,\overline{\bu}-\overline{\bu}_h)|&\leq 2\|\bu_h\|_{\V}\|\overline{\bu}-\overline{\bu}_h\|_{\V}\to 0,\nonumber\\
								|a_0(\bu_h,\overline{\bu}-\overline{\bu}_h)|&\leq\|\bu_h\|_{\H}\|\overline{\bu}-\overline{\bu}_h\|_{\H}\leq C_k^2\|\bu_h\|_{\V}\|\overline{\bu}-\overline{\bu}_h\|_{\V}\to 0,\nonumber\\
							|b(\bu_h,\bu_h,\overline{\bu}_h-\overline{\bu})|&\leq C_b\|\bu_h\|_{\V}^2\|\overline{\bu}_h-\overline{\bu}\|_{\V}\to 0, \nonumber\\
							|c({\bu}_h,\overline{\bu}-\overline{\bu}_h)|&\leq \|\bu_h\|_{\L^{r+1}}^{r}\|\overline{\bu}-\overline{\bu}_h\|_{\L^{r+1}}\leq C_s^{r+1}\|\bu_h\|_{\V}^r\|\overline{\bu}-\overline{\bu}_h\|_{\V}\to 0, \nonumber\\
							|c_0({\bu}_h,\overline{\bu}-\overline{\bu}_h)|&\leq\|\bu_h\|_{\L^{q+1}}^{q}\|\overline{\bu}-\overline{\bu}_h\|_{\L^{q+1}}\leq |\mathcal{O}|^{\frac{r-q}{r+1}}C_s^{q+1}\|\bu_h\|_{\V}^q\|\overline{\bu}-\overline{\bu}_h\|_{\V}\to 0.
						\end{align*}
						We use the convergence \eqref{eqn-nume-1-1}, boundedness of $\{\bu_h\}$ in $\V$,  Sobolev's  and interpolation inequalities to estimate $|c_0(\bu_h,\overline{\bu}-{\bu}_h)|$ as 
						\begin{align*}
						&	|c_0(\bu_h,\overline{\bu}-{\bu}_h)|\nonumber\\&\leq\|\bu_h\|_{\L^{q+1}}^{q}\|\overline{\bu}-{\bu}_h\|_{\L^{q+1}}\nonumber\\&\leq |\mathcal{O}|^{\frac{q(r-q)}{(q+1)(r+1)}}\|\bu_h\|_{\L^{r+1}}^q\|\overline{\bu}-{\bu}_h\|_{\L^{r+1}}^{\frac{(r+1)(q-1)}{(r-1)(q+1)}}\|\overline{\bu}-{\bu}_h\|_{\H}^{\frac{2(r-q)}{(r-1)(q+1)}}\nonumber\\&\leq C_s^{q+\frac{(r+1)(q-1)}{(r-1)(q+1)}} |\mathcal{O}|^{\frac{q(r-q)}{(q+1)(r+1)}}\|\bu_h\|_{\V}^q\left(\|\overline{\bu}\|_{\V}^{\frac{(r+1)(q-1)}{(r-1)(q+1)}}+\|{\bu}_h\|_{\V}^{\frac{(r+1)(q-1)}{(r-1)(q+1)}}\right)\|\overline{\bu}-{\bu}_h\|_{\H}^{\frac{2(r-q)}{(r-1)(q+1)}}\nonumber\\&\to 0\ \text{ as }\ h\to 0. 
						\end{align*}
						We use \eqref{eqn-hemi-discrete}$_2$ and \eqref{eqn-nume-2} to rewrite $d(\overline{\bu}_h-{\bu}_h,p_h)$ as 
						\begin{align*}
							d(\overline{\bu}_h-{\bu}_h,p_h)&=d(\overline{\bu}_h,p_h)-d({\bu}_h,p_h)=d(\overline{\bu}_h,p_h)=d(\overline{\bu}_h-\overline{\bu},p_h)\nonumber\\&\leq C_k\|\overline{\bu}_h-\overline{\bu}\|_{\V}\|p_h\|_{Q}\to 0\ \text{ as }\ h\to 0,
						\end{align*}
						since $\|\overline{\bu}_h-\overline{\bu}\|_{\V}\to 0$ as $h\to 0$ (see \eqref{eqn-nume-10}) and $\{\|p_h\|_Q\}$ is bounded independent of $h$. 	Furthermore, the compact embedding $\mathcal{V} \hookrightarrow \L^2(\Gamma_1)$ (see \cite[Theorem 6.2, Chapter 2]{JNe}) provides  	
						\begin{align*}
							\mbox{$\bu_{h,\tau} \to \overline{\bu}_{\tau} \ \text{ in }\ \L^2(\Gamma_1)$ \  and \ $\overline{\bu}_{h,\tau} \to \overline{\bu}_{\tau} \ \text{ in }\ \L^2(\Gamma_1)$}
						\end{align*}
						 along subsequences still denoted by the same symbol. The above convergences immediately imply 
						\begin{align*}
							\|\overline{\bu}_{h,\tau}-\bu_{h,\tau}\|_{\L^2(\Gamma_1)}\leq 	\|\overline{\bu}_{h,\tau}-\overline{\bu}_{\tau}\|_{\L^2(\Gamma_1)}+\|\overline{\bu}_{\tau}-\bu_{h,\tau}\|_{\L^2(\Gamma_1)}\to 0\ \text{ as } \ h\to 0. 
						\end{align*}
						Therefore, using \eqref{eqn-trace} and \eqref{eqn-j0-est}, we deduce 
						\begin{align*}
							\int_{\Gamma_1}j^0(\bu_{h,\tau};\overline{\bu}_{h,\tau}-{\bu}_{h,\tau})\d S&\leq \int_{\Gamma_1}\left(k_0+k_1|\bu_{h,\tau}|\right)|\overline{\bu}_{h,\tau}-{\bu}_{h,\tau}|\d S\nonumber\\&\leq \left(k_0|\Gamma_1|^{1/2}+k_1\|\bu_{h,\tau}\|_{\L^2(\Gamma_1)}\right)\|\overline{\bu}_{h,\tau}-{\bu}_{h,\tau}\|_{\L^2(\Gamma_1)}\nonumber\\&\leq \left(k_0|\Gamma_1|^{1/2}+\lambda_0^{-1/2}k_1\|\bu_h\|_{\V}\right)\|\overline{\bu}_{h,\tau}-{\bu}_{h,\tau}\|_{\L^2(\Gamma_1)}\nonumber\\&\to 0\ \text{ as } \ h\to 0. 
						\end{align*}
						Finally, we consider $\langle\f_h,\overline{\bu}_h-{\bu}_h\rangle$ and estimate it using the fact that $\f_h\to\f$ in $\V^*,$ and the convergences \eqref{eqn-nume-1} and \eqref{eqn-nume-10} as 
						\begin{align*}
							|\langle\f_h,\overline{\bu}_h-{\bu}_h\rangle|&\leq |\langle\f_h,\overline{\bu}_h-\overline{\bu}\rangle+|\langle\f_h-\f,\overline{\bu}-{\bu}_h\rangle|+|\langle\f,\overline{u}-\bu_h\rangle|\nonumber\\&\leq\|\f_h\|_{\V^*}\|\overline{\bu}_h-\overline{\bu}\|_{\V}+\left(\|\overline{\bu}\|_{\V}+\|\bu_h\|_{\V}\right)\|\f_h-\f\|_{\V^*}+|\langle\f,\overline{\bu}-\bu_h\rangle|\nonumber\\&\to 0\ \text{ as } h\to 0. 
						\end{align*}
						Therefore, the strong convergence of $	\|\bu_h-\bu\|_{\V}\to 0$ as $h\to 0$  follows from \eqref{eqn-nume-13}. 
							\vskip 0.2cm
						\noindent\textbf{Step 5:} Our next aim is to show that $(\overline{\bu},\overline{p})$ is the unique solution to Problem \ref{prob-hemi-1}. For any given $\bv\in\V$ and $q\in Q$, by Hypothesis \ref{hyp-discrete} $(H(\V_h))$ and $(H(Q_h))$, we infer the existence of sequences $\bv_h\in\V$ and $q_h\in Q$  such that 
						\begin{align}\label{eqn-numer-11}
							\bv_h\to\bv\ \text{ in }\ \V\ \text{ and }\ q_h\to q\ \text{ in }\ Q. 
						\end{align} 
						One can deduce  from \eqref{eqn-hemi-discrete}$_1$ that 
						\begin{align}\label{eqn-nume-14}
								\mu a(\bu_h,\bv_h)+b(\bu_h,\bu_h,\bv_h)+\alpha a_0(\bu_h,\bv_h)&+\beta c(\bu_h,\bv_h)+\kappa c_0(\bu_h,\bv_h)+d(\bv_h,p_h)\nonumber\\+\int_{\Gamma_1}j^0(\bu_{h,\tau};\bv_{h,\tau})\d S&\geq 	\langle\f_h,\bv_h\rangle.
						\end{align}
						Using the convergences \eqref{eqn-nume-9} and \eqref{eqn-numer-11}, we deduce as $h\to 0$
						\begin{align*}
							|a(\bu_h,\bv_h)-a(\overline{\bu},\bv)|&\leq| a(\bu_h-\overline{\bu},\bv_h)|+|a(\overline{\bu},\bv_h-\bv)|\nonumber\\&\leq 2\mu\|\bu_h-\overline{\bu}\|_{\V}\|\bv_h\|_{\V}+2\mu\|\overline{\bu}\|_{\V}\|\bv_h-\bv\|_{\V}\to 0,\nonumber\\	|a_0(\bu_h,\bv_h)-a_0(\overline{\bu},\bv)| &\leq| a_0(\bu_h-\overline{\bu},\bv_h)|+|a_0(\overline{\bu},\bv_h-\bv)|\nonumber\\&\leq C_k^2\|\bu_h-\overline{\bu}\|_{\V}\|\bv_h\|_{\V}+C_k^2\|\overline{\bu}\|_{\V}\|\bv_h-\bv\|_{\V}\to 0,\nonumber\\ |b(\bu_h,\bu_h,\bv_h)-b(\overline{\bu},\overline{\bu},\bv)|&\leq |b(\bu_h-\overline{\bu},\bu_h,\bv_h)|+|b(\overline{\bu},\bu_h-\overline{\bu},\bv_h)|\nonumber\\&\quad+|b(\overline{\bu},\overline{\bu},\bv_h-\bv)|\nonumber\\&\leq C_b\|\bu_h-\overline{\bu}\|_{\V}\|\bu_h\|_{\V}\|\bv_h\|_{\V}+C_b\|\overline{\bu}\|_{\V}\|\bu_h-\overline{\bu}\|_{\V}\|\bv_h\|_{\V}\nonumber\\&\quad+C_b\|\overline{\bu}\|_{\V}\|\overline{\bu}\|_{\V}\|\bv_h-\bv\|_{\V}\to 0,\nonumber\\
							|c(\bu_h,\bv_h)-c(\overline{\bu},\bv)|&\leq |c(\bu_h,\bv_h)-c(\overline{\bu},\bv_h)|+|c(\overline{\bu},\bv_h-\bv)|\nonumber\\&\leq\left\langle\int_0^1\mathcal{C}'(\theta\bu_h+(1-\theta)\overline{\bu})(\bu_h-\overline{\bu})\d\theta,\bv_h\right\rangle\nonumber\\&\quad+\|\overline{\bu}\|_{\L^{r+1}}^{r}\|\bv_h-\bv\|_{\L^{r+1}}\nonumber\\&\leq r\left(\|\bu_h\|_{\L^{r+1}}+\|\overline{\bu}\|_{\L^{r+1}}\right)^{r-1}\|\bu_h-\overline{\bu}\|_{\L^{r+1}}\|\bv_h\|_{\L^{r+1}}\nonumber\\&\quad+\|\overline{\bu}\|_{\L^{r+1}}^{r}\|\bv_h-\bv\|_{\L^{r+1}}\nonumber\\&\leq rC_s^{r+1}\left(\|\bu_h\|_{\V}+\|\overline{\bu}\|_{\V}\right)^{r-1}\|\bu_h-\overline{\bu}\|_{\V}\|\bv_h\|_{\V}\nonumber\\&\quad+C_s^{r+1}\|\overline{\bu}\|_{\V}^{r}\|\bv_h-\bv\|_{\V}\to 0,\nonumber\\ 	|c_0(\bu_h,\bv_h)-c_0(\overline{\bu},\bv)|&\leq |c_0(\bu_h,\bv_h)-c_0(\overline{\bu},\bv_h)|+|c_0(\overline{\bu},\bv_h-\bv)|\nonumber\\&\leq\left\langle\int_0^1\mathcal{C}_0'(\theta\bu_h+(1-\theta)\overline{\bu})(\bu_h-\overline{\bu})\d\theta,\bv_h\right\rangle\nonumber\\&\quad+\|\overline{\bu}\|_{\L^{q+1}}^{q}\|\bv_h-\bv\|_{\L^{q+1}}\nonumber\\&\leq q\left(\|\bu_h\|_{\L^{q+1}}+\|\overline{\bu}\|_{\L^{q+1}}\right)^{q-1}\|\bu_h-\overline{\bu}\|_{\L^{q+1}}\|\bv_h\|_{\L^{q+1}}\nonumber\\&\quad+\|\overline{\bu}\|_{\L^{q+1}}^{q}\|\bv_h-\bv\|_{\L^{q+1}}\nonumber\\&\leq qC_s^{q+1}|\mathcal{O}|^{\frac{r-q}{r+1}}\left(\|\bu_h\|_{\V}+\|\overline{\bu}\|_{\V}\right)^{q-1}\|\bu_h-\overline{\bu}\|_{\V}\|\bv_h\|_{\V}\nonumber\\&\quad+C_s^{q+1}\|\overline{\bu}\|_{\V}^{q}\|\bv_h-\bv\|_{\V}\to 0,\nonumber\\
							|	\langle\f_h,\bv_h\rangle-	\langle\f,\bv\rangle|&\leq|\langle\f_h-\f,\bv_h\rangle|+|	\langle\f,\bv_h-\bv\rangle|\nonumber\\&\leq\|\f_h-\f\|_{\V^*}\|\bv_h\|_{\V}+\|\f\|_{\V^*}\|\bv_h-\bv\|_{\V}\to 0. 
						\end{align*}
						Once again the the compact embedding $\mathcal{V} \hookrightarrow \L^2(\Gamma_1)$ provides (along a subsequence denoted by the same symbol) 
						\begin{align*}
							\bu_{h,\tau}\to\overline{\bu}_{\tau}\ \text{ and }\ \bv_{h,\tau}\to{\bv}_{\tau}\ \text{ a.e. on }\ \Gamma_1. 
						\end{align*}
						Therefore, we infer from Proposition \ref{prop-sub-diff} (ii) that 
						\begin{align*}
							\limsup_{h\to 0}\int_{\Gamma_1}j^0(\bu_{h,\tau};\bv_{h,\tau})\d S\leq\int_{\Gamma_1} \limsup_{h\to 0}j^0(\bu_{h,\tau};\bv_{h,\tau})\d S\leq \int_{\Gamma_1}j^0(\overline{\bu}_{\tau};\bv_{\tau})\d S.
						\end{align*}
						Taking limit supremum over $h\to 0$ in \eqref{eqn-nume-14}, we arrive at 
							\begin{align}\label{eqn-nume-15}
							\mu a(\overline{\bu},\bv)+b(\overline{\bu},\overline{\bu},\bv)+\alpha a_0(\overline{\bu},\bv)&+\beta c(\overline{\bu},\bv)+\kappa c_0(\overline{\bu},\bv)+d(\bv,\overline{p})\nonumber\\+\int_{\Gamma_1}j^0(\overline{\bu}_{\tau};\bv_{\tau})\d S&\geq 	\langle\f,\bv\rangle.
						\end{align}
						The relations \eqref{eqn-nume-15} and \eqref{eqn-nume-2} show that $(\overline{\bu},\overline{p})=(\bu,p)$ is the unique solution of Problem \ref{prob-hemi-1}. Since the limit $(\bu,p)$ is unique, the entire sequence $\{(\bu_h,p_h)\}_h$ converges and  as $h\to 0$ 
						\begin{align*}
							\bu_h\to\bu\ \text{ in } \ \V\ \text{ and }\ q_h\to q\ \text{ in }\ Q. 
						\end{align*}
							\vskip 0.2cm
						\noindent\textbf{Step 6:} Finally, we show the following strong convergence: 
						\begin{align}\label{eqn-numer-1-6}
							p_h\to p\ \text{ in }\ Q. 
						\end{align}
						We infer from the Babu\u{s}ka-Brezzi condition \eqref{eqn-numer-1} that 
						\begin{align}\label{eqn-numer-1-7}
							\vartheta_1\|p_h-\overline{p}_h\|_Q\leq\sup_{\bv_h\in\V_{0,h}}\frac{b(\bv_h,p_h-\overline{p}_h)}{\|\bv_h\|_{\V}}. 
						\end{align}
						Let us rewrite $d(\bv_h,p_h-\overline{p}_h)$ as 
						\begin{align}\label{eqn-numer-1-8}
							d(\bv_h,p_h-\overline{p}_h)=d(\bv_h,p_h-p)+b(\bv_h,p-\overline{p}_h). 
						\end{align}
						We infer from \eqref{eqn-numer-0} and \eqref{eqn-con-1} that 
							\begin{align}\label{eqn-numer-16}
							\mu a(\bu_h,\bv_h)+b(\bu_h,\bu_h,\bv_h)&+\alpha a_0(\bu_h,\bv_h)+\beta c(\bu_h,\bv_h)+\kappa c_0(\bu_h,\bv_h)\nonumber\\+d(\bv_h,p_h)&=\langle\f_h,\bv_h\rangle \ \text{ for all }\ \bv_h\in\V_{0,h},
						\end{align}
						and 
							\begin{align}\label{eqn-numer-17}
							\mu a(\bu,\bv_h)+b(\bu,\bu,\bv_h)&+\alpha a_0(\bu,\bv_h)+\beta c(\bu,\bv_h)+\kappa c_0(\bu,\bv_h)\nonumber\\+d(\bv_h,p)&=	\langle\f,\bv_h\rangle  \ \text{ for all }\ \bv_h\in\V_{0,h}. 
						\end{align}
						Subtracting \eqref{eqn-numer-17} from \eqref{eqn-numer-16}, we deduce 
						\begin{align}\label{eqn-numer-18}
							d(\bv_h,p_h-p)&=\mu a(\bu-\bu_h,\bv_h)+[b(\bu,\bu,\bv_h)-b(\bu_h,\bu_h,\bv_h)]+\alpha a_0(\bu-\bu_h,\bv_h)\nonumber\\&\quad+\beta [c(\bu,\bv_h)-c(\bu_h,\bv_h)]+\kappa[c_0(\bu,\bv_h)-c_0(\bu_h,\bv_h)]+\langle\f_h-\f,\bv_h\rangle.
						\end{align}
						Using \eqref{eqn-equiv-1}, \eqref{eqn-a-est-1}, \eqref{eqn-b-est-1}, Taylor's formula Sobolev's embedding and H\"older's inequality, we find 
						\begin{align*}
							|a(\bu-\bu_h,\bv_h)|&\leq 2\|\bu-\bu_h\|_{\V}\|\bv_h\|_{\V},\nonumber\\
							|a_0(\bu-\bu_h,\bv_h)|&\leq \|\bu-\bu_h\|_{\H}\|\bv_h\|_{\H}\leq C_k^2\|\bu-\bu_h\|_{\V}\|\bv_h\|_{\V},\nonumber\\
						|b(\bu,\bu,\bv_h)-b(\bu_h,\bu_h,\bv_h)|&\leq |b(\bu-\bu_h,\bu,\bv_h)|+|b(\bu_h,\bu-\bu_h,\bv_h)|\nonumber\\ &\leq C_b\left(\|\bu\|_{\V}+\|\bu_h\|_{\V}\right)\|\bu-\bu_h\|_{\V}\|\bv_h\|_{\V},\nonumber\\
						|c(\bu,\bv_h)-c(\bu_h,\bv_h)|&=\left|\left<\int_0^1\mathcal{C}'(\theta\bu+(1-\theta)\bu_h)\d\theta(\bu-\bu_h),\bv_h\right>\right|\nonumber\\&\leq r\left(\|\bu\|_{\L^{r+1}}+\|\bu_h\|_{\L^{r+1}}\right)^{r-1}\|\bu-\bu_h\|_{\L^{r+1}}\|\bv_h\|_{\L^{r+1}}\nonumber\\&\leq rC_s^{r+1}\left(\|\bu\|_{\V}+\|\bu_h\|_{\V}\right)^{r-1}\|\bu-\bu_h\|_{\V}\|\bv_h\|_{\V},\\
							|c_0(\bu,\bv_h)-c_0(\bu_h,\bv_h)|&=\left|\left<\int_0^1\mathcal{C}'_0(\theta\bu+(1-\theta)\bu_h)\d\theta(\bu-\bu_h),\bv_h\right>\right|\nonumber\\&\leq q\left(\|\bu\|_{\L^{q+1}}+\|\bu_h\|_{\L^{q+1}}\right)^{q-1}\|\bu-\bu_h\|_{\L^{q+1}}\|\bv_h\|_{\L^{q+1}}\nonumber\\&\leq qC_s^{q+1}|\mathcal{O}|^{\frac{r-q}{r+1}}\left(\|\bu\|_{\V}+\|\bu_h\|_{\V}\right)^{q-1}\|\bu-\bu_h\|_{\V}\|\bv_h\|_{\V},\nonumber\\
							|\langle\f_h-\f,\bv_h\rangle|&\leq\|\f_h-\f\|_{\V^*}\|\bv_h\|_{\V}. 
						\end{align*} 
					By combining the above estimates with \eqref{eqn-numer-18} and \eqref{eqn-numer-1-7}, and then using \eqref{eqn-numer-1-8}, we conclude that
					\begin{align}\label{eqn-numer-19}
						&	\vartheta_1\|p_h-\overline{p}_h\|_Q\nonumber\\&\leq\sup_{\bv_h\in\V_{0,h}}\frac{d(\bv_h,p_h-p)+d(\bv_h,p-\overline{p}_h)}{\|\bv_h\|_{\V}}\nonumber\\&\leq \sup_{\bv_h\in\V_{0,h}}\frac{\begin{array}{l}\mu a(\bu-\bu_h,\bv_h)+[b(\bu,\bu,\bv_h)-b(\bu_h,\bu_h,\bv_h)]+\alpha a_0(\bu-\bu_h,\bv_h)\\ \quad+\beta [c(\bu,\bv_h)-c(\bu_h,\bv_h)]+\kappa[c_0(\bu,\bv_h)-c_0(\bu_h,\bv_h)]+\langle\f_h-\f,\bv_h\rangle\\ \quad+d(\bv_h,p-\overline{p}_h)\end{array}}{\|\bv_h\|_{\V}}\nonumber\\&\leq \Big[\Big\{(2\mu+\alpha C_k^2)+C_b(\|\bu\|_{\V}+\|\bu_h\|_{\V})+rC_s^{r+1}\left(\|\bu\|_{\V}+\|\bu_h\|_{\V}\right)^{r-1}\nonumber\\&\quad+qC_s^{q+1}|\mathcal{O}|^{\frac{r-q}{r+1}}\left(\|\bu\|_{\V}+\|\bu_h\|_{\V}\right)^{q-1}\Big\}\|\bu-\bu_h\|_{\V}+\|\f_h-\f\|_{\V^*}+C_k\|p-\overline{p}_h\|_{Q}\Big].
					\end{align}
					An application of triangle inequality yields 
					\begin{align}\label{eqn-numer-20}
						\|p_h-p\|_Q\leq 	\|p_h-\overline{p}_h\|_Q+	\|\overline{p}_h-p\|_Q. 
					\end{align}
					Using \eqref{eqn-numer-19} in \eqref{eqn-numer-20}, we deduce 
					\begin{align}\label{eqn-numer-21}
						\|p_h-p\|_Q\leq C\left[\|\bu-\bu_h\|_{\V}+\|\f_h-\f\|_{\V^*}+\|p-\overline{p}_h\|_{Q}\right]. 
					\end{align}
				Applying the inequalities \eqref{eqn-nume-9} and \eqref{eqn-nume-10}, and noting that $\overline{\bu} = \bu$, $\overline{p} = p$, along with the convergence $\f_h \to \f$ in $\V^*$, we deduce from \eqref{eqn-numer-21} that \eqref{eqn-numer-1-6} holds.
				\end{proof}
				
				To proceed with the convergence analysis of the numerical approximation for the optimal control problem \eqref{eqn-opt-2}, we introduce an additional assumption on the cost functional $R$. 
				
				\begin{hypothesis}\label{hyp-conv}
					\begin{enumerate}
						\item [$(H(R'))$] If $\bu_m\to\bu$, $p_m\to p$ and $\f_m\to\f$ in $\V^*$, then 
						\begin{align}
							R(\bu,p,\f)=\lim_{m\to\infty}R(\bu_m,p_m,\f_m). 
						\end{align}
					\end{enumerate}
				\end{hypothesis}
			We note that the cost functionals defined in \eqref{eqn-cost-1} and \eqref{eqn-cost-2} satisfy this property. The following result is inspired by \cite[Theorem 4.2]{WWXCWH}, and for the sake of completeness, we include a proof here.
			
			\begin{theorem}\label{thm-conv-cost}
					Let 	Hypotheses \ref{hyp-sup-j}, \ref{hyp-opt} $(H(R))$, \eqref{eqn-unique-con-1} or \eqref{eqn-unique-con-2},  Hypotheses \ref{hyp-discrete}, and in addition Hypothesis \ref{hyp-conv}  be satisfied. For each $h>0$, let $\f_h$ be a solution of the problem \eqref{eqn-opt-2}.
					Then, there exists a subsequence, still denoted by $\{\f_h\}$, and an element $\f \in \V^*_{\mathrm{ad}}$ such that
						$$
					\f_h \to \f \ \text{in} \ \V^*, \quad \bu_h(\f_h) \to \bu(\f) \ \text{in} \ \V, \quad p_h(\f_h) \to p(\f) \ \text{in} \ Q,
					$$
						and $\f \in \V^*_{\mathrm{ad}}$ is a solution to the optimal control problem \eqref{eqn-opt-1}.
			\end{theorem}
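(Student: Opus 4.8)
The plan is to run the standard recovery-sequence (equivalently, $\Gamma$-convergence) argument for discretized optimal control problems, using Theorem~\ref{thm-discrete-conv} to supply the essential continuous dependence of the discrete state on the discrete control, and using the two halves of Hypothesis~\ref{hyp-discrete} $(H(\V_{\mathrm{ad},h}^*))$ to mediate between the discrete admissible set $\V_{\mathrm{ad},h}^*$ and the continuous one $\V_{\mathrm{ad}}^*$.

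\textbf{Step 1 (compactness of the discrete minimizers).} Since $\f_h\in\V_{\mathrm{ad},h}^*\subset\V_{m_0}^*$, the family $\{\f_h\}$ is bounded in $\V^*$. Using the reverse-approximation part of Hypothesis~\ref{hyp-discrete} $(H(\V_{\mathrm{ad},h}^*))$, I would select $\f_h'\in\V_{\mathrm{ad}}^*$ with $\|\f_h-\f_h'\|_{\V^*}\to 0$; the compactness of the continuous admissible set $\V_{\mathrm{ad}}^*$ (Hypothesis~\ref{hyp-opt} $(H(\V_{\mathrm{ad}}^*))$) then yields a subsequence with $\f_h'\to\f$ strongly in $\V^*$ for some $\f\in\V_{\mathrm{ad}}^*$, whence $\f_h\to\f$ in $\V^*$ along the same subsequence. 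This simultaneously produces the required strong limit and certifies its admissibility in the \emph{continuous} set.

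\textbf{Step 2 (state convergence and the lower bound).} With $\f_h\to\f$ in $\V^*$ in hand, Theorem~\ref{thm-discrete-conv} immediately gives $\bu_h(\f_h)\to\bu(\f)$ in $\V$ and $p_h(\f_h)\to p(\f)$ in $Q$, where $(\bu(\f),p(\f))$ is the unique solution of Problem~\ref{prob-hemi-1} associated with $\f$. Lower semicontinuity of the cost (Hypothesis~\ref{hyp-opt} $(H(R))$) then yields
\begin{align*}
\mathcal{R}(\f)=R(\bu(\f),p(\f),\f)\leq\liminf_{h\to0}R(\bu_h(\f_h),p_h(\f_h),\f_h)=\liminf_{h\to0}\mathcal{R}_h(\f_h).
\end{align*}
For the matching upper bound, let $\widehat{\f}\in\V_{\mathrm{ad}}^*$ be an optimal control for \eqref{eqn-opt-1}, which exists by Theorem~\ref{thm-optimal-exis}. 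The forward-approximation part of Hypothesis~\ref{hyp-discrete} $(H(\V_{\mathrm{ad},h}^*))$ furnishes $\widehat{\f}_h\in\V_{\mathrm{ad},h}^*$ with $\widehat{\f}_h\to\widehat{\f}$ in $\V^*$, and Theorem~\ref{thm-discrete-conv} gives $\bu_h(\widehat{\f}_h)\to\bu(\widehat{\f})$ in $\V$ and $p_h(\widehat{\f}_h)\to p(\widehat{\f})$ in $Q$. Since $\f_h$ minimizes $\mathcal{R}_h$ over $\V_{\mathrm{ad},h}^*$, one has $\mathcal{R}_h(\f_h)\leq\mathcal{R}_h(\widehat{\f}_h)$, and invoking the continuity of the cost (Hypothesis~\ref{hyp-conv} $(H(R'))$) gives
\begin{align*}
\limsup_{h\to0}\mathcal{R}_h(\f_h)\leq\lim_{h\to0}R(\bu_h(\widehat{\f}_h),p_h(\widehat{\f}_h),\widehat{\f}_h)=R(\bu(\widehat{\f}),p(\widehat{\f}),\widehat{\f})=\mathcal{R}(\widehat{\f}).
\end{align*}
Chaining the two bounds, together with $\f\in\V_{\mathrm{ad}}^*$ and the optimality of $\widehat{\f}$, produces
\begin{align*}
\mathcal{R}(\widehat{\f})\leq\mathcal{R}(\f)\leq\liminf_{h\to0}\mathcal{R}_h(\f_h)\leq\limsup_{h\to0}\mathcal{R}_h(\f_h)\leq\mathcal{R}(\widehat{\f}),
\end{align*}
so all inequalities are equalities; in particular $\f$ solves \eqref{eqn-opt-1} and $\mathcal{R}_h(\f_h)\to\mathcal{R}(\f)$ as a by-product.

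\textbf{Main obstacle.} The genuinely analytic difficulty — passing to the limit through the nonsmooth boundary term $\int_{\Gamma_1}j^0(\cdot;\cdot)\d S$ and the fast-growing nonlinearities $c,c_0$, while controlling the discrete pressure through the Babu\v{s}ka--Brezzi condition — is already packaged inside Theorem~\ref{thm-discrete-conv}, so the remaining delicacy is structural: I must deploy the \emph{two distinct} halves of Hypothesis~\ref{hyp-discrete} $(H(\V_{\mathrm{ad},h}^*))$ in opposite directions (reverse approximation to compactify the minimizers and to certify $\f\in\V_{\mathrm{ad}}^*$, forward approximation to build the recovery sequence), keeping in mind that $\V_{\mathrm{ad},h}^*\not\subset\V_{\mathrm{ad}}^*$ in general. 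The crux is thus ensuring that the limit control is admissible in the \emph{continuous} set and that the convergence $\f_h\to\f$ is \emph{strong} in $\V^*$ (rather than merely weak), which is exactly where the compactness of $\V_{\mathrm{ad}}^*$ and the reverse-approximation property are indispensable, since Theorem~\ref{thm-discrete-conv} requires strong convergence of the control data.
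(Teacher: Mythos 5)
Your proposal is correct and follows essentially the same route as the paper's proof: reverse approximation of the discrete minimizers by elements of $\V_{\mathrm{ad}}^*$ plus compactness of $\V_{\mathrm{ad}}^*$ to extract a strongly convergent subsequence with admissible limit, Theorem~\ref{thm-discrete-conv} for the state convergence, and a recovery sequence $\overline{\f}_h\to\overline{\f}$ for a continuous optimal control combined with the discrete optimality $\mathcal{R}_h(\f_h)\leq\mathcal{R}_h(\overline{\f}_h)$. The only cosmetic difference is that you invoke lower semicontinuity $(H(R))$ for the lower bound where the paper uses continuity $(H(R'))$ on both sides — a harmless (indeed slightly sharper) variant, since both hypotheses are assumed in the theorem.
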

			\begin{proof}
				By the assumption $H(\V^*_{\mathrm{ad}},h)$ in Hypotheses \ref{hyp-discrete}, there exists a sequence $\{\f_h'\}_h \subset \V^*_{\mathrm{ad}}$ such that
				$$
				\|\f_h' - \f_h\|_{\V^*} \to 0 \  \text{ as } \ h \to 0.
				$$
				Since $\V^*_{\mathrm{ad}}$ is compact, we can extract a subsequence (if necessary) such that $\f_h' \to \f$ in $\V^*$, for some $\f \in \V^*_{\mathrm{ad}}$. Then, by the triangle inequality, it follows that $\f_h \to \f$ in $\V^*$. Applying Theorem \ref{thm-discrete-conv}, we obtain
				\begin{align*}
					\bu_h\to\bu\ \text{ in }\ \V\ \text{ and }\ p_h\to p\ \text{ in }\ Q. 
				\end{align*}
				Therefore, by Hypothesis \ref{hyp-conv} $(H(R'))$, we have 
				\begin{align}
					\mathcal{R}(\f)=\lim_{h\to 0}\mathcal{R}_h(\f_h). 
				\end{align}
				Our aim is to show that $\f$ is a solution of the optimal control problem \eqref{eqn-opt-1}. Let $\overline{\f}$ be a solution to \eqref{eqn-opt-1}. Then by Hypothesis \ref{hyp-discrete} $(H(\V_{\mathrm{ad},h}^*))$, there exists $\overline{\f}_h\in\V_{\mathrm{ad},h}^*$ such that 
				\begin{align*}
					\overline{\f}_h\to \overline{\f} \ \text{ in }\V^*\ \text{ as }\ h\to 0. 
				\end{align*}
				Therefore, an application of Theorem \ref{thm-discrete-conv} yields 
				\begin{align*}
						\bu_h(\overline{\f}_h)\to\bu(\overline{\f})\ \text{ in }\ \V\ \text{ and }\ p_h(\overline{\f}_h)\to p(\overline{\f})\ \text{ in }\ Q. 
				\end{align*}
				Once again by Hypothesis \ref{hyp-conv} $(H(R'))$,  we deduce 
				\begin{align}
					\mathcal{R}(\overline{\f})=\lim_{h\to 0}\mathcal{R}_h(\overline{\f}_h). 
				\end{align}
				Since $\f_h$ is a solution of the problem \eqref{eqn-opt-2}, we infer 
				\begin{align*}
					\mathcal{R}_h(\f)\leq\mathcal{R}_h(\overline{\f}_h). 
				\end{align*}
				By taking the limit on both sides of the above inequality as $h\to 0$, we arrive at
				\begin{align*}
					\mathcal{R}(\f)\leq\mathcal{R}(\overline{\f}),
				\end{align*}
				so that $\f$ is a solution of the optimal control problem \eqref{eqn-opt-1}.
			\end{proof}

\section{Numerical experiments}\setcounter{equation}{0} \label{sec7}

The theoretical results established in Section~\ref{sec5}-\ref{sec6} guarantee existence (see Theorem \ref{thm-optimal-exis}) of an optimal
control for the hemivariational inequality (HVI) governed CBFeD model and convergence of
discrete approximations (see Theorem~\ref{thm-conv-cost}). However, due to the presence of the nonsmooth slip boundary condition,
the reduced cost functional $\mathcal{R}(\f) = R(\bu(\f),p(\f),\f)$ is in general \emph{not
	differentiable}. Consequently, classical first-order optimality conditions based on adjoint
equations are not available. Instead, numerical solution of the optimal control problem must be
based on \emph{nonsmooth optimization techniques}, typically involving subgradient methods,
bundle methods, or proximal-type iterations (see Clarke~\cite{FHC}).
Here we employ a projected subgradient method, which is simple to implement and sufficient to
illustrate the feasibility of optimal control computations for HVI-CBFeD problems. Let us briefly recall the algorithm for the computation of Problem~\ref{prob-hemi-discrete} by considering $\f_h$ as forcing term, as discussed in \cite[Section 5]{WAMTM}. 
The computational domain is taken as the unit square $\mathcal{O} = (0,1) \times (0,1)$, where a slip boundary condition is imposed on the top boundary 
$\Gamma_{1} = (0,1) \times \{1\}$, while homogeneous Dirichlet boundary conditions are prescribed on the remaining part of the boundary. 
The nonlinear slip condition is derived from the functional
\[
j(\boldsymbol{z}) = \int_{0}^{|\boldsymbol{z}|} \omega(t)\,dt,
\]
where $\omega : [0,\infty) \to \mathbb{R}$ is continuous with $\omega(0) > 0$. 
In our computations, we employ the slip coefficient of the form
\begin{align*}
\omega(t) = (a-b)e^{-\rho t} + b,
\end{align*}
with constants $a > b > 0$ and $\rho > 0$, whose values are specified in Table~\ref{tab:data}. 
The nonlinear slip boundary condition 
\begin{align*}
- \boldsymbol{\sigma}_{\tau} \in \partial j(\bu_{\tau})
\end{align*}
is equivalently expressed as
\begin{align*}
\begin{cases}
	|\boldsymbol{\sigma}_{\tau}| \leq \omega(0), & \text{if } \bu_{\tau} = 0, \\[1ex]
	- \boldsymbol{\sigma}_{\tau} = \omega(|\bu_{\tau}|)\,\dfrac{\bu_{\tau}}{|\bu_{\tau}|}, & \text{if } \bu_{\tau} \neq 0.
\end{cases}
\end{align*}

\begin{algorithm}[Projected Subgradient Scheme]\label{algorithm}
Below we give a concise, implementation-oriented algorithm for the optimal control problem governed by a hemivariational inequality (HVI) for CBFeD. 

%
	\begin{enumerate}
		\item \textbf{Initialization.}
		\begin{enumerate}
			\item Generate mesh and build FE spaces $\V_h,Q_h,$ and $\V_{\mathrm{ad},h}^*$.
			\item Choose initial control $\f_h^0\in \V_{\mathrm{ad},h}^*$ (e.g. $\f_h^0\equiv 0$).
			\item Fix tolerances $\varepsilon_{\rm hvi}$ (state solver), $\varepsilon_{\rm opt}$ (outer stop),
			and a step-size rule $\{\tau_k\}$.
		\end{enumerate}
		\item \textbf{Outer iteration:} for $k=0,1,2,\dots$ until stopping do:
		\begin{enumerate}
			\item[(a)] \textbf{State solve (discrete HVI).} With control $\f_h^k$, solve the discrete HVI for
			$(\bu_h^k,p_h^k)$ using the Uzawa-Newton inner solver as discussed in \cite[Section 5]{WAMTM}.
			\item[(b)] \textbf{Evaluate cost:} Compute
			\begin{align*}
				\mathcal{R}_1(\f_h) & =\frac{\alpha_1}{2}\int_{\mathcal{O}}\|\bu_h^k(\x)-\bu_d(\x)\|^2_{\R^d}\d\x+\frac{\alpha_2}{2}\int_{\mathcal{O}}|p_h^k(\x)-p_d(\x)|^2\d\x \\
				& \qquad +\frac{\alpha_3}{2}\int_{\mathcal{O}}\|\f_h^k(\x)\|_{\R^d}^2\d\x
			\end{align*}
			\item[(c)] \textbf{Compute a generalized subgradient $\boldsymbol{g}_h^k\in\partial \mathcal{R}_1(\f_h^k)$.} Approximate directional derivatives numerically:
				\[
				[\boldsymbol{g}_h^k]_i \approx \frac{\mathcal{R}_1(\f_h^k+\delta e_i) - \mathcal{R}_1(\f_h^k)}{\delta},
				\]
				for a small $\delta>0$.
			\item[(d)] \textbf{Control update (projected subgradient step).}
			\[
			\f_h^{k+1} \;=\; P_{\V_{\mathrm{ad},h}^*}\big(\f_h^k - \tau_k\, \boldsymbol{g}^k\big).
			\]
			Use a step-size rule $\tau_k$ appropriate.
			\item[(e)] \textbf{Stopping check.} If $\|\f_h^{k+1}-\f_h^k\|_{L^2(\Omega;\mathbb{R}^d)}<\varepsilon_{\rm opt},$ then stop; otherwise continue.
		\end{enumerate}
		
	\end{enumerate}
	\end{algorithm}
We implement three numerical examples by considering the functional $\mathcal{R}_1,$ and the numerical experiments are carried out using the finite element library FEniCS, implemented in Python, and the iterative procedure described in the above Algorithm is employed. For the implementation, we choose the following parameters:
\begin{table}[h!]
	\centering
	\begin{tabular}{c|c c c c c c c c c c c c}
		\hline
		\textbf{Example} & $\mu$ & $\alpha$ & $\beta$ & $\kappa$ & $(r,q)$ & $a$ & $b$ & $\rho$ & $\eta$ & $\alpha_1$ & $\alpha_2$ & $\alpha_3$ \\
		\hline
		0.5 & 1.2 & 0   & 0 & 0    & (--,--) & 1.55 & 1.53 & 3.0 & 1   & 1.0 & 1.2 & 0.2 \\
		2 & 1.0 & 1.5 & 1.0   & 0.0 & (3,--)  & 4.01 & 4.00 & 1.5 & 2.0   & 1.0 & 1.0 & 0.5 \\
		3 & 1.0 & 0.5 & 1.0 & -0.5 & (3.0, 1.5)  & 3.25 & 3.20 & 0.5 & 0.1 & 1.0 & 0.5 & 0.1 \\
		\hline
	\end{tabular}
	\caption{Setup for numerical experiments}\label{tab:data}
\end{table}

\subsection{Example 1} It is worth noting that, with the parameter choices specified in 
Table~\ref{tab:data}, this example effectively reduces to the classical \emph{Navier-Stokes equations}.
Here, we choose the desired vector $\bu_d$ and pressure $p_d$ as 
\begin{align*}
	& \bu_d = \begin{pmatrix}
		-\,x^{2}(x-1)\,y\,(3y-2) \\ x\,(3x-2)\,y^{2}(y-1)
	\end{pmatrix}, \  p_d = (2x-1)(2y-1).
\end{align*}
In this example, we initialize the control with the zero vector, 
$\f_h^0 = (x-1,y-1)$. For the Uzawa-Newton inner iterations, the stopping 
tolerance is set to $\varepsilon_{\mathrm{hvi}} = 10^{-5}$. The outer 
iteration, corresponding to the successive control updates, is terminated 
once the criterion $\varepsilon_{\mathrm{opt}} = 10^{-5}$ is satisfied. 
Furthermore, the step-size sequence is chosen as a constant value, $\tau_k = 10^{-2}$ for all $k$. The optimal control $\f_h^*$, together with the corresponding optimal velocity $\bu_h^*$ and pressure $p_h^*$ in the finest mesh, is shown in Figure~\ref{fig:Ex1-plot-oVCP}. The history of the cost functional on the 
finest mesh is presented in Figure~\ref{fig:Ex1-CostHist}. To assess the 
accuracy, we compute the relative errors by taking the numerical solution on the finest mesh as the reference solution. These errors are summarized in Table~\ref{tab:ex1-errors-pred} and illustrated graphically in Figure~\ref{fig:Ex1-errors}. As can be clearly observed, the errors in the velocity, pressure, and control decrease consistently, thereby confirming the expected convergence behavior.

\begin{figure}[ht]
	\centering
	\includegraphics[width=\textwidth]{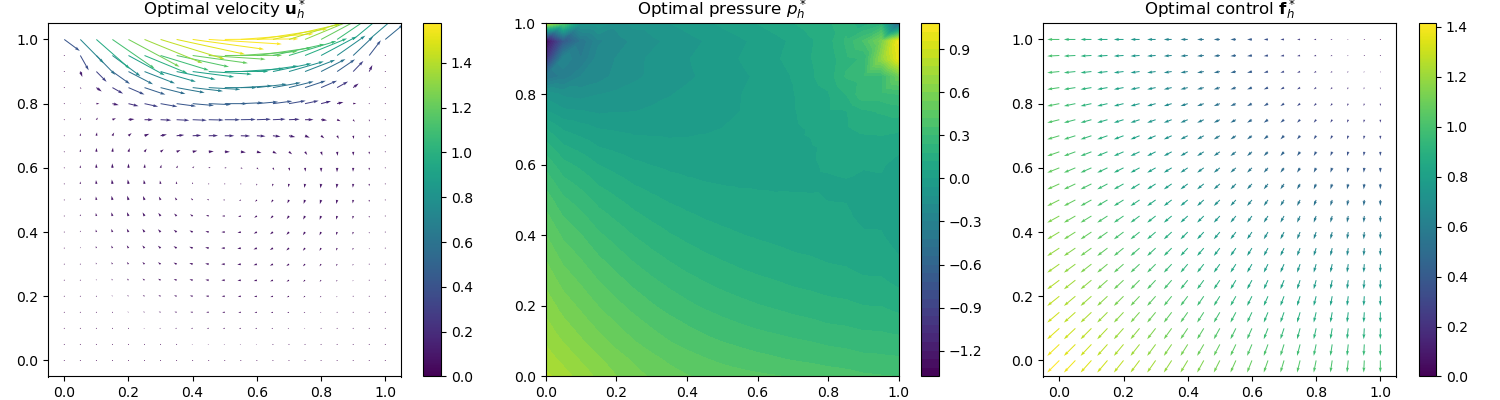}
	\caption{Plot of optimal velocity $\bu_h^*$, optimal pressure $p_h^*$, and optimal control $\f_h^*$ in mesh size $25\times 25$.}
	\label{fig:Ex1-plot-oVCP}
\end{figure}

\begin{figure}[ht]
	\centering
	\begin{subfigure}{0.48\textwidth}
		\centering
		\includegraphics[width=\linewidth]{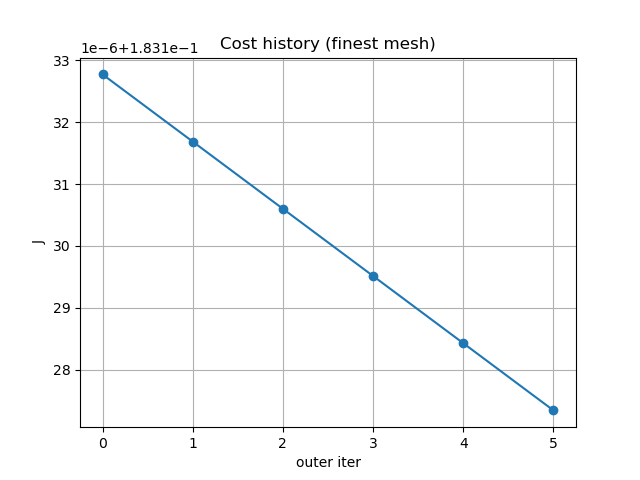}
		\caption{Cost history.}
		\label{fig:Ex1-CostHist}
	\end{subfigure}
	\hfill
	\begin{subfigure}{0.48\textwidth}
		\centering
		\includegraphics[width=\linewidth]{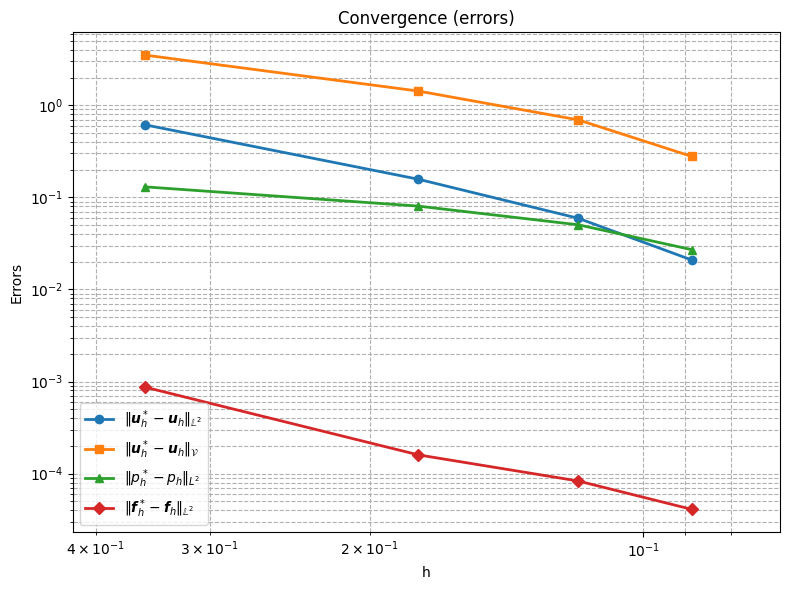}
		\caption{Pressure error vs. mesh size.}
		\label{fig:Ex1-errors}
	\end{subfigure}
	\caption{Cost history and plots of errors.}
	\label{fig:sidebyside}
\end{figure}

\begin{table}[h!]
	\centering
	\begin{tabular}{ccccc}
		\hline
		$h$ & $\|\boldsymbol{u}_h^*-\boldsymbol{u}_h\|_{\mathbb{L}^2}$ &
		$\|\boldsymbol{u}_h^*-\boldsymbol{u}_h\|_{\mathcal{V}}$ &
		$\|p_h^*-p_h\|_{L^2}$ &
		$\|\boldsymbol{f}_h^*-\boldsymbol{f}_h\|_{\mathbb{L}^2}$ \\
		\hline
		$3.5355{\times}10^{-1}$ & $6.144{\times}10^{-1}$ & $3.512{\times}10^{0}$ & $1.301{\times}10^{-1}$ & $8.700{\times}10^{-4}$ \\
		$1.7678{\times}10^{-1}$ & $1.574{\times}10^{-1}$ & $1.427{\times}10^{0}$ & $8.032{\times}10^{-2}$ & $1.597{\times}10^{-4}$ \\
		$1.1785{\times}10^{-1}$ & $5.922{\times}10^{-2}$ & $6.939{\times}10^{-1}$ & $5.033{\times}10^{-2}$ & $8.304{\times}10^{-5}$ \\
		$8.8388{\times}10^{-2}$ & $2.085{\times}10^{-2}$ & $2.780{\times}10^{-1}$ & $2.702{\times}10^{-2}$ & $4.083{\times}10^{-5}$ \\
		\hline
	\end{tabular}
	\caption{Relative errors for velocity in $\mathbb{L}^2$, velocity in $\mathcal{V}$, pressure in $L^2$, and control in $\mathbb{L}^2$.}\label{tab:ex1-errors-pred}
\end{table}

\subsection{Example 2} Here, we choose the desired vector $\bu_d$ and pressure $p_d$ as 
\begin{align*}
	& \bu_d(x,y) = \begin{pmatrix}\sin(\pi x)\sin(\pi y) \\ \cos(\pi x)\cos(\pi y)\end{pmatrix}, \ p_d(x,y) = \sin(\pi x)\cos(\pi y).
\end{align*}
We initialize the control with the zero vector, 
$\f_h^0 = (\sin(\pi x), \cos(\pi y))$. The stopping tolerance for the Uzawa-Newton inner 
iterations is set to $\varepsilon_{\mathrm{hvi}} = 10^{-5}$, while the 
outer stopping criterion for the successive control updates is chosen as 
$\varepsilon_{\mathrm{opt}} = 10^{-5}$. The step-size sequence is again 
taken to be constant with $\tau_k = 10^{-2}$ for all $k$. At the finest mesh, the optimal 
control $\f_h^*$, along with the corresponding optimal velocity $\bu_h^*$ 
and pressure $p_h^*$, is displayed in Figure~\ref{fig:Ex2-plot-oVCP}. 
The cost functional history on the finest mesh is presented in 
Figure~\ref{fig:Ex2-CostHist}. The relative errors are computed by taking 
the numerical solution on the finest mesh as the reference solution. These 
results are reported in Table~\ref{tab:ex2-errors-pred} and depicted in 
Figure~\ref{fig:Ex2-errors}. The errors in velocity, pressure, and control 
are observed to decrease steadily, confirming the expected convergence 
behavior.

\begin{figure}[ht]
	\centering
	\includegraphics[width=\textwidth]{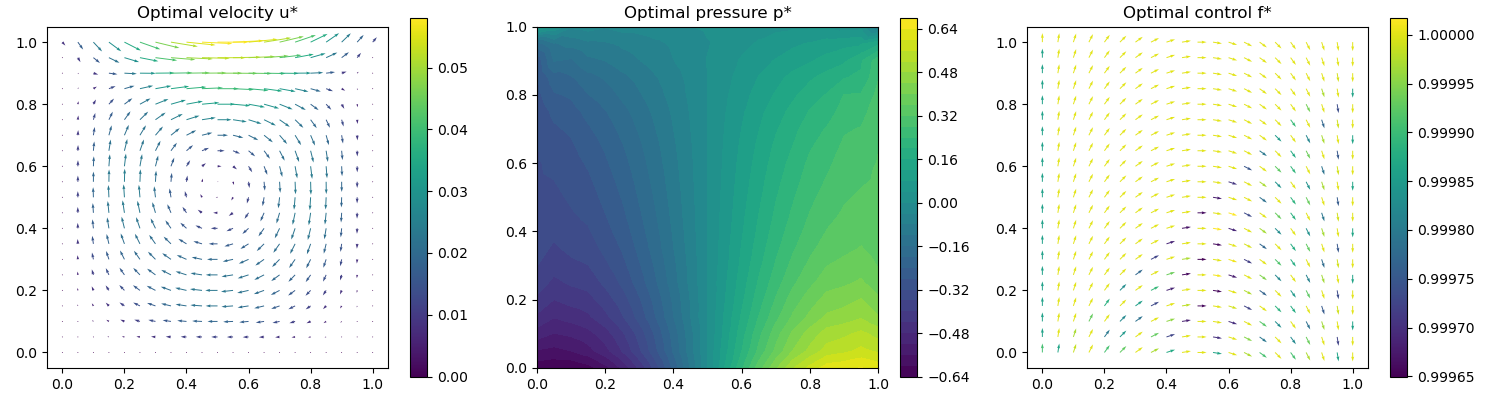}
	\caption{Plot of optimal velocity $\bu_h^*$, optimal pressure $p_h^*$, and optimal control $\f_h^*$ in mesh size $25\times 25$.}
	\label{fig:Ex2-plot-oVCP}
\end{figure}

\begin{figure}[ht]
	\centering
	\begin{subfigure}{0.48\textwidth}
		\centering
		\includegraphics[width=\linewidth]{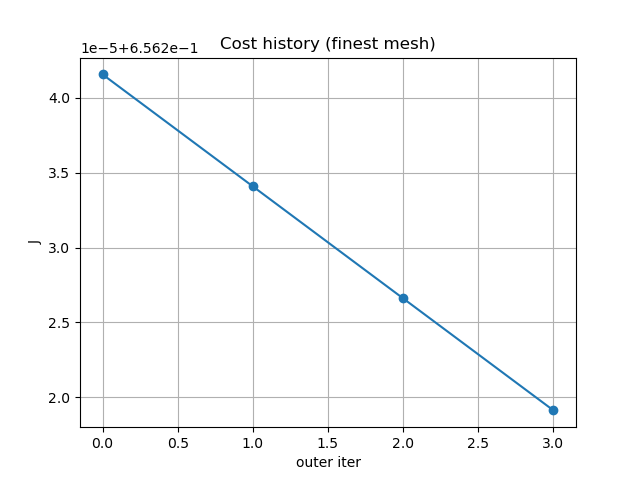}
		\caption{Cost history.}
		\label{fig:Ex2-CostHist}
	\end{subfigure}
	\hfill
	\begin{subfigure}{0.48\textwidth}
		\centering
		\includegraphics[width=\linewidth]{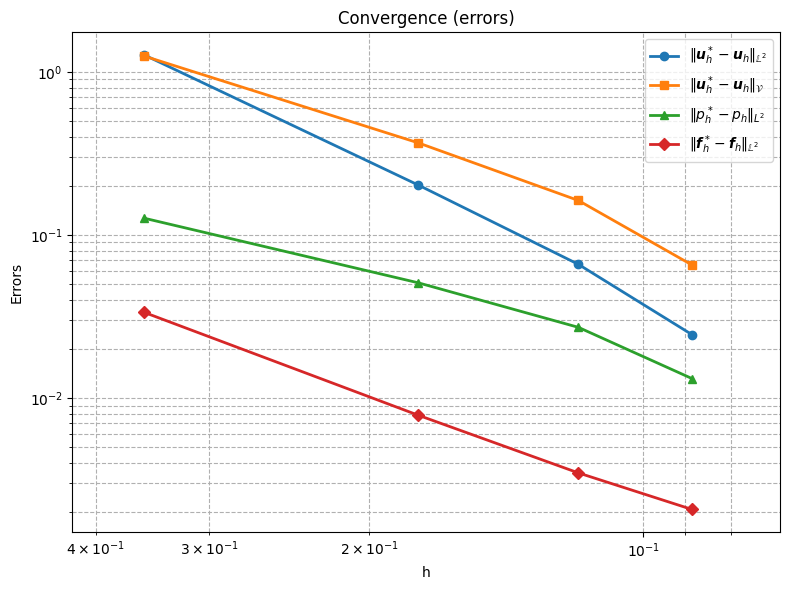}
		\caption{Errors vs. mesh size.}
		\label{fig:plot22}
	\end{subfigure}
	\caption{Cost history and plots of errors.}
	\label{fig:Ex2-errors}
\end{figure}

\begin{table}[htbp]
	\centering
	\begin{tabular}{ccccc}
		\hline
		$h$ & $\|\bu_h^*-\bu_h\|_{\L^2}$  & $\|\bu_h^*-\bu_h\|_{\mathcal{V}}$ & $\|p_h^*-p_h\|_{L^2}$ & $\|\f_h^*-\f_h\|_{\L^2}$  \\
		\hline
		$3.5355\!\times\!10^{-1}$ & $1.266\! \times\!10^{0}$   & $1.246\!\times\!10^{0}$   & $1.265\!\times\!10^{-1}$   & $3.347\!\times\!10^{-2}$    \\
		$1.7678\!\times\!10^{-1}$ & $2.023\!\times\!10^{-1}$  & $3.668\!\times\!10^{-1}$  & $5.089\!\times\!10^{-2}$  & $7.855\!\times\!10^{-3}$  \\
		$1.1785\!\times\!10^{-1}$ & $6.608\!\times\!10^{-2}$  & $1.627\!\times\!10^{-1}$  & $2.715\!\times\!10^{-2}$  & $3.480\!\times\!10^{-3}$  \\
		$8.8388\!\times\!10^{-2}$ & $2.451\!\times\!10^{-2}$  & $6.549\!\times\!10^{-2}$  & $1.318\!\times\!10^{-2}$  & $2.078\!\times\!10^{-3}$  \\
		\hline
	\end{tabular}
	\caption{Errors for velocity ($\bu_h$), pressure ($p_h$), and control ($\f_h$).} \label{tab:ex2-errors-pred}
\end{table}

\subsection{Example 3}
Here, we choose the desired vector $\bu_d$ and pressure $p_d$ as 
\begin{align*}
\boldsymbol{u}_d(x,y) =
\begin{pmatrix}
	-\cos(2\pi x)\,\sin(2\pi y) + \sin(2\pi y) \\[6pt]
	\sin(2\pi x)\,\cos(2\pi y) - \sin(2\pi x)
\end{pmatrix},
\
p_d(x,y) = 2\pi \big( \cos(2\pi y) - \cos(2\pi x) \big).
\end{align*}

In this case, we choose initial control as $\f_h^0 = (\sin(2\pi y), -\sin(2\pi x)).$ The stopping parameters are fixed as 
$\varepsilon_{\mathrm{hvi}} = 10^{-5}$ for the Uzawa-Newton inner solver 
and $\varepsilon_{\mathrm{opt}} = 10^{-5}$ for the outer iteration. The 
step-size sequence is again chosen as $\tau_k = 10^{-2}$. In the finest mesh, the optimal 
control $\f_h^*$, together with the associated optimal velocity $\bu_h^*$ 
and pressure $p_h^*$, is shown in Figure~\ref{fig:Ex2-plot-oVCP}. The 
history of the cost functional on the finest mesh is plotted in 
Figure~\ref{fig:Ex2-CostHist}. The relative errors, computed by using the 
solution on the finest mesh as the reference, are summarized in 
Table~\ref{tab:Ex3convergence_rates} and illustrated in Figure~\ref{fig:Ex2-errors}. 
Once again, the errors in the velocity, pressure, and control decrease 
consistently, thereby validating the convergence of the proposed method.

\begin{table}[h!]
	\centering
	\begin{tabular}{ccccc}
		\hline
		$h$ & $\|\boldsymbol{u}_h^*-\boldsymbol{u}_h\|_{\mathbb{L}^2}$ &
		$\|\boldsymbol{u}_h^*-\boldsymbol{u}_h\|_{\mathcal{V}}$ &
		$\|p_h^*-p_h\|_{L^2}$ &
		$\|\boldsymbol{f}_h^*-\boldsymbol{f}_h\|_{\mathbb{L}^2}$ \\
		\hline
		$3.5355{\times}10^{-1}$ & $6.488{\times}10^{-4}$ & $4.177{\times}10^{-3}$ & $2.425{\times}10^{-3}$ & $5.758{\times}10^{-3}$ \\
		$1.7678{\times}10^{-1}$ & $9.298{\times}10^{-5}$ & $8.942{\times}10^{-4}$ & $9.208{\times}10^{-4}$ & $1.373{\times}10^{-3}$ \\
		$1.1785{\times}10^{-1}$ & $2.853{\times}10^{-5}$ & $3.622{\times}10^{-4}$ & $5.015{\times}10^{-4}$ & $6.815{\times}10^{-4}$ \\
		$8.8388{\times}10^{-2}$ & $9.232{\times}10^{-6}$ & $1.380{\times}10^{-4}$ & $2.335{\times}10^{-4}$ & $4.692{\times}10^{-4}$ \\
		\hline
	\end{tabular}
	\caption{Errors for velocity in $\mathbb{L}^2$, velocity in $\mathcal{V}$, pressure in $L^2$, and control in $\mathbb{L}^2$.} \label{tab:Ex3convergence_rates}
\end{table}

\begin{figure}[ht]
	\centering
	\includegraphics[width=\textwidth]{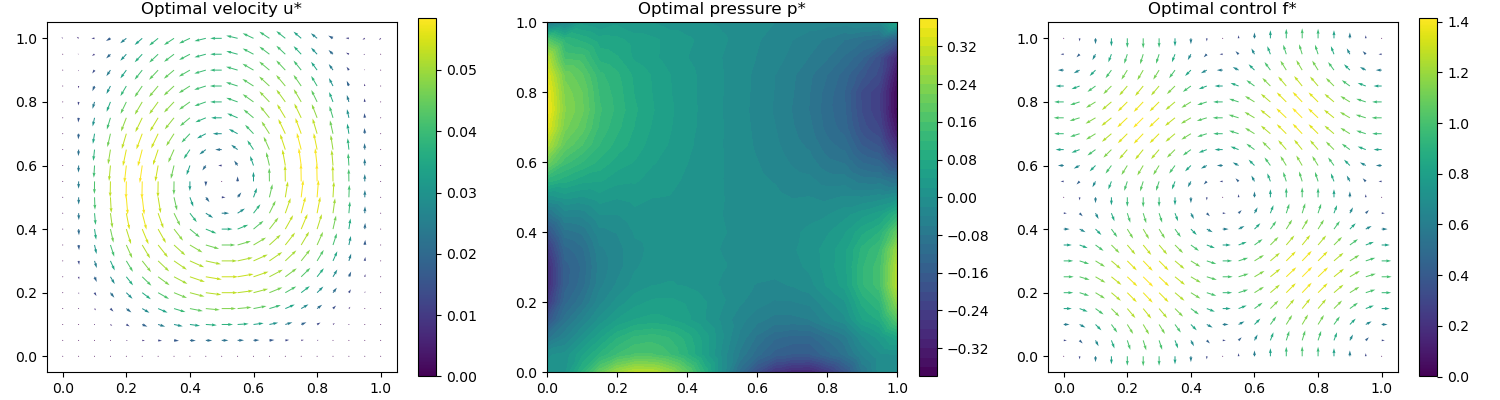}
	\caption{Plot of optimal velocity $\bu^*$, optimal pressure $p^*$, and optimal control $\f^*$ in mesh size $25\times 25$.}
	\label{fig:Ex3-plot-oVCP}
\end{figure}

\begin{figure}[ht]
	\centering
	\begin{subfigure}{0.48\textwidth}
		\centering
		\includegraphics[width=\linewidth]{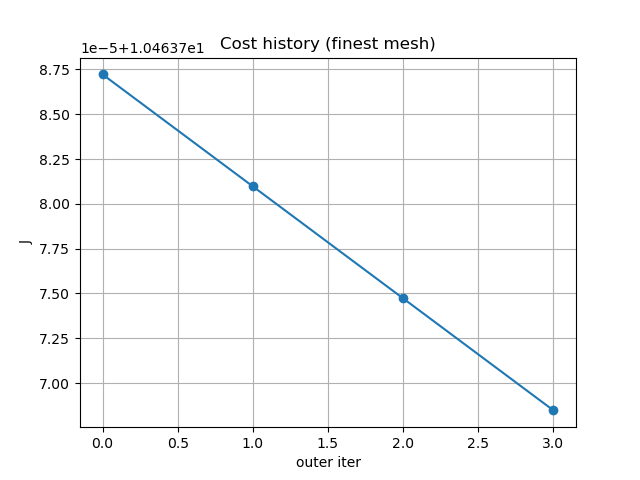}
		\caption{Cost history.}
		\label{fig:Ex3-CostHist}
	\end{subfigure}
	\hfill
	\begin{subfigure}{0.48\textwidth}
		\centering
		\includegraphics[width=\linewidth]{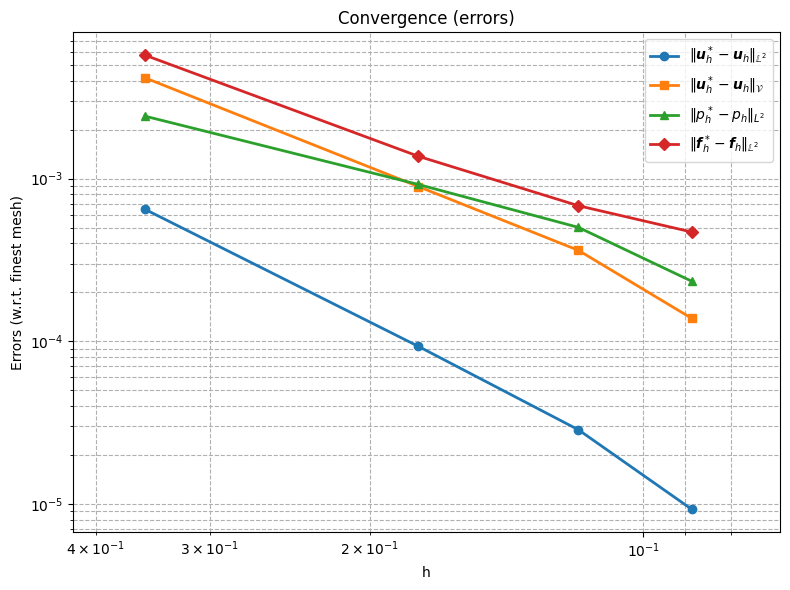}
		\caption{Pressure error vs. mesh size.}
		\label{fig:Ex3}
	\end{subfigure}
	\caption{Cost history and plots of errors.}
	\label{fig:Ex3-errors}
\end{figure}

\medskip
\noindent
\textbf{Acknowledgments:}  Support for M. T. Mohan's research received from the National Board of Higher Mathematics (NBHM), Department of Atomic Energy, Government of India (Project No. 02011/13/2025/NBHM(R.P)/R\&D II/1137). W. Akram is supported by NBHM (National Board of Higher Mathematics, Department of Atomic Energy) postdoctoral fellowship, No. 0204/16(1)(2)/2024/R\&D-II/10823.

	\end{document}